\documentclass[11pt]{article}
\usepackage{}

\usepackage[total={6in, 8.5in}]{geometry}
\usepackage{amsfonts}
\usepackage{amsthm}
\usepackage{amssymb}
\usepackage{amsmath}
\usepackage{caption}
\usepackage{enumerate}
\usepackage[
pdfauthor={},
pdftitle={The Core Conjecture of Hilton and Zhao I: Pseudo-multifan and Lollipop},
pdfstartview=XYZ,
bookmarks=true,
colorlinks=true,
linkcolor=blue,
urlcolor=blue,
citecolor=blue,
bookmarks=false,
linktocpage=true,
hyperindex=true
]{hyperref}

\makeatletter
\newcommand{\setword}[2]{%
	\phantomsection
	#1\def\@currentlabel{\unexpanded{#1}}\label{#2}%
}
\makeatother

\makeatletter
\renewcommand*\env@matrix[1][*\c@MaxMatrixCols c]{%
	\hskip -\arraycolsep
	\let\@ifnextchar\new@ifnextchar
	\array{#1}}
\makeatother

\usepackage{array}
\usepackage{graphicx}
\usepackage[natural]{xcolor}

\usepackage{pgf,tikz,pgfplots}

\usepackage{mathrsfs}

\usetikzlibrary{arrows}


\usepackage{xcolor}
\long\def\ignore#1{}

\let\oldi\ignore



 \oldi{
\newtheorem{THM}{\textbf{Theorem}}[section]
\newtheorem{THMs}{\textbf{Theorem}}[section]
\newtheorem{DEF}[THM]{\textbf{Definition}}[section]
\newtheorem{LEM}[THM]{\textbf{Lemma}}
\newtheorem{CON}[THM]{\textbf{Conjecture}}
\newtheorem{PROP}[THM]{\textbf{Proposition}}
\newtheorem{COR}[THM]{\textbf{Corollary}}
\newtheorem{CORs}{\textbf{Corollary}}[section]
\newtheorem{PRO}[THM]{\textbf{Problem}}

\newcommand{\pf}{\textbf{Proof}.\quad}

\newtheorem{FAC}{\textbf{Fact}}
\newtheorem{REM}{\textbf{Remark}}
\newtheorem{OPR}{\textbf{Operation}}
\newtheorem{CLA}{\textbf{Claim}}[section]
\setcounter{CLA}{1}
 }

\newtheorem{THM}{Theorem}[section]

\newtheorem{DEF}[THM]{Definition}
\newtheorem{LEM}[THM]{Lemma}
\newtheorem{CON}[THM]{Conjecture}

\newtheorem{COR}[THM]{Corollary}

\newtheorem{CLA}{Claim}[section]
\setcounter{CLA}{1}
\newcommand{\pf}{\textbf{Proof}.\quad}

\newtheorem*{THM2}{\textbf{Theorem 2.5}}
\newtheorem*{THM3}{\textbf{Theorem 2.6}}
\newtheorem*{THM4}{\textbf{Theorem 2.8}}
\newtheorem*{Ass}{\textbf{Assumption}}


\usepackage{thmtools}
\usepackage{thm-restate}


\linespread{1.15}


\newcommand{\CC}{\mathcal{C}}

\newcommand{\pbar}{\overline{\varphi}}

\begin{document}
\title{Pseudo-multifan and Lollipop}

\author{%
	Yan Cao\thanks{School of Mathematical Sciences, 
		Dalian University of Technology, Dalian,  Liaoning 116024, China.  \texttt{ycao@dlut.edu.cn}.}
	\quad Guantao Chen\thanks{Department of Mathematics and Statistics, 
		Georgia State University, Atlanta, GA 30302, USA.  \texttt{gchen@gsu.edu}.  }\\
	\quad 
	Guangming Jing\thanks{School of Mathematical and Data Sciences, 
		West Virginia University, Morgantown, WV 26506, USA.  \texttt{gujing@mail.wvu.edu}. }
	\quad 
	Songling Shan\thanks{Department of Mathematics and Statistics, 
		Auburn  Univeristy, Auburn, AL 36849, USA. 
		\texttt{szs0398@auburn.edu}.  }
} 

\date{\today}
\maketitle

 \begin{abstract}
 A simple graph  $G$ with maximum degree $\Delta$ is  \emph{overfull} if $|E(G)|>\Delta \lfloor |V(G)|/2\rfloor$. The \emph{core} of $G$, denoted $G_{\Delta}$, is the subgraph of $G$ induced by its vertices of degree $\Delta$.  Clearly, the chromatic index of $G$ equals  $\Delta+1$ if $G$ is overfull. 
Conversely, Hilton and Zhao in 1996 conjectured  that if $G$ is a simple connected graph with $\Delta\ge 3$ and  $\Delta(G_\Delta)\le 2$, then $\chi'(G)=\Delta+1$  implies that $G$ is overfull or $G=P^*$, where $P^*$ is obtained from the Petersen graph by deleting a vertex (Core Conjecture).   The goal of this paper is to  develop 
the concepts of ``pseudo-multifan'' and ``lollipop'' and study their properties in an edge colored graph. These concepts turn out 
to be   powerful tools in edge coloring graphs with a small core degree.

 \smallskip
 \noindent
\textbf{MSC (2010)}: Primary 05C15\\ \textbf{Keywords:} Overfull graph,   Multifan, Kierstead path, Pseudo-multifan, Lollipop.

 \end{abstract}


\section{Introduction}

For two integers $p$ and $q$, let $[p,q]=\{i\in \mathbb{Z}: p\le i\le q\}$.
Let $G$ be a simple graph with maximum degree $\Delta$. 
The \emph{core} of  $G$,  denoted $G_\Delta$, 
is the subgraph of $G$ induced by its vertices of degree $\Delta$. 
Let $k\ge 0$ be an integer. 
An  \emph{edge $k$-coloring} of $G$ is a mapping $\varphi$ from $E(G)$ to 
$[1,k]$, called \emph{colors}, such that  no two adjacent edges receive the same color with respect to $\varphi$.  
The  \emph{chromatic index}  $\chi'(G)$ of $G$ is the smallest  $k$ so that $G$ has an edge $k$-coloring.

In 1960's, Gupta~\cite{Gupta-67}  and, independently, Vizing~\cite{Vizing-2-classes}  proved
that for all graphs $G$,  $\Delta \le \chi'(G) \le \Delta+1$. 
This 
leads to a natural classification of simple  graphs. Following Fiorini and Wilson~\cite{fw},  a graph $G$ is of {\it class 1} if $\chi'(G) = \Delta$ and of \emph{class 2} if $\chi'(G) = \Delta+1$.  Holyer~\cite{Holyer} showed that it is NP-complete to determine whether an arbitrary graph is of class 1.  
Nevertheless, if $|E(G)|>\Delta \lfloor |V(G)|/2\rfloor$,  then we have to color $E(G)$ using exactly  $(\Delta+1)$ colors. Such graphs are  \emph{overfull}.
Thus the containment of an overfull subgraph of the same maximum degree is a sufficient condition for a graph to be class 2. The condition is not 
necessary, as  the  Petersen graph is class 2
but contains 
no $3$-overfull subgraph. By Seymour~\cite{seymour79}, it is polynomial-time  to determine whether $G$ contains 
an overfull subgraph of maximum degree $\Delta$. A fundamental question arising  here is that for what graphs 
this sufficient condition of overfull subgraph containment will also be necessary.

Hilton and Zhao~\cite{MR1395947} in 1996
proposed the following Core Conjecture. 
If true, it implies an easy approach to determine the 
chromatic index for connected graphs $G$ with $\Delta(G_\Delta)\le 2$:  just count the number of 
edges in $G$ if $G\ne P^*$, where $P^*$ is obtained from the Petersen graph by deleting one 
vertex.   

\begin{CON}[Core Conjecture]\label{Core Conjecture}
	Let $G$ be a  simple connected graph with maximum degree $\Delta\ge 3$ and $\Delta(G_\Delta)\le 2$. 
	Then $G$ is class 2 implies that $G$ is overfull or $G=P^*$. 
\end{CON}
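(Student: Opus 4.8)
The plan is to argue by contradiction. Let $G$ be a counterexample with $|E(G)|$ as small as possible and, subject to that, $|V(G)|$ as small as possible. The first step is to reduce to the case that $G$ is $\Delta$-critical, meaning $\chi'(G-e)=\Delta$ for every $e\in E(G)$. If this fails, choose $e=xy$ with $\chi'(G-e)=\Delta+1$ and let $H$ be a component of $G-e$ with $\chi'(H)=\Delta+1$; then $\Delta(H)=\Delta$ (since $\chi'(H)\le\Delta(H)+1$ and $\Delta(H)\le\Delta$), $H$ is connected and class $2$, and $H_\Delta$ is an induced subgraph of $G_\Delta$, so $\Delta(H_\Delta)\le 2$. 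As $|E(H)|<|E(G)|$, minimality forces $H$ to be overfull or $H=P^*$, and a short case analysis — using that $G$ is connected, so $G-e$ has at most two components, and comparing the orders and sizes of $H$, of the other component, and of $G$ — shows that $G$ is then itself overfull or equal to $P^*$, a contradiction. Hence $G$ is $\Delta$-critical; in particular $G$ is $2$-connected and Vizing's Adjacency Lemma is available.

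The second step records the coarse shape of $G$. Because $\Delta(G_\Delta)\le 2$, the core $G_\Delta$ is a disjoint union of paths and cycles. Vizing's Adjacency Lemma, together with the elementary multifan inequalities, gives that every vertex of degree $<\Delta$ has at least two neighbors of degree $\Delta$ and, more sharply, severely constrains the degrees of the neighbors of a degree-$\Delta$ vertex that sits at the end of a path component of $G_\Delta$ (and so has a single core-neighbor). The purpose of the pseudo-multifan and lollipop machinery developed in this paper is to promote this purely local information to information around an entire component $C$ of $G_\Delta$: given a $\Delta$-coloring of $G-e$, one grows a lollipop whose stick enters $C$ and whose ``candy'' winds around $C$, and the color-exchange properties of lollipops then force a dichotomy — either the coloring of $G-e$ can be extended to all of $G$ (impossible, since $G$ is class $2$), or the degrees and adjacencies along $C$ obey a rigid pattern.

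The third step is the case analysis driven by this dichotomy. Running the lollipop argument at the end of a path component of $G_\Delta$ should always yield the forbidden extension, so no component of $G_\Delta$ is a path; thus $G_\Delta$ is a disjoint union of cycles. If $\Delta\ge 4$, or if $\Delta=3$ and some core cycle has length $\neq 6$, the rigidity imposed by lollipops running around these cycles — combined with the degree restrictions on the non-core vertices and the identity $2|E(G)|=\sum_{v}\deg(v)$ — should force $|E(G)|>\Delta\lfloor |V(G)|/2\rfloor$, contradicting the assumption that $G$ is not overfull. The only surviving configuration is $\Delta=3$ with $G_\Delta$ a single $6$-cycle carrying a prescribed arrangement of degree-$2$ vertices; showing this configuration is unique up to isomorphism identifies it as $P^*$ and completes the proof.

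The step I expect to be the main obstacle is the third one: establishing that a path component of the core, or a core cycle of the ``wrong'' length, always admits the color exchange that extends a $\Delta$-coloring of $G-e$. This is exactly where the subtle ways a lollipop can close up on itself, revisit a vertex, or collide with colors already used in its stick must be controlled — the phenomena analyzed in this paper — and it is where the Petersen exception has to be recognized and pinned down rather than ruled out. By comparison, the criticality reduction and the final counting are routine once the adjacency data are in hand.
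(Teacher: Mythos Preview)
This statement is not proved in the paper. It is Conjecture~1.1 (the Core Conjecture), and the paper's stated goal is only to develop the pseudo-multifan and lollipop tools ``to prove the Core Conjecture in a subsequent paper'' (see the abstract and the last paragraph of the introduction). There is therefore no proof in the paper to compare your proposal against.

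On the substance of your proposal itself, two remarks.

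First, your Step~1 is both unnecessary and not quite correct as written. Lemma~\ref{biregular} (Hilton--Zhao, 1992) already gives that every HZ-graph is $\Delta$-critical and that $G_\Delta$ is $2$-regular, so criticality is free and there are no path components of the core to eliminate. Your own reduction has a gap: from ``the component $H$ of $G-e$ is overfull'' you cannot conclude ``$G$ is overfull'' by a short case analysis --- an overfull subgraph with the same maximum degree does not force the ambient graph to be overfull, and the $H=P^*$ case generates its own nontrivial case check on how the extra edge can be attached without raising $\Delta(G_\Delta)$.

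Second, your Step~3 is where the whole proof lives, and what you have written there is a plan, not an argument: each decisive claim is qualified with ``should.'' The assertion that a counting identity plus ``rigidity imposed by lollipops'' yields overfullness is much too optimistic; even the resolved cases $\Delta=3$ (Cariolaro--Cariolaro) and $\Delta=4$ (Cranston--Rabern) required substantial structural work, not a degree count, and the general case in the sequel is a long analysis of how Kempe chains thread through the core cycles using exactly the lemmas (Theorems~\ref{pseudo-fan-ele}, \ref{Lem:2-non-adj1*}, \ref{Lem:2-non-adj2}) proved here. Your outline is a fair sketch of the intended strategy, but it does not constitute a proof and does not pin down which of these lemmas is invoked at which juncture.
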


A connected class 2 graph $G$ with $\Delta(G_\Delta)\leq 2$ is a \emph{Hilton-Zhao graph (HZ-graph)}.  Clearly, $P^*$ is an HZ-graph  with $\chi'(P^*)=4$ and $\Delta(P^*)=3$. Hence the Core Conjecture is equivalent to the claim that every HZ-graph  $G\not=P^*$ with $\Delta(G)\geq 3$ is overfull. Not much progress has been made since the conjecture was proposed  in 1996. A first breakthrough was achieved in 2003, when Cariolaro and Cariolaro \cite{CariolaroC2003} settled the
base case $\Delta=3$. They proved that $P^*$ is the only HZ-graph with maximum degree $\Delta=3$, an alternative proof was given later by Kr\'al', Sereny, and Stiebitz (see \cite[pp. 67--63]{StiebSTF-Book}). The next case,  $\Delta=4$, was recently solved by Cranston and Rabern \cite{CranstonR2018hilton}, they proved that the only HZ-graph with maximum degree $\Delta=4$ is obtained from the graph $K_5$ with one edge removed. 
The conjecture is wide open for $\Delta \ge 5$.  Our main goal in this paper 
is to develop two new concepts, namely ``pseudo-multifan'' and ``lollipop'' that generalize  previously known 
adjacency lemmas associated with multifans and Kierstead paths. These developments were used
to prove the Core Conjecture~\cite{2108.04399}. Furthermore, we have applied 
these ideas in proving the overfullness of graphs in~\cite{CCS22} and~\cite{2208.04179}, making progress towards the Overfull Conjecture by  Chetwynd and Hilton from 1986~\cite{MR848854}. We believe that 
these concepts and related results will be useful tools in the area of edge colorings.

The remainder of the paper is organized as follows. In next section, we give the classical edge coloring concept of a multifan, and 
then  define  ``pseudo-multifans'' and ``lollipops.'' The main results are  listed as Theorem~\ref{pseudo-fan-ele}, Theorem~\ref{Lem:2-non-adj1*} and Theorem~\ref{Lem:2-non-adj2}.
In Section 3, we provide certain preliminaries and notation. In Section 4, we prove Theorem~\ref{pseudo-fan-ele}.  Theorem~\ref{Lem:2-non-adj1*} and Theorem~\ref{Lem:2-non-adj2}
will be proved in the last section.

\section{Multifan, pseudo-multifan, and lollipop}

We start with some  definitions. 
Let $G$ be a graph, $v\in V(G)$, and $i\ge 0$ be an integer.  
An \emph{$i$-vertex}  is a vertex of degree $i$
in $G$, and an $i$-vertex from the neighborhood of   $v$ is called an \emph{$i$-neighbor}  of $v$. 
 Define 
$$ V_i=\{w\in V(G)\,:\, d_G(w)=i\} , \quad \quad N_{i}(v)=N_G(v)\cap V_i, \quad \,\mbox{and} \quad N_i[v]=N_i(v)\cup \{v\}. $$
The symbol $\Delta$  is reserved for $\Delta(G)$, the maximum degree of $G$
throughout  this paper.

Let   $e\in E(G)$ and $\varphi\in \CC^k(G-e)$ for some $k\ge 0$, where $\CC^k(G)$ denotes the set of all edge $k$-colorings of $G$.  
The set of colors \emph{present} at $v$ is 
$\varphi(v)=\{\varphi(f)\,:\, \text{$f$ is incident to $v$}\}$, and the set of colors \emph{missing} at $v$ is $\pbar(v)=[1,k]\setminus\varphi(v)$.  If $\pbar(v)=\{\alpha\}$ is a singleton for some $\alpha\in [1,k]$, we  also write $\pbar(v)=\alpha$. 
For a vertex set $X\subseteq V(G)$,  define 
$
\pbar(X)=\bigcup _{x\in X} \pbar(x).
$
The set $X$ is  \emph{$\varphi$-elementary} if $\pbar(x)\cap \pbar(y)=\emptyset$
for any distinct  $x,y\in X$.    

Let $\alpha,\beta \in [1,k]$. Each component of $G-e$
induced on edges colored by $\alpha$ or $\beta$ is either a 
path or an even cycle,  which is called an \emph{$(\alpha,\beta)$-chain} of $G-e$
with respect to $\varphi$. 
Interchanging  $\alpha$ and $\beta$
on an $(\alpha,\beta)$-chain $C$ of $G$ gives a new edge $k$-coloring, which is denoted by 
$\varphi/C$. 
This operation  is called a \emph{Kempe change}. 

For $x,y\in V(G)$, if $x$ and $y$
are contained in the same  $(\alpha,\beta)$-chain with respect to $\varphi$, we say $x$ 
and $y$ are \emph{$(\alpha,\beta)$-linked}.
Otherwise, they are \emph{$(\alpha,\beta)$-unlinked}. 
If an $(\alpha,\beta)$-chain  $P$ is a path with one endvertex as $x$, we also denote it by $P_x(\alpha,\beta,\varphi)$, and we just write $P_x(\alpha,\beta)$ if $\varphi$ is understood.   For a vertex $u$ and an edge $uv$ contained in $P_x(\alpha,\beta,\varphi)$, 
we write 
{$\mathit {u\in P_x(\alpha,\beta, \varphi)}$} and  {$\mathit {uv\in P_x(\alpha,\beta, \varphi)}$}.  
If $u,v\in P_x(\alpha,\beta,\varphi)$ such that $u$ lies between $x$ and $v$ on $P$, 
then we say that $P_x(\alpha,\beta,\varphi)$ \emph{meets $u$ before $v$}.

Let   
$T$ be  an alternating sequence of vertices  and edges of  $G$. We denote by \emph{$V(T)$}  
the set of vertices  contained in  $T$, and by  
\emph{$E(T)$}  the set of edges contained in $T$. We simply write $\pbar(T)$ for $\pbar(V(T))$. 
If $V(T)$ is $\varphi$-elementary and $\pbar(T) \ne \emptyset$,
then for a color  $\tau\in \pbar(T)$,  we denote by  $\mathit{\pbar^{-1}_T(\tau)}$ the  unique vertex  in $V(T)$ at which $\tau$ 
is missed.  A coloring $\varphi'\in \CC^k(G-e)$
is  \emph{$(T,\varphi)$-stable} if for every  $x\in V(T)$ and every $f\in E(T)$, it holds that $\pbar'(x)=\pbar(x)$ and  $\varphi'(f)=\varphi(f)$.  Clearly, $\varphi$ is 
$(T,\varphi)$-stable.  

Let   $rs_1\in E(G)$ and $\varphi\in \CC^k(G-rs_1)$ for some $k\ge 0$. We now are ready to give the definitions of multifans and pseudo-multifans.

\begin{DEF}[Multifan]
A multifan centered at $r$ with respect to $rs_1$ and $\varphi$
is a sequence $F_\varphi(r,s_1:s_p):=(r, rs_1, s_1, rs_2, s_2, \ldots, rs_p, s_p)$ with $p\geq 1$ consisting of  distinct vertices and edges  such that  for every edge $rs_i$ with $i\in [2,p]$,  there is a vertex $s_j$ with $j\in [1,i-1]$ satisfying 
	$\varphi(rs_i)\in \pbar(s_j)$. 
\end{DEF}

A multifan $F_\varphi(r,s_1:s_p)$ is  \emph{maximum} at $r$ if $|V(F)|$ is maximum among all multifans at $r$. 

\begin{DEF}[Pseudo-multifan]
	A  pseudo-multifan  with respect to $rs_1$ and $\varphi$ is a sequence $S:=S_\varphi(r,s_1:s_t:s_p):=(r, rs_1, s_1, rs_2, s_2, \ldots,rs_t, s_t, rs_{t+1},  s_{t+1}, \ldots, s_{p-1},  rs_p, s_p)$ 
	with $t,p \ge 1$  consisting of distinct  vertices  and edges  satisfying the following conditions:
	\begin{enumerate}[(P1)]
		\item the subsequence $F:=(r, rs_1, s_1, rs_2, s_2, \ldots,rs_t, s_t)$ is a maximum multifan at $r$.
		\item $V(S)$ is $\varphi'$-elementary  for every $(F,\varphi)$-stable $\varphi'\in \CC^k(G-rs_1)$.
	\end{enumerate}
\end{DEF}

	Let $F_\varphi(r,s_1:s_p)$  be a multifan.  We call $s_{\ell_1},s_{\ell_2}, \ldots, s_{\ell_h}$, a subsequence of $s_2, \ldots, s_p$, an  \emph{$\alpha$-inducing sequence} for some $\alpha\in[1,k]$ with respect to $\varphi$ and $F$ if 
	$
	\varphi(rs_{\ell_1})= \alpha\in \pbar(s_1)$ and  $\varphi(rs_{\ell_i})\in \pbar(s_{\ell_{i-1}})$ for each  $i\in [2,h].
	$ (By this definition, $(r, rs_1, s_1, rs_{\ell_1}, s_{\ell_1}, \ldots, rs_{\ell_h}, s_{\ell_h})$ is also a multifan with respect to $rs_1$ and $\varphi$.)
	A  color in $\pbar(s_{\ell_i})$ for any $i\in[1,h]$ is an \emph{$\alpha$-inducing color} and is \emph{induced by} $\alpha$.   For $\alpha_i\in \pbar(s_{\ell_i})$
	and $\alpha_j\in \pbar(s_{\ell_j})$ with $i<j$ and $i,j\in [1,h]$, we write {$\mathit \alpha_i \prec \alpha_j$}.  For convenience, $\alpha$ itself is also an $\alpha$-inducing color and is induced by $\alpha$, and $\alpha\prec  \beta$
	for any $\beta \in \pbar(s_{\ell_i})$ and any $i\in [1,h]$. An $\alpha$-inducing color $\beta$ is called a \emph{last $\alpha$-inducing color} if  there does not exist any $\alpha$-inducing color $\delta$ such that $\beta \prec \delta$.
	
	 An edge
	$e\in E(G)$ is a \emph{critical edge}  if $\chi'(G-e)<\chi'(G)$, and  
	 $G$ is  {\it edge $\Delta$-critical} or simply  \emph{$\Delta$-critical}  if $G$ is connected,  $\chi'(G)=\Delta+1$,  and every edge of $G$ is critical. 
  The following result by Hilton and Zhao in~\cite{MR1172373} indicates that $V(G)=V_\Delta\cup V_{\Delta-1}$ for any HZ-graph $G$ with $\Delta\ge 3$.  
	
	\begin{LEM}\label{biregular}
		If $G$ is  an HZ-graph with maximum degree $\Delta$, then  the following statements hold.
		\begin{enumerate}[(a)]
			\item $G$ is $\Delta$-critical and $G_\Delta$ is 2-regular. 
			\item $\delta(G)=\Delta-1$, or $\Delta=2$ and $G$ is an odd cycle. 
			\item Every vertex of $G$ has at least two neighbors in $G_\Delta$. 
		\end{enumerate}
	\end{LEM}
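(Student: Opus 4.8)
The plan is to reduce everything to Vizing's Adjacency Lemma (VAL): in a $\Delta$-critical graph, for every edge $xy$ the endpoint $x$ has at least $\Delta-d(y)+1$ neighbors of degree $\Delta$. I would first prove that $G$ is $\Delta$-critical, and then obtain (a)--(c) from VAL together with the hypothesis $\Delta(G_\Delta)\le 2$, whose sole function is to cap at two the number of $\Delta$-neighbors of any $\Delta$-vertex. Granting that $G$ is $\Delta$-critical, and assuming $\Delta\ge 3$ (for $\Delta=2$ a connected class $2$ graph is an odd cycle and (a)--(c) are immediate), statement (c) comes first: if a vertex $u$ had at most one neighbor of degree $\Delta$, then VAL applied to each edge at $u$ would force every neighbor of $u$ to have degree $\Delta$, so $u$ would have $d_G(u)\ge\delta(G)\ge 2$ of them, a contradiction. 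Statement (a) is then immediate, since a $\Delta$-vertex has at least two $\Delta$-neighbors by (c) and at most two by hypothesis, so $G_\Delta$ is $2$-regular. For (b): if $\delta(G)=\Delta$ then $G$ is $\Delta$-regular with $G_\Delta=G$, forcing $\Delta\le 2$; and if some vertex $v$ had $d_G(v)\le\Delta-2$, then a $\Delta$-neighbor $w$ of $v$ (which exists by (c)) would, by VAL applied to $wv$, have at least $\Delta-d_G(v)+1\ge 3$ neighbors of degree $\Delta$, contradicting that $G_\Delta$ is $2$-regular; hence $\delta(G)=\Delta-1$.

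The substantive part is to prove $G$ is $\Delta$-critical. Since $G$ is class $2$, the standard procedure of deleting edges one at a time while the chromatic index stays $\Delta+1$, then passing to a class $2$ component, produces a connected $\Delta$-critical subgraph $H\subseteq G$. Because $V(H_\Delta)\subseteq V(G_\Delta)$ and $H_\Delta$ is a subgraph of $G_\Delta$, we have $\Delta(H_\Delta)\le 2$, so $H$ is itself an HZ-graph, and the previous paragraph (applied to $H$, which is already known to be critical) shows $H_\Delta$ is $2$-regular and that every vertex of $H$ has $H$-degree $\Delta-1$ or $\Delta$. I would then suppose $H\ne G$ and derive a contradiction. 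By connectedness there is an edge $vw\in E(G)\setminus E(H)$ with $v\in V(H)$, so $d_G(v)\ge d_H(v)+1$; together with $d_H(v)\in\{\Delta-1,\Delta\}$ and $d_G(v)\le\Delta$ this forces $d_H(v)=\Delta-1$ and $d_G(v)=\Delta$. Using (c) for $H$, pick a neighbor $a$ of $v$ in $H$ with $d_H(a)=\Delta$; then $d_G(a)=\Delta$ as well, so $a$ has no edge outside $H$, and the two $H$-neighbors of $a$ of degree $\Delta$ (exactly two, as $H_\Delta$ is $2$-regular) are still $G$-neighbors of $a$ of degree $\Delta$. These two, together with $v$ -- which has $G$-degree $\Delta$ but $H$-degree $\Delta-1$, hence differs from both -- are three neighbors of $a$ lying in $G_\Delta$, contradicting $\Delta(G_\Delta)\le 2$. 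Therefore $H=G$, so $G$ is $\Delta$-critical, and (a)--(c) follow.

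I expect this last step -- ruling out that $G$ strictly contains a $\Delta$-critical subgraph -- to be the main obstacle. Its crux is the observation that an edge of $G$ missing from the critical subgraph $H$ must hang off a vertex of $H$-degree $\Delta-1$, which thereby enters the core of $G$ and pushes the core-degree of one of its neighbors on the $2$-regular core $H_\Delta$ up to three; the rigidity ``$\delta(H)=\Delta-1$ and $H_\Delta$ is $2$-regular'' is exactly what makes this collide with $\Delta(G_\Delta)\le 2$. The only other things needing care are the degenerate cases ($\Delta=2$, or $G$ being $\Delta$-regular) and the routine check that $\Delta(G_\Delta)\le 2$ is inherited by $H$.
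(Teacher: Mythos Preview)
Your proof is correct. The paper does not supply its own argument for this lemma; it is quoted as a result of Hilton and Zhao with a reference, so there is no in-paper proof to compare against. Your route---extract a $\Delta$-critical subgraph $H$, apply Vizing's Adjacency Lemma to $H$ to get $\delta(H)=\Delta-1$ and $H_\Delta$ $2$-regular, then use the rigidity of $H_\Delta$ against $\Delta(G_\Delta)\le 2$ to force $H=G$---is exactly the standard approach (and essentially what Hilton and Zhao do in the cited paper). The only implicit fact you invoke without stating is $\delta(G)\ge 2$ for a $\Delta$-critical graph with $\Delta\ge 3$, needed in your derivation of~(c); this is immediate from the degree-sum bound $d(x)+d(y)\ge\Delta+2$ that accompanies VAL, so no real gap.
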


By Lemma~\ref{biregular} (a), every edge of an HZ graph is critical. 
For an HZ-graph $G$ with maximum degree $\Delta\ge 3$, we let $rs_1\in E(G)$ with $r\in V_\Delta$ and $s_1\in N_{\Delta-1}(r):=\{s_1,s_2,\dots, s_{\Delta-2}\}$, and $\varphi\in \CC^\Delta(G-rs_1)$. Then we call $(G,rs_1,\varphi)$ a \emph{coloring-triple}. 
As $\Delta$-degree vertices in a multifan do not miss any color, 
for  multifans in  HZ-graphs, we add a further requirement in its definition as follows 
and we use this new definition  in HZ-graphs in the remainder.
\begin{Ass}
For multifans in  HZ-graphs,	all of its vertices except the center  have degree $\Delta-1$.   
\end{Ass}

Let $(G,rs_1,\varphi)$ be a coloring-triple and $F:=F_\varphi(r,s_1:s_p)$ be a multifan. By its definition,  $|\pbar(s_1)|=2$, $|\pbar(s_i)|=1$ for each $i\in [2,p]$, and  so every color in $\pbar(F)\setminus \pbar(r)$ is induced by one of the two colors in $\pbar(s_1)$. 
We call $F$ a \emph{typical multifan}, denoted $F_\varphi(r, s_1:s_\alpha:s_\beta):=(r, rs_1, s_1, rs_2, s_2, \ldots,rs_\alpha, s_\alpha, rs_{\alpha+1}, s_{\alpha+1}, \ldots, rs_\beta, s_\beta)$ where $\beta:=p$, 
\begin{itemize}
	\item  if  $\pbar(r)=1$ (recall we denote $\pbar(v)$ by a number if $|\pbar(v)|=1$) and $\pbar(s_1)=\{2,\Delta\}$;  and 
	\item  if $|V(F)|\ge 3$, then $\varphi(rs_{\alpha+1})=\Delta$ and $\pbar(s_{\alpha+1})=\alpha+2$ (if $\beta>\alpha$), and 
	for each $i\in [2,\beta]$ with $i\ne \alpha+1$, $\varphi(rs_i)=i$  and $\pbar(s_i)=i+1$.	
\end{itemize}
It is clear that $ s_2, \ldots, s_\alpha$ is the longest 
$2$-inducing sequence  and $s_{\alpha+1}, \ldots, s_\beta$ (if $\beta>\alpha$) is the longest 
$\Delta$-inducing sequence of $F_\varphi(r, s_1:s_\alpha:s_\beta)$. 
By relabelling   vertices and colors if necessary, any multifan in an HZ-graph can be assumed to be a typical multifan, see Figure~\ref{f1} (a) for a depiction.  If $\alpha=\beta$, then we write $F_\varphi(r,s_1:s_\alpha)$ for $F_\varphi(r, s_1:s_\alpha:s_\beta)$, and call it  a {\it typical 2-inducing multifan}.  

If $F=(a_1,\ldots, a_t )$ is a sequence, then for a new 
entry $b$, 
$(F, b)$  denotes the sequence $(a_1,\ldots, a_t, b)$.

\begin{DEF}[Lollipop]
Let $(G,rs_1,\varphi)$ be a coloring-triple. A {lollipop}
centered at $r$  is a sequence 
$L=(F, ru, u,ux, x)$ of distinct vertices and edges such that $F=F_\varphi(r,s_1:s_\alpha:s_\beta)$ is a typical multifan, $u\in N_\Delta(r)$ and $x\in N_{\Delta-1}(u)$ 
with $x\not\in\{s_1,\ldots, s_\beta\}$ (see Figure~\ref{f1} (b) for a depiction).
\end{DEF}

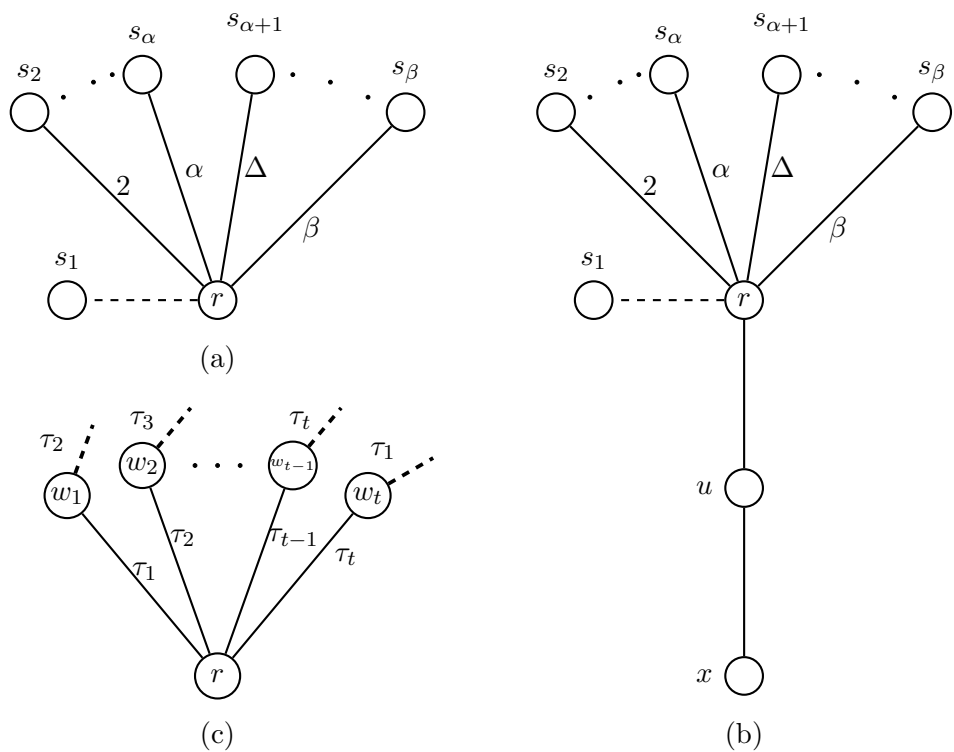
\begin{figure}[!htb]
	\begin{center}
		\begin{tikzpicture}[scale=1]
		
		{\tikzstyle{every node}=[draw ,circle,fill=white, minimum size=0.5cm,
			inner sep=0pt]
			\draw[black,thick](-1,0) node[label={below: }] (r)  {$r$};
			\draw[black,thick](-3,0) node[label={above: $s_1$}] (s1) {};
			\draw[black,thick](-3.5,2.5) node[label={above: $s_2$}] (s2) {};
			\draw[black,thick](-2,3) node[label={above: $s_\alpha$}] (sa) {};
			\draw[black,thick](-0.5,3) node[label={above: $s_{\alpha+1}$}] (sa1) {};
			\draw[black,thick](1.5,2.5) node[label={above: $s_\beta$}] (sb) {};
			
		}
		\path[draw,thick,black, dashed]
		(r) edge node[name=la,above,pos=0.5] {\color{black}} (s1);
		
		\path[draw,thick,black]
		(r) edge node[name=la,above,pos=0.5] {\color{black}$2$} (s2)
		(r) edge node[name=la,above,pos=0.5] {\color{black}\quad$\alpha$} (sa)
		(r) edge node[name=la,above,pos=0.5] {\color{black}\quad\,\,$\Delta$} (sa1)
			(r) edge node[name=la,below,pos=0.5] {\color{black}$\beta$} (sb);

	{\tikzstyle{every node}=[draw ,circle,fill=black, minimum size=0.05cm,
		inner sep=0pt]
		\draw(-3.05,2.7) node (f1)  {};
		\draw(-2.65,2.9) node (f1)  {};
		\draw(-2.4,3) node (f1)  {};
			\draw(0,3) node (f1)  {};
		\draw(0.5,2.9) node (f1)  {};
		\draw(1,2.7) node (f1)  {};
			} 
	\draw(-1,-0.8) node (f1)  {(a)};
	\begin{scope}[shift={(7,0)}]
	{\tikzstyle{every node}=[draw ,circle,fill=white, minimum size=0.5cm,
		inner sep=0pt]
		\draw[black,thick](-1,0) node[label={left: }] (r)  {$r$};
		\draw[black,thick](-3,0) node[label={above: $s_1$}] (s1) {};
		\draw[black,thick](-3.5,2.5) node[label={above: $s_2$}] (s2) {};
		\draw[black,thick](-2,3) node[label={above: $s_\alpha$}] (sa) {};
		\draw[black,thick](-0.5,3) node[label={above: $s_{\alpha+1}$}] (sa1) {};
		\draw[black,thick](1.5,2.5) node[label={above: $s_\beta$}] (sb) {};
			\draw[black,thick](-1,-2.5) node[label={left: $u$}] (u)  {};
				\draw[black,thick](-1,-5) node[label={left: $x$}] (x)  {};
	}
	\path[draw,thick,black, dashed]
	(r) edge node[name=la,above,pos=0.5] {\color{black}} (s1);
	
	\path[draw,thick,black]
	(r) edge node[name=la,above,pos=0.5] {\color{black}$2$} (s2)
	(r) edge node[name=la,above,pos=0.5] {\color{black}\quad$\alpha$} (sa)
	(r) edge node[name=la,above,pos=0.5] {\color{black}\quad\,\,$\Delta$} (sa1)
	(r) edge node[name=la,below,pos=0.5] {\color{black}$\beta$} (sb)
	(r) edge node[name=la,below,pos=0.5] {\color{black}} (u)
	(u) edge node[name=la,below,pos=0.5] {\color{black}} (x);
	
		{\tikzstyle{every node}=[draw ,circle,fill=black, minimum size=0.05cm,
		inner sep=0pt]
		\draw(-3.05,2.7) node (f1)  {};
		\draw(-2.65,2.9) node (f1)  {};
		\draw(-2.4,3) node (f1)  {};
		\draw(0,3) node (f1)  {};
		\draw(0.5,2.9) node (f1)  {};
		\draw(1,2.7) node (f1)  {};
	} 
	
	\draw(-1,-5.8) node (f1)  {(b)};
	
	\end{scope}	
	
	\begin{scope}[shift={(-1,-2)}]
	{\tikzstyle{every node}=[draw ,circle,fill=white, minimum size=0.6cm,
		inner sep=0pt]
		\draw[black,thick] (0, -3) node (r)  {$r$};
		\draw[black,thick] (-2, 0.4-1) node (sa)  {$w_1$};
		\draw [black,thick](-1, 0.8-1) node (sa2)  {$w_2$};
		\draw [black,thick](1, 0.8-1) node (sb)  {\tiny$w_{t-1}$};
		\draw [black,thick](2, 0.4-1) node (sb2)  {$w_t$};
	}
	\path[draw,thick,black]
	(r) edge node[name=la,pos=0.6] {\color{black}\quad$\tau_1$} (sa)
	(r) edge node[name=la,pos=0.7] {\color{black}\quad$\tau_2$} (sa2)
	(r) edge node[name=la,pos=0.7] {\color{black}\quad\,\,\,\,\,$\tau_{t-1}$} (sb)
	(r) edge node[name=la,pos=0.7] {\color{black}\qquad$\tau_t$} (sb2);

	\draw[dashed, black, line width=0.5mm] (sa)--++(70:1cm); 
	\draw[dashed, black, line width=0.5mm] (sa2)--++(50:1cm); 
	\draw[dashed, black, line width=0.5mm] (sb)--++(50:1cm); 
	\draw[dashed, black, line width=0.5mm] (sb2)--++(30:1cm);

	\draw[black] (-2.2, 1.1-1) node {$\tau_2$};  
	\draw[black] (-1.0, 1.4-1) node {$\tau_3$};  
	\draw[black] (1.1, 1.4-1) node {$\tau_{t}$}; 
	\draw[black] (2.2, 1.0-1) node {$\tau_{1}$}; 
	
	{\tikzstyle{every node}=[draw ,circle,fill=black, minimum size=0.05cm,
		inner sep=0pt]
		
		\draw(-0.3,0.8-1) node (f1)  {};
		\draw(0,0.8-1) node (f1)  {};
		\draw(0.3,0.8-1) node (f1)  {};

	} 
	
	\draw(0,-3.8) node (f1)  {(c)};
	\end{scope}
		\end{tikzpicture}
	\end{center}
	\caption{(a) A typical multifan $F_\varphi(r, s_1:s_\alpha:s_\beta)$, where $\pbar(r)=1$ and $\pbar(s_1)=\{2,\Delta\}$;   (b) A lollipop centered at $r$, where $x$ can be the same as some $s_\ell$ for $\ell\in [\beta+1, \Delta-2]$; (c)  A rotation centered at $r$, where a dashed line at a vertex indicates a color missing at the vertex.}
		\label{f1}
		\vspace{-0.2cm}
\end{figure}

Let $(G,rs_1,\varphi)$ be a coloring-triple.
A sequence of distinct vertices 
$w_1,  \ldots, w_t \in N_{\Delta-1}(r)$  form a \emph{rotation}  if 
 $\{w_1,\ldots, w_t\}$ is $\varphi$-elementary, and 
 for each $\ell$ with $\ell\in [1,t]$, it holds that  $\varphi(rw_\ell)=\pbar(w_{\ell-1})$, where $w_0:=w_t$. 
An example of a rotation is given in Figure~\ref{f1} (c).

For $u,v\in V(G)$, we write $u\sim v$ if $u$ and $v$ are adjacent in $G$, and write $u\not\sim v$ otherwise. The main results of this paper are the following.

\begin{THM}\label{pseudo-fan-ele}
	Let  $(G,rs_1,\varphi)$ be a coloring-triple, $S:=S_\varphi(r, s_1: s_t: s_{\Delta-2})$  be a pseudo-multifan 
	 with $F:=F_\varphi(r,s_1:s_t)$ being  the maximum multifan contained in it. Let	$j\in [t+1,\Delta-2]$ and $\delta \in \pbar(s_j)$.   Then 
	\begin{enumerate}[(a)]
		\item $\{s_{t+1}, \ldots, s_{\Delta-2}\}$ can be partitioned 
		into rotations with respect to $\varphi$. \label{pseudo-a}

		\item $s_j$ and $r$ are $(1,\delta)$-linked with respect to  $\varphi$ . \label{pseudo-a1}
		\item For every color $\gamma\in \pbar(F)$ with $\gamma \ne 1$,  it holds that $r\in P_{y}(\gamma,\delta)=P_{s_j}(\gamma,\delta)$, where $y=\mathit{\pbar_{F}^{-1}(\gamma)}$.  
		Furthermore, for $z\in N_G(r)$ such that $\varphi(rz)=\gamma$,  
		$P_{y}(\gamma,\delta)$	meets $z$ before  $r$.  \label{pseudo-b}
		\item For every $\delta^*\in \pbar(S)\setminus \pbar(F)$ with $\delta^*\ne \delta$, it holds that $P_{y}(\delta,\delta^*)=P_{s_{j}}(\delta,\delta^*)$,  where $y=\mathit{\pbar_{S}^{-1}(\delta^*)}$. Furthermore, either $r\in P_{s_j}(\delta,\delta^*)$ or $P_r(\delta, \delta^*)$ is an even cycle. 
		\label{pseudo-c}
		\end{enumerate}
\end{THM}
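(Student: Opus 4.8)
The plan is to exploit the defining property (P2) of a pseudo-multifan — namely that $V(S)$ is $\varphi'$-elementary for every $(F,\varphi)$-stable coloring $\varphi'$ — together with the standard edge-coloring machinery for critical graphs. I would proceed one part at a time, since each later part reuses the Kempe-chain analysis set up in the earlier ones.

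For part \eqref{pseudo-a}: the vertices $s_{t+1},\ldots,s_{\Delta-2}$ each have degree $\Delta-1$, so each misses exactly one color, and by (P2) these missing colors are all distinct and also disjoint from $\pbar(F)$. The key observation is that for $j\in[t+1,\Delta-2]$ the edge $rs_j$ has a color $\varphi(rs_j)$ which, since $F$ is a \emph{maximum} multifan, cannot be missing at any vertex of $F$ (otherwise $s_j$ would extend $F$ inside the multifan structure, contradicting maximality or forcing $s_j\in V(F)$). So $\varphi(rs_j)$ lies in $\pbar(\{s_{t+1},\ldots,s_{\Delta-2}\})$; this gives a function from this index set to itself sending the vertex incident to the edge colored $c$ to the vertex missing $c$. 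Because $r$ has degree $\Delta$ and misses color $1$, and the colors $\varphi(rs_j)$ are all distinct, this function is a bijection, hence a permutation, and its cycle decomposition is exactly the desired partition into rotations. The main work here is carefully justifying that $\varphi(rs_j)\notin\pbar(F)$ — this should follow by a swap argument: if $\varphi(rs_j)=\mu\in\pbar(s_\ell)$ for some $s_\ell\in V(F)$, one builds a larger multifan or relocates $s_j$, contradicting maximality of $F$ or the distinctness forced by (P2).

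For parts \eqref{pseudo-a1} and \eqref{pseudo-b}: suppose $s_j$ and $r$ were $(1,\delta)$-unlinked, where $\delta\in\pbar(s_j)$ and $1\in\pbar(r)$. Then performing the Kempe change on $P_{s_j}(1,\delta)$ yields a coloring $\varphi'$; since this chain avoids $r$ and avoids all of $V(F)$ (as $\delta\notin\pbar(F)$ by (P2) and $1$ appears in $F$ only at $r$), $\varphi'$ is $(F,\varphi)$-stable. But now $s_j$ misses both $1$ and $\delta$ under $\varphi'$, so one can attach $s_j$ to $F$ via the edge $rs_j$ and repeatedly absorb further vertices, contradicting the maximality of $F$ (here one uses the standard fact that a vertex missing a color already missing at the center of a maximum multifan leads to a contradiction in a $\Delta$-critical graph). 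This proves \eqref{pseudo-a1}. Part \eqref{pseudo-b} is a refinement: given $\gamma\in\pbar(F)$, $\gamma\ne 1$, let $y=\pbar_F^{-1}(\gamma)$; I would run a $(\gamma,\delta)$-Kempe analysis. If $r\notin P_y(\gamma,\delta)$, swapping on this chain is $(F,\varphi)$-stable and moves $\gamma$ into $\pbar(s_j)$ alongside $\delta$ — but then $s_j$ misses two colors missing at two vertices of $F$, again contradicting maximality by a multifan-extension argument. The equality $P_y(\gamma,\delta)=P_{s_j}(\gamma,\delta)$ then follows because both $y$ and $s_j$ are endpoints of $(\gamma,\delta)$-chains and $r$ (an internal vertex of degree $\Delta$, missing neither $\gamma$ nor $\delta$) cannot be an endpoint; tracing the chain from $y$ through $r$ forces it to terminate at $s_j$. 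The "meets $z$ before $r$" clause is immediate once $z$ is the neighbor of $r$ with $\varphi(rz)=\gamma$: the edge $zr$ is the last edge of the chain before reaching $r$ from the $y$-side, since $r$ misses $\gamma$.

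For part \eqref{pseudo-c}: now both $\delta$ and $\delta^*$ lie in $\pbar(S)\setminus\pbar(F)$, missed at $s_j$ and at $y:=\pbar_S^{-1}(\delta^*)$ respectively, and by (P2) these are genuinely distinct vertices, both of degree $\Delta-1$. Consider $P_{s_j}(\delta,\delta^*)$. If it did not end at $y$, then it misses $y$, and swapping colors on it would be $(F,\varphi)$-stable (as $\delta,\delta^*\notin\pbar(F)$) while changing the missing-color pattern on $V(S)$ in a way that violates the elementariness guaranteed by (P2) — concretely, after the swap some vertex of $S$ would miss a color also missed elsewhere in $S$. Hence $P_{s_j}(\delta,\delta^*)=P_y(\delta,\delta^*)$. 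For the final dichotomy, consider $P_r(\delta,\delta^*)$: since $r$ misses neither $\delta$ nor $\delta^*$, the component of the $(\delta,\delta^*)$-subgraph containing $r$ is either a path with $r$ as an interior vertex — impossible, a path has its interior vertices of full degree in the two colors, and $r$ does have both colors present — wait, more carefully: $r$ has degree $\Delta$ and misses only $1$, so both $\delta$ and $\delta^*$ are present at $r$, meaning $r$ is an interior vertex of its $(\delta,\delta^*)$-chain, so that chain is either the path $P_{s_j}(\delta,\delta^*)$ itself (if $s_j$ or $y$ lies on it, i.e.\ $r\in P_{s_j}(\delta,\delta^*)$) or a cycle $P_r(\delta,\delta^*)$ disjoint from $\{s_j,y\}$. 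This is exactly the stated alternative.

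I expect the main obstacle to be part \eqref{pseudo-b}: showing that the relevant Kempe swaps are genuinely $(F,\varphi)$-stable and then extracting a contradiction with the \emph{maximality} of $F$ rather than merely with elementariness requires the precise multifan-extension lemmas (the "$\alpha$-inducing sequence" and "last $\alpha$-inducing color" apparatus introduced just before the theorem), and one must be careful that relocating or appending $s_j$ does not accidentally reuse an edge or vertex already in $F$. Parts \eqref{pseudo-a}, \eqref{pseudo-a1}, and \eqref{pseudo-c} are comparatively routine once the stability observations are in place.
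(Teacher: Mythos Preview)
Your sketches for parts \eqref{pseudo-a}, \eqref{pseudo-a1}, and the first half of \eqref{pseudo-c} are essentially the paper's arguments. The genuine gap is in part \eqref{pseudo-b}. You assert that if $r\notin P_y(\gamma,\delta)$ then swapping on this chain is $(F,\varphi)$-stable, but it is not: $y\in V(F)$ and the swap replaces $\gamma$ by $\delta$ in $\pbar(y)$, so the stability condition $\pbar'(y)=\pbar(y)$ fails. Nor can you swap on the $(\gamma,\delta)$-chain through $r$ and claim stability, since that chain contains the edge $rz\in E(F)$ whenever $\gamma$ is the color of a multifan edge. The paper's proof instead separates three cases according to which of $P_y,P_{s_j}$ contain $r$. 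The most delicate is when $r$ lies on neither: one swaps on the chain through $r$ (deliberately breaking $F$), and then observes that the recolored edges satisfy $\varphi'(rs_{j+1})=\gamma\in\pbar'(s_{\gamma-1})$ and $\varphi'(rs_{\gamma})=\delta\in\pbar'(s_j)$, so the entire rotation $s_{j+1},\ldots,s_\ell,s_j$ can be threaded into the multifan at position $\gamma-1$, producing a strictly larger multifan $F^*$. This contradicts the maximality condition (P1), not (P2); your sketch supplies neither this construction nor any substitute.

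Two further errors: your ``meets $z$ before $r$'' argument relies on ``$r$ misses $\gamma$'', which is false ($r$ misses only $1$); no parity consideration determines which of the two edges at $r$ lies on the $y$-side. The paper instead shifts around the rotation $s_j,\ldots,s_\ell$ to detach the $\delta$-edge $rs_{j+1}$ from $r$; if $P_y$ had met $r$ before $z$, this shift would remove $r$ from $P_y$ altogether, contradicting the already-proved first part of \eqref{pseudo-b}. And in the second half of \eqref{pseudo-c}, your claim that the $(\delta,\delta^*)$-component through $r$ must be either $P_{s_j}$ or a cycle is unjustified: its endpoints could lie outside $V(S)$, so it could be a distinct path. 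The paper again uses shifts on the two rotations (containing $s_j$ and $s_{j^*}$) to isolate one branch of this path, then a Kempe change to force two vertices of $S$ to share a missing color, violating (P2).
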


\begin{THM}\label{Lem:2-non-adj1*}
	Let  $(G,rs_1,\varphi)$ be a coloring-triple, 
	$F:=F_\varphi(r,s_1:s_\alpha:s_\beta)$ be a typical multifan, and  $L:=(F,ru,u,ux,x)$ be a lollipop centered at $r$.  If $\varphi(ru)=\alpha+1$, $\pbar(x)=\alpha+1$, 
	and $\varphi(ux)=\Delta$, 
	then  the following two statements hold.
	\begin{enumerate}[(1)]
		\item If $u\sim s_1$, then $\varphi(us_1)$ is a $\Delta$-inducing color of $F$. 
		\item If $u\sim s_\alpha$, then $\varphi(us_\alpha)$ is a $\Delta$-inducing color of $F$.
	\end{enumerate}
\end{THM}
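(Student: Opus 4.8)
I would prove both statements by contradiction, and describe the plan for statement (1); statement (2) is entirely parallel, with $s_\alpha$ playing the role of $s_1$ (there one additional color, namely $2$, must also be excluded because $2\notin\pbar(s_\alpha)$, but the method is the same). So assume $u\sim s_1$, put $\gamma:=\varphi(us_1)$, and suppose $\gamma$ is not a $\Delta$-inducing color of $F$. Since $G$ is $\Delta$-critical (Lemma~\ref{biregular}), the edge $rs_1$ is critical, and I would use the standard properties of a multifan relative to a critical edge: $V(F)$ is $\varphi$-elementary, and for every $s_i\in V(F)\setminus\{r\}$ and every $\delta\in\pbar(s_i)$ the vertices $r$ and $s_i$ are $(1,\delta)$-linked, i.e.\ $P_r(1,\delta)=P_{s_i}(1,\delta)$ (otherwise interchanging colors on $P_{s_i}(1,\delta)$ and shifting the fan yields a $\Delta$-edge-coloring of $G$). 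Two consequences form the backbone of the argument: applying this with $\delta=\alpha+1\in\pbar(s_\alpha)$ and using $\varphi(ru)=\alpha+1$, the chain $P_r(1,\alpha+1)=P_{s_\alpha}(1,\alpha+1)$ begins $(r,ru,u,\ldots)$ and ends at $s_\alpha$; and since $\varphi(ux)=\Delta$, $\varphi(ru)=\alpha+1$ and $\pbar(x)=\alpha+1$, the chain $P_x(\alpha+1,\Delta)$ begins $(x,ux,u,ru,r,\ldots)$ and hence contains both $u$ and $r$.

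\textbf{Narrowing the color.} Since $us_1$ is incident with $u$ and with $s_1$, and $\pbar(s_1)=\{2,\Delta\}$, $\varphi(ru)=\alpha+1$, $\varphi(ux)=\Delta$, we have $\gamma\notin\{2,\Delta,\alpha+1\}$. For a typical multifan, $\pbar(F)\setminus\pbar(r)=\{2,3,\ldots,\beta+1,\Delta\}$, whose $\Delta$-inducing members are exactly $\{\Delta\}\cup\{\alpha+2,\ldots,\beta+1\}$, while every color outside $\pbar(F)$ is present at every vertex of $F$. Hence, if $\gamma$ is not $\Delta$-inducing, then either (i) $\gamma=1$, or (ii) $\gamma$ is a $2$-inducing color, i.e.\ $\gamma\in\{3,\ldots,\alpha\}$, or (iii) $\gamma\notin\pbar(F)$; I would rule out these three cases in turn.

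\textbf{The contradictions.} In each case the aim is to reach, after a bounded sequence of Kempe changes using only colors from $\{1,2,\alpha+1,\Delta,\gamma\}$, a coloring in which $r$ and $s_1$ miss a common color, so that $rs_1$ can be colored --- impossible since $rs_1$ is critical. Case (iii) is the most direct: $s_1$ carries the $\gamma$-edge $us_1$ but misses $\Delta$, so $P_{s_1}(\gamma,\Delta)=(s_1,u,x,\ldots)$; interchanging colors on this chain makes $s_1$ miss $\gamma$, and --- as $\gamma\notin\pbar(F)$ --- one checks this disturbs neither $\pbar(s_j)$ for $j\ge 2$ nor the fan edges, after which one re-examines the $(1,\gamma)$-chain between $r$ and $s_1$ (or uses a $\gamma$-colored edge at $r$ whose far endpoint lies outside $V(F)\cup\{u\}$) to force $r$ and $s_1$ to miss $\gamma$ together. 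In cases (i) and (ii) the offending color already lies inside $F$, and one works from the backbone chains instead: along $P_r(1,\alpha+1)$, in case (i), $u$ sits between $r$ and $s_1$ (since $us_1$ is then the $1$-edge at $u$), so interchanging colors along a suitable segment of it --- or along a $(\gamma,\gamma+1)$-chain that runs down the $2$-inducing sequence, in case (ii) --- puts $\alpha+1$ (resp.\ $\gamma$) into $\pbar(r)$; then, using $\pbar(x)=\alpha+1$ and the locations of $ux$ and $ru$, a further interchange of an $(\alpha+1,\Delta)$-chain through $x$ and $u$ is arranged so as to carry a color of $\pbar(s_1)$ onto $r$.

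\textbf{Main obstacle.} The crisp step --- threading the chains $P_r(1,\alpha+1)$ and $P_x(\alpha+1,\Delta)$ through $u$ --- is short; the real labor lies in the bookkeeping for cases (i) and (ii), and to a lesser extent (iii). There the chains one must interchange involve colors that also appear on fan edges (in a typical multifan $\pbar(s_i)=i+1=\varphi(rs_{i+1})$), so one must show, using the elementarity of $V(F)$ and the exact positions of $u$ and $r$ on the backbone chains, that these chains meet $V(F)$ only at the intended vertices, and that each interchange is $(F,\varphi)$-stable --- or, when it is not, track precisely how $\pbar$ changes on $V(F)\cup\{u,x\}$ --- so that the structure being exploited survives from step to step. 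A secondary but unavoidable chore is to dispatch the degenerate shapes ($\beta=\alpha$, in which case there is no proper $\Delta$-inducing part; $\alpha=1$; $\beta+1=\Delta$) uniformly, where for the first of these one has to verify that the hypothesis is then vacuous.
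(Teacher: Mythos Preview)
Your case division matches the paper's (the paper calls your case~(ii) Subcase~1.1 and your case~(iii) Subcase~1.2, and rules out $\gamma=1$ via a preliminary claim), but your assessment of difficulty is inverted, and case~(iii) as written does not go through.

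\textbf{The genuine gap is case~(iii).} You call it ``the most direct'' and propose to swap on $P_{s_1}(\gamma,\Delta)=(s_1,u,x,\ldots)$. But $\Delta$ is a fan color: it lies in $\pbar(s_1)$, and if $\beta>\alpha$ it is also $\varphi(rs_{\alpha+1})$. So the swap changes $\pbar(s_1)$ from $\{2,\Delta\}$ to $\{2,\gamma\}$ and may recolor fan edges; your claim that it ``disturbs neither $\pbar(s_j)$ for $j\ge2$ nor the fan edges'' is simply false for $\pbar(s_1)$ and unverified for $rs_{\alpha+1}$. After this swap there is no reason $r$ and $s_1$ should be $(1,\gamma)$-linked, and your fallback --- ``use a $\gamma$-coloured edge at $r$ whose far endpoint lies outside $V(F)\cup\{u\}$'' --- is exactly the hard part: that endpoint is some $w_1\in\{s_{\beta+1},\ldots,s_{\Delta-2}\}$ about which you know nothing a~priori. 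The paper spends most of the proof here. It first upgrades the $\gamma\ne1$ observation to a robust claim: for \emph{every} $(L,\varphi)$-stable $\varphi^*$ one has $\varphi^*(us_1)\ne1$, and moreover $us_1\in P_r(1,\tau,\varphi^*)$ whenever the color is unchanged. This stability under Kempe $(1,*)$-changes is then fed into an auxiliary lemma (Lemma~\ref{Lemma:pseudo-fan0}) that produces a sequence $w_1,\ldots,w_t$ of non-fan neighbours of $r$ forming either a \emph{stable rotation} ($\pbar(w_t)=\tau_1$, all $w_i$ $(1,\pbar(w_i))$-linked with $r$) or a \emph{near-stable rotation} ($\pbar(w_t)=\alpha+1$). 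Only with this structure in hand can one dispatch case~(iii), and even then several subcases (including $w_t=x$) require separate treatment. None of this machinery appears in your sketch.

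Two smaller points. For case~(ii) the paper does not use a $(\gamma,\gamma+1)$-chain; it shifts $s_\tau{:}s_\alpha$, swaps on the segment $P_{[s_\tau,u]}(1,\tau)$, and then recolors $ru,ux,us_1$ to leave $\tau\in\pbar'(r)\cap\pbar'(s_{\tau-1})$. And statement~(2) is not argued ``entirely in parallel'': the paper reduces it to~(1) by swapping $P_{s_1}(\alpha+1,\Delta)$, uncoloring $rs_\alpha$, shifting $s_2{:}s_{\alpha-1}$, coloring $rs_1$ by~$2$, and relabeling so that $s_\alpha$ plays the role of $s_1$ in a new typical multifan with the same $\Delta$-inducing sequence.
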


Since in a typical 2-inducing multifan, $\Delta\in \pbar(s_1)$ is the only $\Delta$-inducing color,  we 
have the following consequence of Theorem~\ref{Lem:2-non-adj1*}. 
\begin{COR}\label{Lem:2-non-adj1}
	Let  $(G,rs_1,\varphi)$ be a coloring-triple,
	$F:=F_\varphi(r,s_1:s_\alpha)$ be a typical 2-inducing  multifan, and  $L:=(F,ru,u,ux,x)$ be a lollipop centered at $r$.  If $\varphi(ru)=\alpha+1$, $\pbar(x)=\alpha+1$, 
	and $\varphi(ux)=\Delta$, 
	then  $u\not\sim s_1$ and $u\not\sim s_\alpha$. 
\end{COR}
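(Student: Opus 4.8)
The plan is to deduce the Corollary directly from Theorem~\ref{Lem:2-non-adj1*}, once we pin down the $\Delta$-inducing colors of a typical $2$-inducing multifan. Recall that $F=F_\varphi(r,s_1:s_\alpha)$ means $\beta=\alpha$ in the notation $F_\varphi(r,s_1:s_\alpha:s_\beta)$, so the $\Delta$-inducing subsequence $s_{\alpha+1},\ldots,s_\beta$ is empty. By the structure of a typical multifan, $\varphi(rs_i)=i$ for every $i\in[2,\alpha]$, and since $\pbar(s_\alpha)=\alpha+1$ must be a legal color we have $\alpha\le\Delta-1$; hence no edge $rs_i$ with $i\in[2,\alpha]$ is colored $\Delta$. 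Consequently there is no $\Delta$-inducing sequence of positive length, and the only $\Delta$-inducing color of $F$ is $\Delta$ itself (which lies in $\pbar(s_1)=\{2,\Delta\}$).

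First I would treat the case $u\sim s_1$. Since $u\ne r$ and $x\notin\{s_1,\ldots,s_\beta\}$, the edge $us_1$ exists, is distinct from $rs_1$ and from $ux$, and is therefore colored by $\varphi$. By Theorem~\ref{Lem:2-non-adj1*}(1), $\varphi(us_1)$ is a $\Delta$-inducing color of $F$, hence $\varphi(us_1)=\Delta$ by the previous paragraph. But $\Delta\in\pbar(s_1)$, i.e.\ $\Delta$ is missing at $s_1$ in the coloring $\varphi$ of $G-rs_1$, so no edge incident to $s_1$ can be colored $\Delta$ --- a contradiction. (Equivalently, $\varphi(ux)=\Delta$ already forces $\varphi(us_1)\ne\Delta$ since both edges meet $u$.) Hence $u\not\sim s_1$.

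The case $u\sim s_\alpha$ is symmetric: the edge $us_\alpha$ exists and, since $x\ne s_\alpha$, is distinct from $ux$, hence colored by $\varphi$. By Theorem~\ref{Lem:2-non-adj1*}(2), $\varphi(us_\alpha)$ is a $\Delta$-inducing color of $F$, so $\varphi(us_\alpha)=\Delta$; but $\varphi(ux)=\Delta$ and $us_\alpha$, $ux$ are both incident to $u$, contradicting that $\varphi$ is a proper edge coloring. Hence $u\not\sim s_\alpha$, which completes the argument.

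I do not expect a genuine obstacle here: the entire content is carried by Theorem~\ref{Lem:2-non-adj1*}, and the Corollary follows from the elementary observation that a typical $2$-inducing multifan has $\Delta$ as its unique $\Delta$-inducing color, combined with the fact that $\Delta$ is missing at $s_1$ (and already used at $u$ on $ux$). The only place calling for a word of care is verifying that the edges $us_1$ and $us_\alpha$ are actually present and properly colored, which is immediate from $u\ne r$ and $x\notin\{s_1,\ldots,s_\beta\}$.
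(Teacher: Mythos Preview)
Your proof is correct and follows exactly the paper's approach: the paper derives the corollary in one line by noting that in a typical $2$-inducing multifan $\Delta$ is the only $\Delta$-inducing color, and you have simply filled in the immediate contradictions (via $\Delta\in\pbar(s_1)$ for the $s_1$ case and via $\varphi(ux)=\Delta$ at $u$ for the $s_\alpha$ case).
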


\begin{THM}\label{Lem:2-non-adj2}
	Let  $(G,rs_1,\varphi)$ be a coloring-triple, 
	$F:=F_\varphi(r,s_1:s_\alpha)$ be a typical 2-inducing  multifan, and  $L:=(F,ru,u,ux,x)$ be a lollipop centered at $r$.  If $\varphi(ru)=\alpha+1$, $\pbar(x)=\alpha+1$, 
	and $\varphi(ux)=\mu\in \pbar(F)$ is a 2-inducing color of $F$, 
	then  $u\not\sim s_{\mu-1}$ and $u\not\sim s_\mu$.  
\end{THM}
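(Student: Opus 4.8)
The plan is to derive both non-adjacencies from Corollary~\ref{Lem:2-non-adj1} — the case in which $\varphi(ux)$ is the $\Delta$-inducing color — by rotating $F$ and, for the second assertion, performing one Kempe change on $ux$, so that $\mu$ (or the color that replaces it on $ux$) becomes exactly the $\Delta$-role color of a typical $2$-inducing multifan whose first vertex is $s_\mu$, respectively $s_{\mu-1}$. Throughout I use the elementary facts $\pbar(u)=\emptyset$ (as $d_G(u)=\Delta$ and $u\notin\{r,s_1\}$), $\pbar(x)=\{\alpha+1\}$, and $\mu\le\alpha$ with $\alpha+1\notin\{\mu,\Delta\}$ (the last because $\varphi(ux)=\mu$ and $\varphi(ru)=\alpha+1$ are both present at $u$, and because $\varphi$-elementarity of $V(F)$ together with $\Delta,\alpha+1\in\pbar(F)$ forbids $\alpha+1=\Delta$).

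\emph{Proof that $u\not\sim s_\mu$.} Rotate $F$ along its $2$-inducing sequence, recoloring $rs_i$ by $i+1$ for $i\in[1,\mu-1]$ and uncoloring $rs_\mu$; this yields $\psi\in\CC^\Delta(G-rs_\mu)$ with $(G,rs_\mu,\psi)$ a coloring-triple, $\pbar_\psi(s_\mu)=\{\mu,\mu+1\}$, and $\psi$ agreeing with $\varphi$ on $u$, $x$ and on the edges $ru$, $ux$. After relabelling, $\hat F:=(r,rs_\mu,s_\mu,rs_{\mu+1},s_{\mu+1},\dots,rs_\alpha,s_\alpha)$ is a typical $2$-inducing multifan with $\mu$ as its $\Delta$-role color and $\alpha+1$ as its last $2$-inducing color (when $\mu=\alpha$ this is the trivial multifan $(r,rs_\alpha,s_\alpha)$); $(\hat F,ru,u,ux,x)$ is a lollipop because $x\notin\{s_1,\dots,s_\alpha\}$; and $\psi(ru)=\alpha+1=\pbar_\psi(x)$ while $\psi(ux)=\mu$. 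Corollary~\ref{Lem:2-non-adj1} then gives $u\not\sim s_\mu$ (and, incidentally, $u\not\sim s_\alpha$).

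\emph{Proof that $u\not\sim s_{\mu-1}$.} If $\mu\ge3$, rotate $F$ so that $rs_{\mu-1}$ becomes the uncolored edge, obtaining $\psi'$ with $\pbar_{\psi'}(s_{\mu-1})=\{\mu-1,\mu\}$ and $\psi'=\varphi$ on $u$, $x$, $ru$, $ux$; if $\mu=2$ no rotation is needed, $s_{\mu-1}=s_1$, $rs_1$ is already uncolored, and $\pbar(s_1)=\{2,\Delta\}$. Let $\eta$ be the color missing at $s_{\mu-1}$ other than $\mu$ (so $\eta=\mu-1$ if $\mu\ge3$, $\eta=\Delta$ if $\mu=2$), and recolor $ux$ from $\mu$ to $\eta$ by interchanging $\mu$ and $\eta$ along the $(\mu,\eta)$-chain through $ux$. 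Provided that chain avoids $r$, the multifan $\tilde F:=(r,rs_{\mu-1},s_{\mu-1},rs_\mu,s_\mu,\dots,rs_\alpha,s_\alpha)$ — which equals $F$ when $\mu=2$ — survives (up to relabelling) as a typical $2$-inducing multifan with $\eta$ as its $\Delta$-role color and $\alpha+1$ as its last $2$-inducing color, $(\tilde F,ru,u,ux,x)$ is a lollipop, and in the new coloring $\varphi'$ one has $\varphi'(ru)=\alpha+1=\pbar'(x)$ and $\varphi'(ux)=\eta$; hence Corollary~\ref{Lem:2-non-adj1} yields $u\not\sim s_{\mu-1}$.

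\emph{The main obstacle.} The single delicate point is the proviso just used: the $(\mu,\eta)$-chain through $ux$ must avoid $r$, for otherwise it contains the edges $rs_\mu$ and $rs_{\mu-2}$ (if $\mu\ge3$), or $rs_2$ and the $\Delta$-colored edge at $r$ (if $\mu=2$), so that the interchange would dismantle $\tilde F$. I expect this to be forced by the chain and linkage structure of lollipops (the lollipop analogue of Theorem~\ref{pseudo-fan-ele}(c)--(d)), and, in the residual cases where the chain cannot be kept off $r$ outright, by a preliminary Kempe change normalizing the configuration or by a direct contradiction drawn from the color of the edge $us_{\mu-1}$ (which is available once the unwanted adjacency $u\sim s_{\mu-1}$ is assumed). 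Verifying that the rotations produce coloring-triples and clearing the trivial-multifan degeneracies ($\mu=\alpha$; $\mu=2$) are routine.
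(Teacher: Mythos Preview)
Your reduction for $u\not\sim s_\mu$ is correct and in fact cleaner than the paper's argument. The paper also rotates so that $rs_\mu$ becomes the uncolored edge, but it then works with the \emph{full} multifan $F'=(r,rs_\mu,s_\mu,\ldots,s_\alpha,rs_{\mu-1},s_{\mu-1},\ldots,s_1)$, which has a nontrivial $\mu$-inducing tail and is therefore not $2$-inducing; Theorem~\ref{Lem:2-non-adj1*}(1) only yields that $\varphi(us_\mu)$ is $\mu$-inducing, and the paper must then dispose of the remaining cases $\tau\prec\mu$ and $\tau=\Delta$ by separate Kempe-chain arguments (Subcases~2.1 and~2.2). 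Your observation that the \emph{truncated} multifan $\hat F=(r,rs_\mu,s_\mu,\ldots,s_\alpha)$ is still typical $2$-inducing, so that Corollary~\ref{Lem:2-non-adj1} applies directly and kills the adjacency outright, bypasses all of that.

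The argument for $u\not\sim s_{\mu-1}$, however, has a genuine gap exactly where you flag it. There is no mechanism forcing the $(\mu,\eta)$-chain through $ux$ to avoid $r$: in the rotated coloring $\psi'$ the vertex $r$ carries both $\mu$ (on $rs_\mu$) and $\eta$ (on $rs_{\mu-2}$, or on the $\Delta$-edge when $\mu=2$), while $s_{\mu-1}$ is isolated in the $(\mu,\eta)$-subgraph; nothing in the lollipop machinery (Lemma~\ref{Lemma:extended multifan}) controls this particular chain. When the chain does pass through $r$, the swap sends the whole sequence $s_\mu,\ldots,s_\alpha$ into the $\eta$-inducing side of $\tilde F$ and simultaneously drags $s_{\mu-2},s_{\mu-3},\ldots$ into the $\mu$-inducing side, so $\tilde F$ ceases to be $2$-inducing and Corollary~\ref{Lem:2-non-adj1} no longer applies. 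The fallback you sketch --- exploiting the color $\tau=\varphi(us_{\mu-1})$ directly --- is precisely the paper's route: it proves $\tau\ne1$ in every $(L,\varphi)$-stable coloring (Claim~\ref{not1_3}), and then runs a substantial case analysis on whether $\tau\in\pbar(F)$ or $\tau\in[\alpha+2,\Delta-1]$, the latter requiring the rotation/near-rotation structure of Lemma~\ref{Lemma:pseudo-fan0}. That analysis is the actual content of the theorem for this half, and your proposal does not supply it.
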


\section{Preliminaries}

In this section, we list some known results on multifans and 
introduce further notation. 

\begin{LEM}[{\cite[Theorem~2.1]{StiebSTF-Book}}]
	\label{thm:vizing-fan1}
	Let $G$ be a class 2 graph and $F_\varphi(r,s_1:s_p)$  be a multifan with respect to   $rs_1$ and  $\varphi\in \CC^\Delta(G-rs_1)$. Then  the following statements  hold. 
	\begin{enumerate}[(a)]
		\item $V(F)$ is $\varphi$-elementary. \label{thm:vizing-fan1a}
		\item For any $\alpha\in \pbar(r)$ and any  $\beta\in \pbar(s_i)$ with $i\in [1,p]$,  $r$ 
		and $s_i$ are $(\alpha,\beta)$-linked with respect to $\varphi$. \label{thm:vizing-fan1b}
	\end{enumerate}
\end{LEM}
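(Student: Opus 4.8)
The plan is to run the classical fan-shifting argument. Since $G$ is class $2$ we have $\chi'(G)=\Delta+1$, so $G$ has no edge $\Delta$-coloring, whereas $\varphi\in\CC^\Delta(G-rs_1)$; hence any edge $\Delta$-coloring of a graph $G-e$ together with a colour missing at both ends of $e$ would produce an edge $\Delta$-coloring of $G$ -- a contradiction, and this is how every branch below closes. The engine is the \emph{fan shift}. Fix $i\in[1,p]$; tracing the multifan condition backwards from $s_i$ gives a sequence $s_{q_0},s_{q_1},\dots,s_{q_m}$ with $q_0=1$, $q_m=i$, $q_0<q_1<\dots<q_m$, and $\varphi(rs_{q_t})\in\pbar(s_{q_{t-1}})$ for all $t\in[1,m]$ (with $m=0$ when $i=1$). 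Recolouring $rs_{q_t}$ with $\varphi(rs_{q_{t+1}})$ for each $t\in[0,m-1]$ and uncolouring $rs_{q_m}=rs_i$ yields $\varphi_i\in\CC^\Delta(G-rs_i)$, and a direct check shows that the colours missing at $r$ under $\varphi_i$ coincide with those under $\varphi$, while $s_i$ misses $\pbar(s_i)\cup\{\varphi(rs_i)\}$ under $\varphi_i$. I would first prove the ``$r$ versus $s_i$'' half of (a) and all of (b), then deduce the rest of (a).

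\emph{The ``$r$ versus $s_i$'' half of (a), and (b).} If $\tau\in\pbar(r)\cap\pbar(s_i)$ for some $i$, then under $\varphi_i$ the edge $rs_i$ is uncoloured and $\tau$ is missing at both its ends, so colouring $rs_i$ with $\tau$ is a contradiction; hence $\pbar(r)\cap\pbar(s_i)=\emptyset$ for all $i$, and every $\alpha\in\pbar(r)$, $\beta\in\pbar(s_i)$ satisfy $\alpha\ne\beta$. For (b) I would induct on $i$. If $i=1$ and $r,s_1$ are $(\alpha,\beta)$-unlinked ($\alpha\in\pbar(r)$, $\beta\in\pbar(s_1)$), the Kempe change on the $(\alpha,\beta)$-chain through $s_1$ avoids $r$, so $r$ still misses $\alpha$ and $s_1$ now misses $\alpha$; colouring the uncoloured edge $rs_1$ with $\alpha$ is a contradiction. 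For $i\ge 2$, assume $r,s_i$ are $(\alpha,\beta)$-unlinked and fix a backward sequence $s_{q_0},\dots,s_{q_m}$ as above. If $\beta\notin\{\varphi(rs_{q_1}),\dots,\varphi(rs_{q_m})\}$, then the fan shift to $\varphi_i$ alters no edge coloured $\alpha$ or $\beta$, so $r,s_i$ stay $(\alpha,\beta)$-unlinked under $\varphi_i$; the Kempe change on the $(\alpha,\beta)$-chain through $s_i$ avoids $r$ and makes $\alpha$ missing at both $r$ and $s_i$, and colouring the uncoloured edge $rs_i$ with $\alpha$ is a contradiction. Otherwise $\beta=\varphi(rs_{q_t})$ for some $t\in[1,m]$, so $\beta\in\pbar(s_{q_{t-1}})$ with $q_{t-1}<i$, and $t\le m-1$ since $\varphi(rs_i)\ne\beta$. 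By the induction hypothesis $r$ and $s_{q_{t-1}}$ are $(\alpha,\beta)$-linked, hence $s_{q_{t-1}}$ is the endpoint of $P_r(\alpha,\beta)$ other than $r$; since $s_i\ne r$ also misses $\beta$, it follows that $s_i\notin V(P_r(\alpha,\beta))$, and $rs_{q_t}$ is the only fan edge lying on $P_r(\alpha,\beta)$. Put $\varphi'=\varphi/P_r(\alpha,\beta)$: then $\varphi'$ agrees with $\varphi$ on every fan edge except $rs_{q_t}$ (now coloured $\alpha$), the missing sets at all fan vertices are unchanged except at $s_{q_{t-1}}$ (where $\beta$ is replaced by $\alpha$), and $\beta\in\pbar'(r)$, $\beta\in\pbar'(s_i)$. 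Consequently the sequence $s_{q_0},\dots,s_{q_m}$ still witnesses $s_i$ in $G-rs_1$ under $\varphi'$ (the recoloured edge $rs_{q_t}$, now coloured $\alpha$, is witnessed by $s_{q_{t-1}}$, which now misses $\alpha$), and $rs_i$ is still coloured; shifting $\varphi'$ from $s_1$ to $s_i$ along that sequence uncolours $rs_i$ while keeping $\beta$ missing at both $r$ and $s_i$, and colouring $rs_i$ with $\beta$ is again a contradiction. This proves (b).

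\emph{The rest of (a).} As $rs_1$ is uncoloured, $\pbar(r)\ne\emptyset$; fix $\alpha\in\pbar(r)$. If $\beta\in\pbar(s_i)\cap\pbar(s_j)$ for distinct $i,j$, then by (b) both $s_i$ and $s_j$ lie on $P_r(\alpha,\beta)$, which is a path because $r$ misses $\alpha$; since $s_i,s_j$ each miss $\beta$ and differ from $r$, each is the endpoint of $P_r(\alpha,\beta)$ distinct from $r$, forcing $s_i=s_j$ -- a contradiction. Hence $V(F)$ is $\varphi$-elementary.

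I expect the main obstacle to be the inductive step of (b) in the case $\beta=\varphi(rs_{q_t})$: one must check carefully that the fan shift and the Kempe change on $P_r(\alpha,\beta)$ perturb only the single fan edge $rs_{q_t}$ and the single missing set $\pbar(s_{q_{t-1}})$, that $rs_1$ stays uncoloured and $rs_i$ stays coloured throughout, and hence that the fan shift is still applicable to $\varphi'$. The remaining steps are one-step recolouring arguments or short endpoint-of-a-path arguments.
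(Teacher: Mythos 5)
The paper does not prove this lemma at all: it is quoted as Theorem~2.1 of the Stiebitz--Scheide--Toft--Favrholdt book, so there is no internal proof to compare against. Your argument is a correct, self-contained proof along the classical lines of that cited result (fan shift plus Kempe-chain arguments): the key verifications all go through --- the shift preserves $\pbar(r)$ and only enlarges $\pbar(s_i)$ by $\varphi(rs_i)$; in the inductive step of (b) the only fan edge on $P_r(\alpha,\beta)$ is indeed $rs_{q_t}$, the swapped coloring still admits the same witness sequence (with $s_{q_{t-1}}$ now missing $\alpha$), and $t\le m-1$ guarantees $rs_i$ stays colored; and deducing (a) from (b) via the two-endpoints argument is standard. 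The fact that your Case~2 reaches a contradiction without ever invoking the unlinkedness assumption is harmless (that configuration simply cannot occur, consistent with (a)), so the proof stands as written.
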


By Lemma~\ref{thm:vizing-fan1} (a) and the definition of a multifan $F_\varphi(r,s_1:s_p)$, each color in $\pbar(F)\setminus \pbar(r)$ is induced by a unique color in $\pbar(s_1)$. Also if $\alpha_1$ and $\alpha_2$ are two distinct colors in $\pbar(s_1)$, then an $\alpha_1$-inducing sequence is disjoint from  an $\alpha_2$-inducing sequence. 
As a consequence of Lemma~\ref{thm:vizing-fan1} (a), we have the following properties for a multifan. 
\begin{LEM}
	\label{thm:vizing-fan2}
	Let $G$ be a class 2 graph and $F_\varphi(r,s_1:s_p)$  be a multifan with respect to  $rs_1$ and  $\varphi\in \CC^\Delta(G-rs_1)$. For any two colors $\delta, \lambda$ with $\delta\in \pbar(s_i)$ and $\lambda\in \pbar(s_j)$ for some distinct $i,j\in [1,p]$, the following statements  hold.
	\begin{enumerate}[(a)]
		\item If $\delta$ and $\lambda$ are induced by different colors from $\pbar(s_1)$, then $s_i$ and $s_j$ are $(\delta, \lambda)$-linked with respect to $\varphi$. 
		\label{thm:vizing-fan2-a}
		\item If $\delta$ and $\lambda$ are induced by the same color from $\pbar(s_1)$ such that $\delta\prec\lambda$ and $s_i$ and $s_j$ are $(\delta, \lambda)$-unlinked with respect to $\varphi$, 
		then $r\in P_{s_j}(\delta,\lambda, \varphi)$.  	\label{thm:vizing-fan2-b}
	\end{enumerate}
	
\end{LEM}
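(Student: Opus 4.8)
The statement is Lemma~\ref{thm:vizing-fan2}, asserting two linking properties of multifan vertices, and the whole argument rests on Lemma~\ref{thm:vizing-fan1}, especially part (b), which says each vertex $s_i$ is $(\alpha,\beta)$-linked to $r$ whenever $\alpha\in\pbar(r)$ and $\beta\in\pbar(s_i)$. My plan is to fix $\alpha\in\pbar(r)$ (which exists since $G$ is class~2, so $|\pbar(r)|\ge1$) and argue both parts by a standard ``if unlinked then Kempe-change and extend'' contradiction, exploiting that $V(F)$ is $\varphi$-elementary after any relevant Kempe change that leaves $V(F)$ intact.

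For part (a): suppose $\delta\in\pbar(s_i)$ and $\lambda\in\pbar(s_j)$ are induced by different colors $\alpha_1,\alpha_2\in\pbar(s_1)$, and suppose toward a contradiction that $s_i$ and $s_j$ are $(\delta,\lambda)$-unlinked. I would perform the Kempe change on the $(\delta,\lambda)$-chain $C=P_{s_j}(\delta,\lambda,\varphi)$ ending at $s_j$ (which does not contain $s_i$ by assumption, nor $r$ since neither $\delta$ nor $\lambda$ is missing at $r$—$r$ misses $\alpha$—so $r$ has both colors present and lies on some $(\delta,\lambda)$-path, but we must check $r\notin C$; in fact since $s_j$ is an endvertex of $C$ and $r$ is not, we only care that after recoloring the colors at $r$ are unchanged). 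After the swap $\varphi'=\varphi/C$, $s_j$ now misses $\delta$ while $s_1$ still misses $\delta$ (as $s_1\ne s_j$ and $s_1\notin C$ since $\delta,\lambda\notin\pbar(s_1)$ forces $s_1$ to have both present, but $s_1$ is an endpoint of its own chain—again I only need that recoloring $C$ does not touch $s_1$; this requires $s_1\notin V(C)$, which I get because $C$ ends at $s_j$ and if $s_1\in C$ then $s_1$ is internal, contradicting that $s_1$ misses neither... I will need to be careful here and instead argue directly). The cleaner route: after $\varphi'=\varphi/C$, both $s_1$ and $s_j$ (if $s_1$ untouched) miss $\delta$, contradicting that $V(F)$ is $\varphi'$-elementary—but $F$ need not be a multifan under $\varphi'$. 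So the right tool is to use the $\alpha$-inducing structure: rebuild the multifan along the $\delta$-inducing (equivalently $\alpha_1$-inducing) sub-chain from $s_1$ to $s_i$, transfer the coloring so that $\delta$ becomes missing at a vertex where the competing color $\lambda$ is also forced missing, and invoke Lemma~\ref{thm:vizing-fan1}(a). Concretely I would shift colors down the $\alpha_1$-inducing path $s_1,s_{\ell_1},\dots,s_i$ to get a multifan $F'$ (still maximum-irrelevant, just a multifan) in which $\delta\in\pbar(s_1)$; simultaneously the $\alpha_2$-inducing path keeps $\lambda$ reachable, and then the unlinkedness of a color missing at $s_1$ from a color missing at some $s_j$ in a multifan contradicts Lemma~\ref{thm:vizing-fan1}(b) with $\alpha:=\delta$.

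For part (b): here $\delta\prec\lambda$ are induced by the same color of $\pbar(s_1)$, say along one $\alpha$-inducing sequence, with $s_i$ and $s_j$ $(\delta,\lambda)$-unlinked; the goal is $r\in P_{s_j}(\delta,\lambda,\varphi)$. I would again shift colors along the inducing sequence so that $\delta$ is moved to be missing at $s_1$ (legitimate since $\delta\prec\lambda$ means $s_i$ comes before $s_j$, so truncating the multifan at $s_i$ and recoloring the prefix makes $\delta\in\pbar(s_1)$ in the new coloring $\varphi''$, with $\lambda$ still missing at the image of $s_j$ because that vertex and its incident edge lie outside the recolored prefix). Under $\varphi''$, Lemma~\ref{thm:vizing-fan1}(b) applied with the missing color $\alpha:=\delta$ at $s_1$... no—$\delta$ is missing at $s_1$, not at $r$. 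Instead I use: in the new multifan, $s_j$ still misses $\lambda$ and $s_1$ misses $\delta$, and $\{s_1,s_j\}$ elementary means they are distinct missing colors; if $r\notin P_{s_j}(\delta,\lambda)$ then swapping this chain keeps $r$'s missing set fixed and makes $s_j$ miss $\delta$, giving two vertices of the multifan missing $\delta$—contradicting Lemma~\ref{thm:vizing-fan1}(a) for the new multifan. Pulling back the color shift to $\varphi$ yields $r\in P_{s_j}(\delta,\lambda,\varphi)$ as claimed.

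\textbf{Main obstacle.} The delicate point throughout is bookkeeping the Kempe changes and color-shifts along the inducing sequences so that (i) the shifted sequence is genuinely still a multifan, (ii) the vertex $s_j$ and the color $\lambda$ (resp.\ the competing structure in part (a)) are untouched by the shift, and (iii) $r$'s missing set is preserved so that the ``swap the chain not containing $r$'' move produces a legitimate coloring of $G-rs_1$ with two multifan vertices missing a common color. Getting the disjointness claims exactly right—particularly verifying $r\notin C$ and $s_1\notin C$ for the relevant chains, using that $r$ misses $\alpha$ and the elementariness of $V(F)$—is where the real care is needed; the rest is a routine application of Lemma~\ref{thm:vizing-fan1}.
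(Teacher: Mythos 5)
Your overall instinct---``if unlinked, do a Kempe change on $P_{s_j}(\delta,\lambda)$ and contradict Lemma~\ref{thm:vizing-fan1}''---is the right one, and it is the paper's strategy, but the concrete route you settle on has two genuine errors. First, the ``cleaner route'' for (a) ends by invoking Lemma~\ref{thm:vizing-fan1}(b) with $\alpha:=\delta\in\pbar(s_1)$; that lemma requires the first color to be missing at the \emph{center} $r$ and only yields that $r$ and a fan vertex are linked. Linkedness of two non-center vertices $s_i,s_j$ is exactly the content of part (a) being proved, so as written the step is both a misapplication and circular. Second, the recoloring you rely on in both parts---``shift colors down the $\alpha_1$-inducing path so that $\delta\in\pbar(s_1)$''---does not do what you claim: by the paper's definition a shift only recolors the edges $rs_\ell$ by the colors $\pbar(s_\ell)$; it never changes $\pbar(s_1)$ (the color $\delta$ sits on an edge at $s_1$ not incident to $r$), and as described it would place $\delta$ on $rs_i$ while $\delta$ is still present at $r$ on another edge (by elementariness $\delta$ is not the color of any edge in the inducing sequence), so the intermediate coloring is not even proper without further repairs. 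Consequently the ``pull back the shift'' step in (b) has nothing correct to pull back.

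What is missing is the observation that makes the direct swap work, i.e.\ the route you started and abandoned. By Lemma~\ref{thm:vizing-fan1}(a), $s_i$ and $s_j$ are the only vertices of $F$ missing $\delta$ or $\lambda$; since they are assumed $(\delta,\lambda)$-unlinked, the other endvertex of $P_{s_j}(\delta,\lambda,\varphi)$ lies outside $V(F)$. For (a), take an $\alpha_1$-inducing sequence ending at $s_i$ and an $\alpha_2$-inducing sequence ending at $s_j$ and let $F^*$ be the multifan they form with $r,s_1$; none of the edges of $F^*$ is colored $\delta$ or $\lambda$ (elementariness again), so after $\varphi'=\varphi/P_{s_j}(\delta,\lambda,\varphi)$ the sequence $F^*$ is still a multifan---this is why you need not worry whether $r$ or $s_1$ is an interior vertex of the chain---and $\delta\in\pbar'(s_i)\cap\pbar'(s_j)$ contradicts Lemma~\ref{thm:vizing-fan1}(a). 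For (b), no preliminary recoloring is needed at all: assuming $r\notin P_{s_j}(\delta,\lambda,\varphi)$, the swap changes nothing on $F$ except at $s_j$, so the truncated fan $F_{\varphi'}(r,s_1:s_j)$ is still a multifan while $\delta$ is missing at both $s_i$ and $s_j$, the same contradiction. So the gap is not mere bookkeeping: the sub-multifan truncation plus the ``other endpoint is outside $V(F)$'' argument is the missing idea, and the shift-based replacement you propose would fail.
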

\begin{proof}
	
	For (a),  suppose otherwise that $s_i$ and $s_j$ are $(\delta,\lambda)$-unlinked with respect to $\varphi$. Assume that $\delta$ and $\lambda$ are induced by $\alpha$ and $\beta$ respectively where $\alpha,\beta \in \pbar(s_1)$ are  distinct.  Let $ s_{i_1}, s_{i_2},\ldots,s_{i_k}=s_i$ be an $\alpha$-inducing sequence containing $s_i$, and $ s_{j_1},s_{j_2},\ldots,s_{j_\ell}=s_j$ be a $\beta$-inducing sequence containing $s_j$. Since $V(F)$ is $\varphi$-elementary, $s_i$ is the only vertex in $F$ 
	that misses $\delta$. Therefore, the other end  
	of $P_{s_j}(\delta,\lambda,\varphi)$ is outside of $V(F)$.  Let $\varphi'=\varphi/ P_{s_j}(\delta,\lambda,\varphi)$. 
	Then $F^*=(r, rs_1, s_1, rs_{i_1},s_{i_1},\ldots,s_{i_k},rs_{j_1},s_{j_1},\ldots,s_{j_\ell})$
	is a multifan under $\varphi'$. However, $\delta\in \pbar'(s_i)\cap \pbar'(s_j)$, contradicting Theorem~\ref{thm:vizing-fan1} (a).  
	
	For (b),  suppose otherwise that $r\not\in P_{s_j}(\delta,\lambda,\varphi)$. 
	Assume, without loss of generality, that  $i<j$, and  $  s_2, \ldots, s_i, s_{i+1}, \ldots, s_j$ is an $\alpha$-inducing sequence for some $\alpha\in \pbar(s_1)$.
	Since $V(F)$ is $\varphi$-elementary, $s_i$ is the only vertex in $F$ 
	that misses $\delta$. Therefore,  when $s_i$ and $s_j$ are $(\delta,\lambda)$-unlinked with respect to $\varphi$, the other end  
	of $P_{s_j}(\delta,\lambda,\varphi)$ is outside of $V(F)$.   
	Let $\varphi'=\varphi/ P_{s_j}(\delta,\lambda,\varphi)$.  
	Since $r\not\in P_{s_j}(\delta,\lambda,\varphi)$, 
	$\varphi'$ agrees with $\varphi$ on $F$ at every edge and every vertex except $s_j$. 
	Therefore,  the sequence $F_{\varphi'}(r,s_1: s_j)$, 
	obtained from $F_\varphi(r,s_1:s_p)$ by deleting every entry after $s_j$
	is still a multifan. However, $\delta\in \pbar'(s_i)\cap \pbar'(s_j)$, contradicting Theorem~\ref{thm:vizing-fan1} (a).  
\end{proof}

Let $(G,rs_1,\varphi)$ be a coloring-triple and $i,j\in [2,\Delta-2]$.  
The \emph{shift from $s_i$ to $s_j$}  is an operation that,  for each $\ell $ with $ \ell\in[i,j]$,  recolor $rs_\ell$ by  the color in $\pbar(s_\ell)$.  We will apply a shift either on a sequence of vertices from a multifan 
or on a rotation. 

Let $\alpha,\beta,\gamma,\tau\in [1,\Delta] $ and $x,y\in V(G)$. If $P$ is an 
$(\alpha,\beta)$-chain  containing both $x$ and $y$ such that $P$ is a path,  
we denote by $\mathit{P_{[x,y]}(\alpha,\beta, \varphi)}$  the subchain  of $P$ that has endvertices $x$
and $y$. 
Suppose  $|\pbar(x)\cap \{\alpha,\beta\}|=1$. Then an \emph{$(\alpha,\beta)$-swap} 
at $x$ is just the Kempe change on $P_x(\alpha,\beta,\varphi)$. By convention, an	$(\alpha,\alpha)$-swap at $x$ does  nothing at $x$. Suppose $\beta_0\in \pbar(x)$
and $\beta_1,\ldots \beta_t\in \varphi(x)$ for colors $\beta_0, \ldots, \beta_t\in [1,\Delta]$
for some integer $t \ge 1$. Then a 
$$
(\beta_0,\beta_1)-(\beta_1,\beta_2)-\ldots-(\beta_{t-1},\beta_t)-\text{swap}
$$  
at $x$ consists of $t$ Kempe changes: let $\varphi_0=\varphi$,
then $\varphi_i=\varphi_{i-1}/P_x(\beta_{i-1},\beta_i, \varphi_{i-1})$ for each $i\in [1,t]$.  
Suppose the current color of an  edge $uv$ of $G$
is $\alpha$, the notation  $\mathit{uv: \alpha\rightarrow \beta}$  means to recolor  the edge  $uv$ using the color $\beta$. 

We will use a  matrix with two rows to denote a sequence of operations  taken based on $\varphi$.
For example,   the matrix below indicates 
three consecutive operations:  
\[
\begin{bmatrix}
P_{[a, b]}(\alpha, \beta,\varphi)  & s_c:s_{d} & rs\\
\alpha/\beta & \text{shift} & \gamma \rightarrow \tau 
\end{bmatrix}.
\]
\begin{enumerate}[Step 1]
	\item Exchange $\alpha$ and $\beta$ on the $(\alpha,\beta)$-subchain $P_{[a, b]}(\alpha, \beta,\varphi) $.
	\item Based on the coloring obtained from  Step 1, shift from $s_c$ to $s_d$
	for vertices $s_c, \ldots, s_d$. 
	
	\item Based on the coloring obtained from  Step 2,  do  $rs: \gamma \rightarrow \tau $. 
\end{enumerate}

In the reminder,  for simpler description, we may skip the phrase ``with respect to $\varphi$'' in related notation, which then  needs to be understood with respect to the current edge coloring.


%
%

%

\section{Proof of Theorem~\ref{pseudo-fan-ele}}
 \begin{THM2}
 Let  $(G,rs_1,\varphi)$ be a coloring-triple, $S:=S_\varphi(r, s_1: s_t: s_{\Delta-2})$  be a pseudo-multifan 
 with $F:=F_\varphi(r,s_1:s_t)$ being  the maximum multifan contained in it. Let	$j\in [t+1,\Delta-2]$ and $\delta \in \pbar(s_j)$.   Then 
  	\begin{enumerate}[(a)]
 		\item $\{s_{t+1}, \ldots, s_{\Delta-2}\}$ can be partitioned 
 		into rotations with respect to $\varphi$. \label{pseudo-a}

 		\item $s_j$ and $r$ are $(1,\delta)$-linked with respect to  $\varphi$ . \label{pseudo-a1}
 		\item For every color $\gamma\in \pbar(F)$ with $\gamma \ne 1$,  it holds that $r\in P_{y}(\gamma,\delta)=P_{s_j}(\gamma,\delta)$, where $y=\mathit{\pbar_{F}^{-1}(\gamma)}$.  
 		Furthermore, for $z\in N_G(r)$ such that $\varphi(rz)=\gamma$,  
 		$P_{y}(\gamma,\delta)$	meets $z$ before  $r$.  \label{pseudo-b}
 		\item For every $\delta^*\in \pbar(S)\setminus \pbar(F)$ with $\delta^*\ne \delta$, it holds $P_{y}(\delta,\delta^*)=P_{s_{j}}(\delta,\delta^*)$,  where $y=\mathit{\pbar_{S}^{-1}(\delta^*)}$. Furthermore, either $r\in P_{s_j}(\delta,\delta^*)$ or $P_r(\delta, \delta^*)$ is an even cycle. 
 		\label{pseudo-c}
 	\end{enumerate}
 \end{THM2}

\pf By relabeling colors and vertices, we assume $F$ is typical and let   $F=F_\varphi(r,s_1:s_\alpha:s_\beta)$,  where  $\beta=t$.

Let $u,v$ be the two $\Delta$-neighbors of $r$ in $G$. 
Then $\pbar(F)\setminus \{1\}=\{\varphi(rs_i): i\in [2,\beta]\}\cup \{\varphi(ru), \varphi(rv)\}$. Thus  $\{\varphi(rs_i)\,:\, i\in [\beta+1, \Delta-2]\}=[1,\Delta]\setminus \pbar(F)$. 
Also $\cup_{i=\beta+1}^{\Delta-2}\pbar(s_i)=[1,\Delta] \setminus \pbar(F)$  since $V(S)$ is $\varphi$-elementary. Therefore,  
$$\cup_{i=\beta+1}^{\Delta-2}\pbar(s_i)=\{\varphi(rs_i)\,:\, i\in [\beta+1, \Delta-2]\}.$$
Thus, the sequence of missing colors  $\pbar(s_{\beta+1}), \ldots, \pbar(s_{\Delta-2})$
is a  permutation  of the sequence of colors $\varphi(rs_{\beta+1}), \ldots, \varphi(rs_{\Delta-2})$.    
Since every permutation can be partitioned into disjoint cycles, 
 $\{{s_{\beta+1}, \ldots, s_{\Delta-2}}\}$  has a partition into rotations. 
This finishes the proof for \eqref{pseudo-a}. 

By statement \eqref{pseudo-a},  there is a rotation containing $s_j$. Assume, without loss of generality, that this rotation is $s_j, s_{j+1}, \ldots, s_\ell$
in the remainder of this proof. 

For  \eqref{pseudo-a1}, if $s_j$ and $r$
 are $(1,\delta)$-unlinked with respect to $\varphi$, then $P_{s_j}(1,\delta)$ ends at a vertex outside $V(F)$ and does not contain any edge in $F$. Thus $\varphi'=\varphi/P_{s_j}(1,\delta)$ is $(F,\varphi)$-stable. But $V(S)$ is not $\varphi'$-elementary, giving a contradiction to (P2) in the definition of a pseudo-multifan.  

For the first part of statement\eqref{pseudo-b}, since $F$ is typical, suppose to the contrary that
there exists $\gamma\in \pbar(s_{\gamma-1})$, $\gamma\in[2, \beta+1]\cup \{\Delta\}$,  such that $r\in P_{s_{\gamma-1}}(\gamma,\delta)=P_{s_j}(\gamma,\delta)$ does not hold, where $s_{\Delta-1}:=s_1$.  As the proof on the $2$-inducing
sequence and $\Delta$-inducing sequence of $F$ are symmetric up to renaming vertices  and 
colors, we assume that 
$\gamma\in [2,\alpha+1]$. We have the following three cases: $r\notin P_{s_{\gamma-1}}(\gamma,\delta)$ and $r\notin P_{s_j}(\gamma,\delta)$; $r\notin P_{s_{\gamma-1}}(\gamma,\delta)$ and $r\in P_{s_j}(\gamma,\delta)$; and $r\in P_{s_{\gamma-1}}(\gamma,\delta)$ and $r\notin P_{s_j}(\gamma,\delta)$.

Suppose first that $r\notin P_{s_{\gamma-1}}(\gamma,\delta)$ and $r\notin P_{s_j}(\gamma,\delta)$. Let $\varphi'=\varphi/Q$,  where $Q$ is the $(\gamma,\delta)$-chain containing $r$. Note that $\varphi'$ and $\varphi$ agree on every edge incident to $r$ except two edges $rs_{j+1}$ and $rz$,  where recall $z\in N_G(r)$ such that $\varphi(rz)=\gamma$. Thus $z=s_{\gamma}$ if $\gamma \le\alpha$. If $\gamma \le\alpha$,  then $\pbar'(s_{\gamma-1})=\gamma=\varphi'(rs_{j+1})$ and   $\pbar'(s_j)=\delta=\varphi'(rs_{\gamma})$. If $\gamma=\alpha+1$ and $\beta>\alpha$, then  $\varphi'(rs_{\gamma})=\varphi(rs_{\gamma})=\Delta \in \pbar'(s_{1})$. 
 Since $r\notin P_{s_{\gamma-1}}(\gamma,\delta, \varphi)$, $r\notin P_{s_j}(\gamma,\delta,\varphi)$ and $N_{\Delta-1}(r)$ is $\varphi$-elementary, $\pbar'(s_i)=\pbar(s_i)$ for each $i\in [1,\Delta-2]$. 
Thus under the new coloring $\varphi'$, $F^*=(r, rs_1, s_1, \ldots,s_{\gamma-1}, rs_{j+1}, s_{j+1}, \ldots, rs_\ell, s_\ell, rs_j, s_j, rs_{\gamma},  s_{\gamma}, \ldots,s_\beta)$ is a multifan, where we remove repeated elements in $F$ if $\gamma=2$ (see Table~\ref{t1} for an illustration of ``connecting'' vertices to form $F^*$ when $\gamma \le \alpha$). However, $|V(F)|<| V(F^*)|$, 
we obtain a contradiction to the maximality of $F$ as  required  in  (P1) of the definition of a pseudo-multifan.  

\begin{table}[ht]
\begin{center}
	\begin{tabular}{ m{3em}|l|l|l|l|l|l|l|l|l|l|l|l|l} 
	 & $s_1$  &   $\ldots$ & $s_{\gamma-1}$ & $s_{\gamma}$ & $\ldots$ & $s_\alpha$& $\ldots$ & $s_\beta$& \ldots & $s_j$& $s_{j+1}$ & $\ldots$ & $s_\ell$ \\
		\hline
		Missing color of $s_i$ & $2,\Delta$   &  $\ldots$ & $\gamma$ & $\gamma+1$ & $\ldots$  & $\alpha+1$& $\ldots$ & $\beta+1$& $\ldots$ & $\delta$& $\delta_1$ & $\ldots$& $\delta_\ell$ \\
		\hline
		Color of $rs_i$ &     &  $\ldots$ & $\gamma-1$ & $\gamma \color{red}{\rightarrow \delta}$ & $\ldots$ & $\alpha$   & $\ldots$ & $\beta$  &  $\ldots$& $\delta_\ell$& $\delta {\color{red}{\rightarrow \gamma}}$ & $\ldots$ & $\delta_{\ell-1}$ \\
		\hline
	\end{tabular}

	\caption{Statement (c): when $r\notin P_{s_{\gamma-1}}(\gamma,\delta)$ and $r\notin P_{s_j}(\gamma,\delta)$, the coloring changes in the neighborhood of $r$ when $2<\gamma \le \alpha$.}
	\label{t1}
\end{center}
\vspace{-0.5cm}
\end{table}

Suppose then that $r\notin P_{s_{\gamma-1}}(\gamma,\delta)$ and $r\in P_{s_j}(\gamma,\delta)$. Let $\varphi'=\varphi/P_{s_j}(\gamma,\delta)$. Similar to the case above, one can easily check that $F^*=(r, rs_1, s_1, \ldots,s_{\gamma-1}, rs_{j+1}, s_{j+1}, \ldots, rs_\ell, s_\ell, rs_j, s_j)$ is a multifan. 
Since $\pbar'(s_{\gamma-1})=\pbar'(s_j)=\gamma$, we obtain a contradiction to 
Lemma~\ref{thm:vizing-fan1} \eqref{thm:vizing-fan1a} that 
 $V(F^*)$ is $\varphi'$-elementary. 

Suppose lastly that $r\in P_{s_{\gamma-1}}(\gamma,\delta)$ and $r\notin P_{s_j}(\gamma,\delta)$. Then let $\varphi'=\varphi/P_{s_j}(\gamma,\delta)$. Note that $\varphi'$ is $(F,\varphi)$-stable.  Thus by the definition of a pseudo-multifan, $V(S)$ is $\varphi'$-elementary. But $\pbar'(s_{\gamma-1})=\pbar'(s_j)=\gamma$, giving a contradiction. This completes the proof of the first part of statement \eqref{pseudo-b}.

For the second  part of statement\eqref{pseudo-b}, assume to the contrary that $P_{y}(\gamma,\delta)$
meets $r$ before  $z$. Then $P_{y}(\gamma,\delta)$ meets $s_{j+1}$ before $r$ as $\varphi(rs_{j+1})=\delta$. Let $\varphi'$ be obtained from $\varphi$ by shift from $s_{j}$
to $s_{\ell}$. Then  $r\not\in P_{y}(\gamma,\delta, \varphi')$, showing a contradiction to 
the first part of~\eqref{pseudo-b}. 

For the first part of statement\eqref{pseudo-c}, assume to the contrary that there exists $\delta^*=\pbar(s_{j^*})$ for some $j^*\ne j$ and $j^*\in [t+1, \Delta-2]$ such that $P_{s_j}(\delta,\delta^*)\ne P_{s_{j^*}}(\delta,\delta^*)$. Then let $\varphi'=\varphi/P_{s_j}(\delta,\delta^*)$.
Note that $\varphi'$ is $(F,\varphi)$-stable, but $V(S)$ is not $\varphi'$-elementary, 
showing a contradiction to the  definition of a pseudo-multifan. 
For the second part of \eqref{pseudo-c}, 
assume that $r\not\in P_{s_j}(\delta,\delta^*)$
and the $(\delta,\delta^*)$-chain  containing $r$
is a path $Q$. By statement~\eqref{pseudo-a}, we let $s_{\ell_1}, \ldots, s_{\ell_k}$ be a rotation with $\varphi(rs_{\ell_1})=\pbar(s_{\ell_k})=\delta^*$ (note $s_{\ell_k}=s_{j^*}$). Note that the path $Q$ contains $rs_{j+1}$ and $rs_{\ell_1}$ since $\varphi(rs_{j+1})=\delta$ and $\varphi(rs_{\ell_1})=\delta^*$. So $Q-rs_{j+1}-rs_{\ell_1}$ consists of two disjoint paths, say $Q_j$ and $Q_{j^*}$, which contain $s_{j+1}$ and $s_{\ell_1}$ respectively. Let $\varphi'$ be obtained from $\varphi$ by shift from $s_{j}$ to $s_{l}$
and from $s_{\ell_1}$ to $s_{\ell_k}$ (only shift once if they are the same sequence up to permutation). Then $P_{s_{j+1}}(\delta,\delta^*,\varphi')=Q_j$. Let $\varphi^*=\varphi'/P_{s_{j+1}}(\delta, \delta^*,\varphi')$. We see that $\varphi^*$ is $(F,\varphi)$-stable, but $\delta^* \in \pbar^*(s_{j+1})\cap \pbar^*(s_{\ell_1})$, 
 giving a contradiction to the  definition of a pseudo-multifan.  \qed

\section{Proof of Theorems~\ref{Lem:2-non-adj1*} and~\ref{Lem:2-non-adj2}}

\subsection{Fundamental lemmas}
\begin{LEM}\label{Lemma:extended multifan}
Let  $(G,rs_1,\varphi)$ be a coloring-triple, 
$F:=F_\varphi(r,s_1:s_\alpha:s_\beta)$ be a typical multifan, and  $L:=(F,ru,u,ux,x)$ be a lollipop centered at $r$ such that  $\varphi(ru)=\alpha+1$ and $\pbar(x)=\alpha+1$.  Then  
	\begin{enumerate}[(a)]
		\item  $\varphi(ux)\ne 1$ and  $ux\in P_r(1,\varphi(ux))$.  \label{Evizingfan-a}
		
		If $\varphi(ux)=\tau$ 
		is a 2-inducing color with respect to $\varphi$ and $F$, then the following  holds. 
		\item Let $P_x(1,\tau)$ be the $(1,\tau)$-chain starting at $x$ in $G-rs_1-ux$. Then $P_x(1,\tau)$ ends at $r$.  \label{Evizingfan-b}
		\item For any 2-inducing color $\delta$  of $F$ with $\tau\prec \delta$, 
		we have $r\in P_{s_1}(\delta,\Delta)=P_{s_{\delta-1}}(\delta,\Delta)$. \label{Evizingfan-c}
		\item  For any $\Delta$-inducing color $\delta$ of $F$, we have $r\in P_{s_{\delta-1}}(\delta, \alpha+1 )=P_{s_{\alpha}}(\delta, \alpha+1)$, where $s_{\Delta-1}=s_1$ if $\delta=\Delta$.
		 \label{Evizingfan-d}
		 \item For any 2-inducing color $\delta$  of $F$ with $\delta\prec \tau$,  we have 
		 $r\in P_{s_\alpha}(\delta, \alpha+1)=P_{s_{\delta-1}}(\delta, \alpha+1)$. \label{Evizingfan-e}
	\end{enumerate}
\end{LEM}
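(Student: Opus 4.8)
The plan is to prove each of the five parts of Lemma~\ref{Lemma:extended multifan} by a Kempe-chain argument in the spirit of the standard multifan analysis, using the lollipop structure to exploit the fact that $x$ misses $\alpha+1$ while the edge $ru$ carries the same color $\alpha+1$.

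\textbf{Part (a).} First I would argue $\varphi(ux)\neq 1$: if $\varphi(ux)=1$, then since $1\in\pbar(r)$ and $\pbar(x)=\alpha+1$, consider the $(1,\alpha+1)$-chain. The vertex $u$ has both $1$ (on $ux$) and $\alpha+1$ (on $ru$) present, so $u$ is an interior vertex of its $(1,\alpha+1)$-chain; following it from $x$ we reach $u$ via $ux$ and leave via $ru$ toward $r$. But one can instead extend the multifan $F$ by appending $(ru,u)$ — since $\varphi(ru)=\alpha+1\in\pbar(s_\alpha)$ — to get a larger multifan $F'$, and then $\pbar(x)=\alpha+1$ together with a Kempe change on the appropriate $(1,\alpha+1)$-chain contradicts elementariness of $V(F')$ or maximality of $F$. (Here I must be careful: $F$ is only required to be typical, not maximum; so the cleaner route is to recolor $ux\colon 1\to$ (a color missing at $u$, if one exists) or directly to note that $(F,ru,u,ux,x)$ with $\varphi(ux)=1$ lets us swap to free color $1$ at $x$ and produce an edge coloring of $G$, contradicting $\chi'(G)=\Delta+1$ via Lemma~\ref{biregular}(a).) For $ux\in P_r(1,\varphi(ux))$: write $\tau=\varphi(ux)$; the chain $P_x(1,\tau)$ starts at $x$ (which misses $1$), and its first edge is $ux$. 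If this chain did not reach $r$, then $\varphi'=\varphi/P_x(1,\tau)$ keeps $\pbar'(r)=\pbar(r)=1$ and makes $\pbar'(x)\ni 1$ (colors $1$ and $\alpha+1$ both missing at $x$ after the swap, roughly), while not touching $E(F)\cup\{ru\}$; appending $ux$ to the fan centered at $r$ and uncoloring produces a $\Delta$-coloring of $G$, contradiction. So $r\in P_x(1,\tau)$, i.e.\ $ux\in P_r(1,\tau)$. Actually the precise statement $ux\in P_r(1,\varphi(ux))$ then follows because $P_r(1,\tau)$ and $P_x(1,\tau)$ are the same chain once it contains both endpoints.

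\textbf{Parts (b)--(e).} These are all refinements under the extra hypothesis $\varphi(ux)=\tau$ is $2$-inducing. For (b), the chain $P_x(1,\tau)$ in $G-rs_1-ux$ is $P_x(1,\tau)$ minus the edge $ux$; since by (a) the full chain $P_r(1,\tau)$ passes through $x$ via $ux$ and through $r$, deleting $ux$ leaves the piece from $x$ to $r$, so it ends at $r$ — this is essentially a restatement of (a) in the deleted graph, with the observation that removing $ru$... wait, $ru$ has color $\alpha+1\neq 1,\tau$, so it is not on the chain; only $ux$ needs removing. For (c), (d), (e) the strategy is: suppose the claimed linkage fails, i.e.\ $r$ is not on $P_{s_{\delta-1}}(\delta,\Delta)$ (resp.\ $P_{s_{\delta-1}}(\delta,\alpha+1)$); perform the Kempe change $\varphi'=\varphi/P_{s_{\delta-1}}(\text{the two colors})$ which, because $r\notin$ the chain, is $(F,\varphi)$-stable on all of $F$ except it changes $\pbar(s_{\delta-1})$; then either directly apply Lemma~\ref{thm:vizing-fan1}(b) / Lemma~\ref{thm:vizing-fan2} to the (possibly reorganized) multifan to get a contradiction, or, when the chain interacts with $ux$, use part (b): after the swap the $(1,\tau)$-path from $x$ no longer ends at $r$, contradicting (b). The main bookkeeping is tracking which of the colors $1$, $\tau$, $\alpha+1$, $\Delta$, $\delta$ appear on $ru$ and $ux$ and hence whether the relevant chains contain those two edges.

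\textbf{Main obstacle.} The hard part will be parts (c)--(e): the case analysis according to whether $\delta\prec\tau$ or $\tau\prec\delta$ and according to whether $\delta$ is $2$-inducing or $\Delta$-inducing, combined with verifying in each case that the Kempe change used is genuinely $(F,\varphi)$-stable (so that the multifan/elementariness machinery of Section~3 still applies) and that it moves the $(1,\tau)$-chain off $r$ so that part (b) is violated. In particular, establishing (b) robustly enough to be reused as the engine for (c)--(e) — i.e.\ that any $(F,\varphi)$-stable recoloring preserving $\varphi(ru)$ and $\varphi(ux)$ keeps $P_x(1,\tau)$ ending at $r$ — is where the real work lies; once that is in hand, (c)--(e) should each reduce to a short contradiction. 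I would also want to double-check the degenerate cases $\alpha=\beta$ (typical $2$-inducing multifan, no $\Delta$-inducing colors, so (d) is vacuous) and $\delta=\Delta$ in (d) (where $s_{\Delta-1}=s_1$ by convention).
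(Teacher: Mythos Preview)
Your plan has genuine gaps in parts (b) through (e), and a misconception in (a).

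\textbf{Part (a).} You write that ``$P_x(1,\tau)$ starts at $x$ (which misses $1$)''; but $x$ misses only $\alpha+1$, not $1$. The clean argument the paper uses for both halves of (a) is the $(1,\alpha+1)$-link between $r$ and $s_\alpha$ (Lemma~\ref{thm:vizing-fan1}(b)): if $\varphi(ux)=1$ then $P_r(1,\alpha+1)=rux$ ends at $x$, a contradiction; and if $ux\notin P_r(1,\tau)$, swap the $(1,\tau)$-chain through $u$ (which is $(F,\varphi)$-stable) to force $\varphi'(ux)=1$ and repeat. You circle around this idea but never land on it.

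\textbf{Part (b).} This is \emph{not} a restatement of (a). Part (a) tells you the full chain $P_r(1,\tau)=P_{s_{\tau-1}}(1,\tau)$ contains the edge $ux$; deleting $ux$ splits it into two pieces, and the question is whether $x$ lies on the $r$-side or the $s_{\tau-1}$-side. Your sentence ``deleting $ux$ leaves the piece from $x$ to $r$'' assumes exactly what is to be proved. The paper handles the bad orientation ($P_r$ meets $u$ before $x$) by a concrete shift-and-recolor: shift $s_\tau{:}s_\alpha$, swap on $P_{[s_\tau,u]}(1,\tau)$, then recolor $ru$ and $ux$ to make $\tau$ missing at both $r$ and $s_{\tau-1}$ in the truncated multifan.

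\textbf{Parts (c)--(e).} Your proposed engine---either derive a direct contradiction from Lemma~\ref{thm:vizing-fan1}/\ref{thm:vizing-fan2} after the swap, or observe that the swap moves the $(1,\tau)$-chain off $r$ and violate (b)---does not fire. In all three statements the Kempe change involves colors $\{\delta,\Delta\}$ or $\{\delta,\alpha+1\}$, disjoint from $\{1,\tau\}$, so the $(1,\tau)$-chain is untouched and (b) is never violated. And the swap alone merely reorganizes $F$ into another valid elementary multifan, so Lemmas~\ref{thm:vizing-fan1}/\ref{thm:vizing-fan2} give no contradiction either. The missing idea is the auxiliary coloring $\varphi^*$: using (b), swap along $P_x(1,\tau)$ in $G-rs_1-ux$ and then recolor $ru\colon\alpha+1\to\tau$ and $ux\colon\tau\to\alpha+1$. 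This produces $\pbar^*(r)=\alpha+1$ while keeping $(r,rs_1,s_1,\ldots,s_{\tau-1})$ and $(r,rs_1,s_1,rs_{\alpha+1},\ldots,s_\beta)$ as multifans. Now the hypothesized chain (which avoids $E_{ch}$ since $r\notin$ it and its colors avoid $\{1,\tau\}$) is unchanged under $\varphi^*$, and swapping it under $\varphi^*$ makes $\alpha+1$ missing at two multifan vertices---the contradiction. In short, (b) is not the engine for (c)--(e); it is the tool that builds $\varphi^*$, and $\varphi^*$ is the engine.
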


\begin{proof}
	The assertion $\varphi(ux)\ne 1$ is clear. As otherwise, 
	$P_r(1,\alpha+1,\varphi)=rux$, contradicting Lemma~\ref{thm:vizing-fan1}~\eqref{thm:vizing-fan1b} that 
 $r$ and $s_\alpha$ are $(1,\alpha+1)$-linked  with respect to $\varphi$. Let $\varphi(ux)=\tau$ and assume $ux\notin P_r(1,\tau)$. 
 By the first part of~\eqref{Evizingfan-a}, $\tau \ne 1$.  
	Let $Q$ be the $(1,\tau)$-chain containing $u$.  Then $Q$ does not start or end at any vertex of $F$, since 
	$r$ and $\varphi_F^{-1}(\tau)$ are $(1,\tau)$-linked with respect to $\varphi$ by 
	Lemma~\ref{thm:vizing-fan1}~\eqref{thm:vizing-fan1b} if $\tau \in \pbar(F)$. 
	Therefore $\varphi'=\varphi/Q$  is $(F,\varphi)$-stable. Consequently,  $r$ and $s_\alpha$ are still $(1,\alpha+1)$-linked  with respect to $\varphi'$.   However $P_r(1,\alpha+1,\varphi')=rux$ ends at $x$,  giving a contradiction, proving~\eqref{Evizingfan-a}.

	For statement~\eqref{Evizingfan-b},  by~\eqref{Evizingfan-a},  it follows that $ux\in P_r(1,\tau)$. 
	Thus $P_x(1,\tau)$ is a subpath of $P_r(1,\tau)=P_{s_{\tau-1}}(1,\tau)$. 
	So $P_x(1,\tau)$   ends at either $r$  or $s_{\tau-1}$. 
	Assume to the contrary that $P_{x}(1,\tau)$ ends at $s_{\tau-1}$. Then $P_r(1,\tau)$ meets $u$ before $x$. Since $\varphi(rs_\tau)=\tau$,  it follows that $P_{[s_\tau,u]}(1,\tau)$ does not contain any edge from the lollipop $L$. Hence we can do the following operations, which are also demonstrated in Figure~\ref{f11}.  
	\[
\begin{bmatrix}
		s_\tau:s_\alpha & P_{[s_\tau,u]}(1,\tau)  & ru & ux\\
		\text{shift} & 1/\tau &  \alpha+1 \rightarrow 1 & \tau \rightarrow \alpha+1
	\end{bmatrix}.
	\]
\begin{figure}[!htb]
	\begin{center}
		\begin{tikzpicture}[scale=0.8]
			
{\tikzstyle{every node}=[draw ,circle,fill=white, minimum size=0.5cm,
	inner sep=0pt]
	\draw[black,thick](-1,0) node[label={left: }] (r)  {$r$};
	\draw[black,thick](-3,0) node[label={above: $s_1$}] (s1) {};
	\draw[black,thick](-3.5,2.5) node[label={above: $s_2$}] (s2) {};
	\draw[black,thick](-2,3) node[label={above: $s_{\tau-1}$}] (sa) {};
	\draw[black,thick](-0.5,3) node[label={above: $s_{\tau}$}] (sa1) {};
	\draw[black,thick](1.5,2.5) node[label={above: $s_\alpha$}] (sb) {};
	\draw[black,thick](-1,-1.5) node[label={left: }] (u)  {$u$};
	\draw[black,thick](-1,-3) node[label={left: }] (x)  {$x$};
}
\path[draw,thick,black, dashed]
(r) edge node[name=la,above,pos=0.5] {\color{black}} (s1);

\path[draw,thick,black]
(r) edge node[name=la,above,pos=0.5] {\color{black}$2$} (s2)
(r) edge node[name=la,above,pos=0.5] {\color{black}\quad \tiny \,\,$\tau-1$} (sa)
(r) edge node[name=la,right,pos=0.8] {\color{black} \tiny $\tau \color{red}{\rightarrow \tau+1}$} (sa1)
(r) edge node[name=la,right,pos=0.5] {\color{black}$\alpha \color{red}{\rightarrow \alpha+1}$} (sb)
(r) edge node[name=la,left,pos=0.5] {\color{black} $\alpha+1$} (u)
(u) edge node[name=la,left,pos=0.5] {\color{black} $\tau$} (x);

{\tikzstyle{every node}=[draw ,circle,fill=black, minimum size=0.05cm,
	inner sep=0pt]
	\draw(-3.05,2.7) node (f1)  {};
	\draw(-2.65,2.9) node (f1)  {};
	\draw(-2.4,3) node (f1)  {};
	\draw(0,3) node (f1)  {};
	\draw(0.5,2.9) node (f1)  {};
	\draw(1,2.7) node (f1)  {};
} 
\draw(2,0) node (f1)  { $\Rightarrow$};

\begin{scope}[shift={(6,0)}]
{\tikzstyle{every node}=[draw ,circle,fill=white, minimum size=0.5cm,
	inner sep=0pt]
	\draw[black,thick](-1,0) node[label={left: }] (r)  {$r$};
	\draw[black,thick](-3,0) node[label={above: $s_1$}] (s1) {};
	\draw[black,thick](-3.5,2.5) node[label={above: $s_2$}] (s2) {};
	\draw[black,thick](-2,3) node[label={above: $s_{\tau-1}$}] (sa) {};
	\draw[black,thick](-0.5,3) node[label={above: $s_{\tau}$}] (sa1) {};
	\draw[black,thick](1.5,2.5) node[label={above: $s_\alpha$}] (sb) {};
	\draw[black,thick](-1,-1.5) node[label={left: }] (u)  {$u$};
	\draw[black,thick](-1,-3) node[label={left: }] (x)  {$x$};
}
\path[draw,thick,black, dashed]
(r) edge node[name=la,above,pos=0.5] {\color{black}} (s1);

\path[draw,thick,black]
(r) edge node[name=la,above,pos=0.5] {\color{black}$2$} (s2)
(r) edge node[name=la,above,pos=0.5] {\color{black}\quad \tiny \,\,$\tau-1$} (sa)
(r) edge node[name=la,right,pos=0.8] {\color{black} \tiny $\tau \color{red}{\rightarrow \tau+1}$} (sa1)
(r) edge node[name=la,right,pos=0.5] {\color{black}$\alpha \color{red}{\rightarrow \alpha+1}$} (sb)
(r) edge node[name=la,left,pos=0.5] {\color{black} $\alpha+1$} (u)
(u) edge node[name=la,left,pos=0.5] {\color{black} $\tau$} (x);
\draw[thick, red] plot [smooth, tension=2] coordinates { (-0.24,3.1) (2,2.4) (-0.7,-1.5)};
\draw(0.4,3.8) node (f1)  { $1 \color{red}{\rightarrow \tau}$};
\draw(0.3,-1.5) node (f1)  { $1 \color{red}{\rightarrow \tau}$};

\draw(2,0) node (f1)  { $\Rightarrow$};

{\tikzstyle{every node}=[draw ,circle,fill=black, minimum size=0.05cm,
	inner sep=0pt]
	\draw(-3.05,2.7) node (f1)  {};
	\draw(-2.65,2.9) node (f1)  {};
	\draw(-2.4,3) node (f1)  {};
	\draw(0,3) node (f1)  {};
	\draw(0.5,2.9) node (f1)  {};
	\draw(1,2.7) node (f1)  {};
} 
\draw(2,0) node (f1)  { $\Rightarrow$};
\end{scope}	

\begin{scope}[shift={(12,0)}]
{\tikzstyle{every node}=[draw ,circle,fill=white, minimum size=0.5cm,
	inner sep=0pt]
	\draw[black,thick](-1,0) node[label={left: }] (r)  {$r$};
	\draw[black,thick](-3,0) node[label={above: $s_1$}] (s1) {};
	\draw[black,thick](-3.5,2.5) node[label={above: $s_2$}] (s2) {};
	\draw[black,thick](-2,3) node[label={above: $s_{\tau-1}$}] (sa) {};
	\draw[black,thick](-0.5,3) node[label={above: $s_{\tau}$}] (sa1) {};
	\draw[black,thick](1.5,2.5) node[label={above: $s_\alpha$}] (sb) {};
	\draw[black,thick](-1,-1.5) node[label={left: }] (u)  {$u$};
	\draw[black,thick](-1,-3) node[label={left: }] (x)  {$x$};
}
\path[draw,thick,black, dashed]
(r) edge node[name=la,above,pos=0.5] {\color{black}} (s1);

\path[draw,thick,black]
(r) edge node[name=la,above,pos=0.5] {\color{black}$2$} (s2)
(r) edge node[name=la,above,pos=0.5] {\color{black}\quad \tiny \,\,$\tau-1$} (sa)
(r) edge node[name=la,right,pos=0.8] {\color{black} \tiny $\tau \color{red}{\rightarrow \tau+1}$} (sa1)
(r) edge node[name=la,right,pos=0.5] {\color{black}$\alpha \color{red}{\rightarrow \alpha+1}$} (sb)
(r) edge node[name=la,left,pos=0.5] {\color{black} $\alpha+1 \color{red}{\rightarrow 1}$} (u)
(u) edge node[name=la,left,pos=0.5] {\color{black} $\tau \color{red}{\rightarrow \alpha+1}$} (x);
\draw[thick, red] plot [smooth, tension=2] coordinates { (-0.24,3.1) (2,2.4) (-0.7,-1.5)};
\draw(0.4,3.8) node (f1)  { $1 \color{red}{\rightarrow \tau}$};
\draw(0.3,-1.5) node (f1)  { $1 \color{red}{\rightarrow \tau}$};


{\tikzstyle{every node}=[draw ,circle,fill=black, minimum size=0.05cm,
	inner sep=0pt]
	\draw(-3.05,2.7) node (f1)  {};
	\draw(-2.65,2.9) node (f1)  {};
	\draw(-2.4,3) node (f1)  {};
	\draw(0,3) node (f1)  {};
	\draw(0.5,2.9) node (f1)  {};
	\draw(1,2.7) node (f1)  {};
} 
\end{scope}
	\end{tikzpicture}
	\caption{Coloring operations in the proof of Lemma~\ref{Lemma:extended multifan} (b).}
	\label{f11}
	\end{center}
\end{figure}
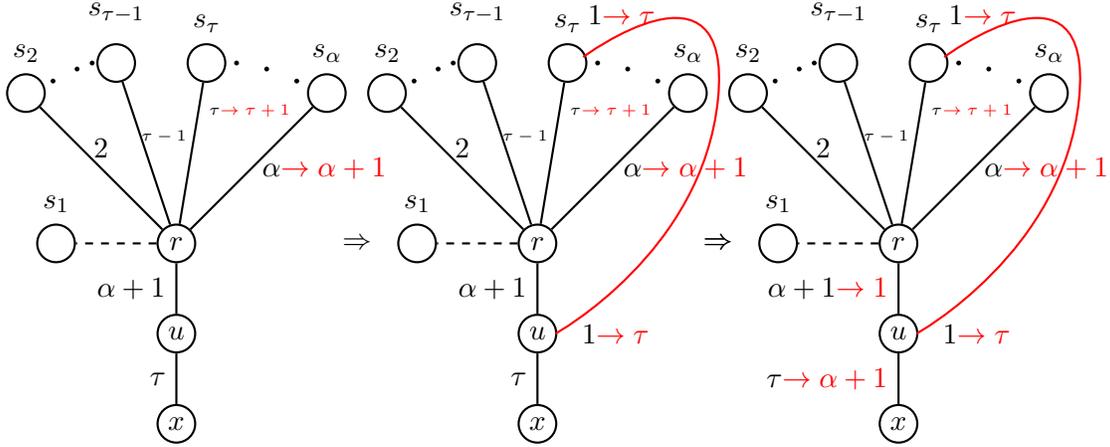

Clearly $(r,rs_1,s_1,\ldots,s_{\tau-1})$ is still a multifan under the new coloring, but $\tau$ is missing at both $r$ and $s_{\tau-1}$, showing a contradiction to Lemma~\ref{thm:vizing-fan1} \eqref{thm:vizing-fan1a}.	

Before proving the remaining statements, we introduce a new coloring $\varphi^*$ established on statement~\eqref{Evizingfan-b}. Let $\varphi^*$ be the coloring obtained from $\varphi$ by doing the following operations:
	\[
\begin{bmatrix}
		P_{x}(1,\tau)  & ru &ux\\
	1/\tau & \alpha+1 \rightarrow \tau & \tau \rightarrow \alpha+1
	\end{bmatrix},
	\]
where $P_{x}(1,\tau)$ is the $(1,\tau)$-chain starting at $x$ in $G-rs_1-ux$, which ends at $r$ by statement~\eqref{Evizingfan-b}. Let $E_{ch}=E(P_{x}(1,\tau))\cup \{ux,ur\}$. Clearly $\varphi^*$ and $\varphi$ agree on all edges in $E(G)\setminus E_{ch}$. Note that $\pbar^*(r)=\alpha+1$ and $\pbar^*(s)=\pbar(s)$ for all $s\in V(F)\setminus\{r\}$, and both $(r,rs_1,s_1,rs_2,s_2\ldots,s_{\tau-1})$ and $(r,rs_1,s_1,rs_{\alpha+1},s_{\alpha+1},\ldots,s_{\beta})$ are multifans with respect to $\varphi^*$. These properties will be frequently used in the following proof.

%

%
%

Now for the statement~\eqref{Evizingfan-c},  we have 
 $P_{s_1}(\delta,\Delta,\varphi)=P_{s_{\delta-1}}(\delta,\Delta,\varphi)$ by Lemma~\ref{thm:vizing-fan2}~\eqref{thm:vizing-fan2-a}.
 Assume to the contrary that $r\not\in P_{s_1}(\delta,\Delta,\varphi)$. Since $\{\delta,\Delta\}\cap \{1,\tau\}=\emptyset$, $P_{s_1}(\delta,\Delta,\varphi)=P_{s_{\delta-1}}(\delta,\Delta,\varphi)$ does not contain any edge from $E_{ch}$. Thus $P_{s_1}(\delta,\Delta, \varphi^*)=P_{s_{\delta-1}}(\delta,\Delta,\varphi^*)=P_{s_1}(\delta,\Delta, \varphi)$, and so $r\notin P_{s_1}(\delta,\Delta, \varphi^*)$. Let $\varphi'=\varphi^*/P_{s_1}(\delta,\Delta, \varphi^*)$. Then $\delta\in \pbar'(s_1)$ and $(r,rs_1,s_1,rs_{\delta},s_{\delta},\ldots,s_{\alpha})$ is a multifan under $\varphi'$.  However $\alpha+1$ is missing at both $r$ and $s_{\alpha}$, giving a contradiction to Lemma~\ref{thm:vizing-fan1} \eqref{thm:vizing-fan1a}.


For  statement~\eqref{Evizingfan-d},  we have 
$P_{s_{\delta-1}}(\alpha+1, \delta,\varphi)=P_{s_{\alpha}}(\alpha+1, \delta,\varphi)$ by Lemma~\ref{thm:vizing-fan2} \eqref{thm:vizing-fan2-a}.
Assume to the contrary that $r\not\in P_{s_{\delta-1}}(\alpha+1, \delta,\varphi)$. Since $\{\alpha+1, \delta\}\cap \{1,\tau\}=\emptyset$, $P_{s_\alpha}(\alpha+1, \delta,\varphi)=P_{s_{\delta-1}}(\alpha+1, \delta,\varphi)$ does not contain any edge from $E_{ch}$. Thus $P_{s_\alpha}(\alpha+1, \delta,\varphi^*)=P_{s_{\delta-1}}(\alpha+1, \delta,\varphi^*)=P_{s_{\delta-1}}(\alpha+1, \delta,\varphi)$, and so $r\notin P_{s_{\delta-1}}(\alpha+1, \delta,\varphi^*)$. 
This gives a contradiction to Lemma~\ref{thm:vizing-fan1}~\eqref{thm:vizing-fan1b} that 
$r$ and $s_{\delta-1}$ are $(\alpha+1, \delta)$-linked with respect to $\varphi^*$, since 
$(r,rs_1,s_1,rs_{\alpha+1},s_{\alpha+1},\ldots,s_{\beta})$ is a multifan under $\varphi^*$.


For  statement~\eqref{Evizingfan-e},  if it fails then we have that either  $P_{s_\alpha}(\delta,\alpha+1,\varphi)\ne P_{s_{\delta-1}}(\delta,\alpha+1,\varphi)$ or 
$P_{s_\alpha}(\delta,\alpha+1,\varphi)=P_{s_{\delta-1}}(\delta,\alpha+1,\varphi)$ but 
$r\not\in P_{s_\alpha}(\delta,\alpha+1,\varphi)$. 
For the first case,  $r\in P_{s_\alpha}(\delta,\alpha+1,\varphi)$ by Lemma~\ref{thm:vizing-fan2}~\eqref{thm:vizing-fan2-b} and so $r\not\in P_{s_{\delta-1}}(\delta,\alpha+1,\varphi)$. 
Therefore  $r\not\in P_{s_{\delta-1}}(\delta,\alpha+1,\varphi)$ in both cases. 
Consequently, $E(P_{\delta-1}(\delta,\alpha+1,\varphi))\cap E_{ch}=\emptyset$.
Hence,  $P_{s_{\delta-1}}(\delta,\alpha+1,\varphi^*)= P_{s_{\delta-1}}(\delta,\alpha+1,\varphi)$
and $r\not\in P_{s_{\delta-1}}(\delta,\alpha+1,\varphi^*)$.  This gives a contradiction, since 
under $\varphi^*$, $(r,rs_1,s_1,rs_2,s_2\ldots,s_{\tau-1})$ is a multifan, and so $r$ and $s_{\delta-1}$ should be $(\delta,\alpha+1)$-linked by Lemma~\ref{thm:vizing-fan1}~\eqref{thm:vizing-fan1b}.  This finishes  the proof of statement~\eqref{Evizingfan-e}.  \end{proof}

Let  $(G,rs_1,\varphi)$ be a coloring-triple. 
For a color $\alpha \in [1,\Delta]$, a sequence of 
{\it Kempe  $(\alpha,*)$-changes}  is a sequence of  
Kempe changes that each involve the exchanging of the color $\alpha$
with another color from $[1,\Delta]$.  

	\begin{LEM}\label{Lemma:pseudo-fan0}
	Let  $(G,rs_1,\varphi)$ be a coloring-triple, 
	$F:=F_\varphi(r,s_1:s_\alpha:s_\beta)$ be a typical multifan, and  $L:=(F,ru,u,ux,x)$ be a lollipop centered at $r$ such that  $\varphi(ru)=\alpha+1$.   Then for   $w_1\in \{s_{\beta+1}, \ldots, s_{\Delta-2} \}$ with $\varphi(rw_1)=\tau_1\in [\beta+2, \Delta-1]$,  the following statements hold.
		\begin{enumerate}[(1)] 
			\item \label{only-fan}	
			If exists a vertex $w\in V(G)\setminus (V(F)\cup \{w_1\})$ such that $w\in P_r(1,\tau_1,\varphi')$
			for every {\bf $(F,\varphi)$-stable} $\varphi' \in \CC^{\Delta}(G-rs_1)$ with $\varphi'(ru)=\alpha+1$,   
					then there exists a sequence of distinct vertices $w_1, \ldots, w_t\in  \{s_{\beta+1}, \ldots, s_{\Delta-2} \}$ satisfying the following conditions: 
		\begin{enumerate}[(a)]
			\item  $\varphi(rw_{i+1})=\pbar(w_i)\in  [\beta+2, \Delta-1]$ for each $i\in [1,t-1]$;
						 \label{Lemma:pseudo-fan0-a}
			\item  $r$ and $w_i$  are $(1,\pbar(w_i))$-linked with respect to $\varphi$ for each $i\in[1,t]$; \label{Lemma:pseudo-fan0-b}
			\item $\pbar(w_t)=\tau_1$. \label{Lemma:pseudo-fan0-c}
				\end{enumerate}
			\item \label{fan-and-x}
			If  $\pbar(x)=\alpha+1$ and there exists a vertex $w\in V(G)\setminus (V(F)\cup \{w_1\})$ such that $w\in P_r(1,\tau_1,\varphi')$
for every  {\bf $(L,\varphi)$-stable} $\varphi' \in \CC^{\Delta}(G-rs_1)$  obtained from $\varphi$ 
through a sequence of Kempe  $(1,*)$-changes  not using  $r$
or $x$ as endvertices, 
then there exists a sequence of distinct vertices $w_1, \ldots, w_t\in  \{s_{\beta+1}, \ldots, s_{\Delta-2} \}$ satisfying the following conditions: 
	\begin{enumerate}[(a)]
	\item  $\varphi(rw_{i+1})=\pbar(rw_i)\in  [\beta+2, \Delta-1]$  for each $i\in [1,t-1]$;
	\label{Lemma:pseudo-fan0-a1}
	\item $r$ and $w_i$   are $(1,\pbar(w_i))$-linked with respect to $\varphi$ for each $i\in[1,t-1]$; \label{Lemma:pseudo-fan0-b1}
	\item $\pbar(w_t)=\tau_1$ or $\pbar(w_t)=\alpha+1$.  If $\pbar(w_t)=\tau_1$,
	then $w_t$ and $r$ are $(1,\tau_{1})$-linked with respect to $\varphi$. \label{Lemma:pseudo-fan0-c1}
\end{enumerate}
	\end{enumerate}
\end{LEM}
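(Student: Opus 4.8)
Both parts come from one greedy construction of a sequence $w_1,w_2,\dots$; part~(2) only restricts the family of admissible recolourings, and this is exactly what forces the extra terminal possibility $\pbar(w_t)=\alpha+1$. Write $v$ for the second $\Delta$-neighbour of $r$ and set $\delta_0:=\tau_1$. Since $\pbar(r)=\{1\}$, the colours of $E(F)\cup\{ru\}$ form $\{2,\dots,\beta\}\cup\{\Delta\}$ while $\varphi(rv),\varphi(rs_{\beta+1}),\dots,\varphi(rs_{\Delta-2})$ exhaust $\{\beta+1,\dots,\Delta-1\}$; in particular $\pbar(F)\cap[\beta+2,\Delta-1]=\emptyset$ and $\alpha+1\notin[\beta+2,\Delta-1]$. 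Starting from the given $w_1$, suppose distinct $w_1,\dots,w_i\in\{s_{\beta+1},\dots,s_{\Delta-2}\}$ have been found with $\varphi(rw_1)=\tau_1$, with $\varphi(rw_{j+1})=\pbar(w_j)=:\delta_j\in[\beta+2,\Delta-1]$ for $j<i$, and with $r$ and $w_j$ being $(1,\delta_j)$-linked for $j<i$. Put $\delta_i:=\pbar(w_i)$. If $\delta_i=\tau_1$ (part~(1)), or $\delta_i\in\{\tau_1,\alpha+1\}$ (part~(2)), stop and take $t:=i$; conditions (a)--(c) then follow from the invariant. Otherwise we must show $r$ and $w_i$ are $(1,\delta_i)$-linked and that the construction extends.

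\textbf{Linkage.} The claim ``$r$ and $w_i$ are $(1,\delta_i)$-linked'' is the engine of the proof. First $\delta_i\neq 1$: otherwise $w_i$ has no edge coloured $1$, so for $i=1$ we get $P_r(1,\tau_1,\varphi)=rw_1$, contradicting $w\in P_r(1,\tau_1,\varphi)\setminus(V(F)\cup\{w_1\})$, while for $i\geq 2$ the chain $P_r(1,\delta_{i-1},\varphi)$ equals $rw_i$ although by the invariant it must reach $w_{i-1}\neq w_i$. Now suppose $r$ and $w_i$ are $(1,\delta_i)$-unlinked and perform the $(1,\delta_i)$-swap at $w_i$. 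Its chain avoids $r$, so it recolours no edge of $E(F)\cup\{ru\}$ (all incident to $r$); by Lemma~\ref{thm:vizing-fan1}(b) (and, for $ux$, Lemma~\ref{Lemma:extended multifan}(a)) it also fixes the missing colour of every vertex of $F$ (resp.\ of $L$) and of course $\varphi(ru)=\alpha+1$; in part~(2) it is a Kempe $(1,\delta_i)$-change whose chain has neither $r$ nor $x$ as an endvertex, since $\pbar(x)=\alpha+1\neq\delta_i$ (the case $\delta_i=\alpha+1$ being the excluded halting case). In the new colouring $w_i$ misses $1$, so $P_r(1,\delta_{i-1})$ collapses to $rw_i$; hence $r$ and $w_{i-1}$ are now $(1,\delta_{i-1})$-unlinked, and we may repeat the swap at $w_{i-1}$, then at $w_{i-2}$, \dots, finally at $w_1$ (for $i=1$ the single swap already suffices). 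The colouring $\varphi'$ obtained at the end is admissible and has $P_r(1,\tau_1,\varphi')=rw_1$, so $w\notin P_r(1,\tau_1,\varphi')$ --- contradicting the hypothesis. Thus $r$ and $w_i$ are $(1,\delta_i)$-linked.

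\textbf{Extension.} Once $r$ and $w_i$ are $(1,\delta_i)$-linked, the elementary fact that a two-colour chain meeting a vertex missing one of the two colours is a path with that vertex as an endpoint (so it has at most two such endpoints) forces $\delta_i\notin\pbar(F)$ --- else $r$, some $s_\ell\in V(F)$, and $w_i$ would be three distinct endpoints of one $(1,\delta_i)$-chain --- hence $\delta_i\in[\beta+2,\Delta-1]$ and the edge of $r$ coloured $\delta_i$ is $rw_{i+1}$ with $w_{i+1}\in\{v,s_{\beta+1},\dots,s_{\Delta-2}\}$. The same argument applied to $w_i$ and $w_{k-1}$ rules out $w_{i+1}\in\{w_2,\dots,w_i\}$, and $w_{i+1}=w_1$ would give $\delta_i=\tau_1$, already handled. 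Finally $w_{i+1}\neq v$: if $\varphi(rv)=\delta_i$, then, using that $r$ and $w_i$ are $(1,\delta_i)$-linked, shifting the partial rotation $w_1,\dots,w_i$ towards $v$ and performing the induced Kempe changes produces an admissible $\varphi'$ with $w\notin P_r(1,\tau_1,\varphi')$, once more a contradiction. Hence $w_{i+1}\in\{s_{\beta+1},\dots,s_{\Delta-2}\}\setminus\{w_1,\dots,w_i\}$ and the construction continues; since this set is finite it must terminate at a halting case, yielding the sequence and conditions (a)--(c). For part~(2) the only changes are that the halting case $\delta_i=\alpha+1$ is now permitted --- it arises precisely when $w_i,s_\alpha,x$ all miss $\alpha+1$, so $r$ and $w_i$ need not be linked and the swap-cascade is blocked because its first chain may terminate at $x$ --- and that all Kempe changes above are of the prescribed type.

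\textbf{Main obstacle.} The work lies in two places. First, keeping the swap-cascade (and the $w_{i+1}\neq v$ argument) admissible: at each stage one must verify that the running colouring stays $(F,\varphi)$-stable (resp.\ $(L,\varphi)$-stable and reachable by $(1,*)$-changes avoiding $r$ and $x$ as endvertices) with $\varphi(ru)$ fixed, which amounts to tracking exactly which edges each Kempe change touches and repeatedly invoking Lemma~\ref{thm:vizing-fan1}(b) and Lemma~\ref{Lemma:extended multifan}(a) to keep $r$, the vertices of $F$, and $x$ off the relevant chains. Second, the exclusion $w_{i+1}\neq v$: because $v$ is a $\Delta$-vertex it admits no local recolouring, so one cannot simply ``continue through $v$'', and the rerouting of the partial rotation near $v$ that produces the contradiction must be orchestrated with care. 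I expect this last step to be the hardest.
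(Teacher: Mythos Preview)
Your approach is essentially the paper's: greedily extend $w_1,w_2,\dots$, and whenever the halting condition fails, establish the linkage of $r$ and $w_i$ by the swap cascade (Kempe $(1,\delta_i)$-swap at $w_i$, which collapses $P_r(1,\delta_{i-1})$ to $rw_i$, then swap at $w_{i-1}$, and so on down to $w_1$, until $P_r(1,\tau_1)$ collapses to $rw_1$ and contradicts the hypothesis on $w$). Your stability bookkeeping --- invoking Lemma~\ref{thm:vizing-fan1}(\ref{thm:vizing-fan1b}) to keep the fan vertices off the chains, and for part~(2) using Lemma~\ref{Lemma:extended multifan}(\ref{Evizingfan-a}) together with $\delta_j\ne\alpha+1$ to protect $ux$ and to keep $x$ from being an endvertex --- matches the paper's argument exactly.

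The one real issue is the exclusion $w_{i+1}\ne v$. Your proposed fix, ``shift the partial rotation $w_1,\dots,w_i$ towards $v$'', does not work as written: a shift would recolour $rw_i$ with $\delta_i=\varphi(rv)$, creating two $\delta_i$-edges at $r$, and since $v$ is a $\Delta$-vertex it is never the endvertex of any Kempe chain, so there is no admissible $(1,*)$-change that absorbs the conflict. You should know, however, that the paper does \emph{not} address this step either: after showing $\tau_{k+1}\in[\beta+2,\Delta-1]\setminus\{\tau_1,\dots,\tau_k\}$ it simply writes ``there exists $w_{k+1}\in N_{\Delta-1}(r)$ such that $\varphi(rw_{k+1})=\tau_{k+1}$'' and proceeds to use $\pbar(w_{k+1})$. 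That assertion is justified whenever $\varphi(rv)\in\pbar(F)$ (for instance $\varphi(rv)=\beta+1$, which holds automatically if $F$ is a maximum multifan), but the lemma as stated does not impose this. So the step you flag as hardest is in fact left implicit in the paper; your instinct to isolate it is correct, but your sketched resolution is wrong, and you should not expect more here than to follow the paper in assuming the $\tau_{k+1}$-edge at $r$ goes to a $(\Delta-1)$-neighbour.
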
 
\begin{proof}
	We show~\eqref{only-fan} and \eqref{fan-and-x} simultaneously.  Let $\pbar(w_1)=\tau_2$.
	For \eqref{fan-and-x}, we may assume that $\tau_2\ne \alpha+1$, as otherwise, $w_1$
	is the desired sequence for it.  Note that $\tau_2\ne 1$, since otherwise $w\notin P_r(1,\tau_1)=rw_1$, giving a contradiction.
	We claim that $w_1$ satisfies $(a)$ and $(b)$ in  \eqref{only-fan}. 
	If  $(a)$ fails, then $\tau_2\in [1,\Delta]\setminus[\beta+2, \Delta-1]=\pbar(F)$. In this case we know that $r$ and $\mathit{\pbar^{-1}_F(\tau_2)}$ are $(1,\tau_2)$-linked by Lemma~\ref{thm:vizing-fan1}~\eqref{thm:vizing-fan1b}; if $(b)$ fails, then $w_1$ and $r$ are $(1,\tau_2)$-unlinked. In both cases, we have $w_1$ and $r$ are $(1,\tau_2)$-unlinked. Now let $\varphi'=\varphi/P_{w_1}(1,\tau_2)$.  
	Clearly, $\varphi'$ is $(F,\varphi)$-stable. Since 
	$r$ and $s_\alpha$ are $(1,\alpha+1)$-linked 
	with respect to $\varphi$ by Lemma~\ref{thm:vizing-fan1}~\eqref{thm:vizing-fan1b}
	and $\varphi(ru)=\alpha+1$, 
	we have  $\varphi'(ru)=\varphi(ru)$.
	For ~\eqref{only-fan}, we already achieve a contradiction
	since $\pbar'(w_1)=1$ implies 
	$w\not\in P_r(1,\tau_1,\varphi')=rw_1$. 
	For~\eqref{fan-and-x},  since  $\tau_2\ne \alpha+1$,  $\pbar'(x)=\pbar(x)$. 
	By Lemma~\ref{Lemma:extended multifan}~\eqref{Evizingfan-a},
	the color $\varphi(ux)$ on $ux$ will keep unchanged under 
	any Kempe $(1,*)$-change not using $r$ or $x$ as endvertices. 
	Thus, $\varphi'$ is $(L,\varphi)$-stable. We again reach a contradiction 
	since  $w\notin P_r(1,\tau_1,\varphi')=rw_1$.

	Now $w_1$ is a sequence that satisfies $(a)$ and $(b)$ in~\eqref{only-fan} and so in~\eqref{fan-and-x}.  
	 Let $w_1, \ldots, w_k$ be a longest sequence of vertices from $ \{s_{\beta+1}, \ldots, s_{\Delta-2} \}$ that satisfies $(a)$ and $(b)$ in \eqref{only-fan}. 
	 	 Let $\pbar(w_i)=\tau_{i+1}$ for each $i\in [1,k]$. 
	 	 We are done if $\pbar(w_k)=\tau_1$. 
	 Thus  $\pbar(w_k)=\tau_{k+1} \ne \tau_1$. 
	 Since the sequence satisfies statement (1) (a),  we have $\tau_{k+1}\in [\beta+2, \Delta-1]$. Since  $w_i$ is $(1,\tau_{i+1})$-linked with $r$ for each $i\in [1,k]$, we know  $\tau_{k+1}\notin \{\tau_1,\tau_2,\ldots,\tau_{k}\}$. Thus, there exists $w_{k+1}\in N_{\Delta-1}(r)$ such that $\varphi(rw_{k+1})=\tau_{k+1}$.   Let $\pbar(w_{k+1})=\tau_{k+2}$.
	 By the maximality of the sequence $w_1,\ldots, w_k$, 
	 either  $\pbar(w_{k+1})\in \pbar(F)$ or $\pbar(w_k)\in [\beta+2, \Delta-1]$ and $w_{k+1}$ and $r$ are $(1,\tau_{k+2})$-unlinked.  
	 In both cases,  
	 $w_{k+1}$ and $r$ are $(1,\tau_{k+2})$-unlinked.  
	 Again,  for \eqref{fan-and-x},  we assume  $\pbar(w_{k+1}) \ne \alpha+1$. 
	 Thus, we assume that we are  proving \eqref{only-fan} and proving \eqref{fan-and-x}
	under the assumption that $\pbar(w_{k+1})\ne \alpha+1$. 
		In both cases,  let $\varphi_0=\varphi$, we 
	do a sequence of Kempe changes around $r$ from $w_{k+1}$ to $w_1$ as below:
	$$\varphi_j=\varphi_{j-1}/P_{w_{k+1-(j-1)}}(1,\tau_{k+2-(j-1)}, \varphi_{j-1}) \quad \text{for each $j\in [1,k+1]$}.$$
	Note that $$P_{r}(1,\tau_{k+1-(j-1)},\varphi_j)=rw_{k+1-(j-1)} \quad \text{for each $j\in [1,k+1]$}. $$
	
	 Clearly,  $\varphi_{k+1}$ is obtained from $\varphi$ 
	through a sequence of   Kempe $(1,*)$-changes  not using $r$ as endvertices. For \eqref{only-fan},  
	$\varphi_{k+1}$ is $(F,\varphi)$-stable such that $\varphi_{k+1}(ru)=\varphi(ru)$. 
	For the case of proving~\eqref{fan-and-x},   as we assumed  $\pbar(w_{k+1})\ne \alpha+1$,  each of the   Kempe $(1,*)$-changes  from the sequence did not use $x$ as an endvertex. Thus we still have  
  $\pbar_{k+1}(x)=\pbar(x)=\alpha+1$.   As a consequence, 
		by Lemma~\ref{Lemma:extended multifan}~\eqref{Evizingfan-a},  	the color $\varphi(ux)$ on $ux$ is kept unchanged.  
	Therefore $\varphi_{k+1}$
	is $(L,\varphi)$-stable in the case of proving~\eqref{fan-and-x}. 
	However, in both cases, 
	$w\not\in P_r(1,\tau_1,\varphi_{k+1})=rw_1$.  
	This gives a contradiction to the assumptions  in~\eqref{only-fan} and \eqref{fan-and-x}. 
\end{proof}
	
	 By the definition,  $w_1, \ldots, w_t$ in Lemma~\ref{Lemma:pseudo-fan0}~\eqref{only-fan} and in the case of  Lemma~\ref{Lemma:pseudo-fan0}~\eqref{fan-and-x} when $\pbar(w_t)=\tau_1$ form a rotation
	 with the additional property that $\pbar(w_i) \in [\beta+2,\Delta-1]$ and $r$ and $w_i$ are $(1,\pbar(w_i))$-linked for each $i\in[1,t]$. We call such a rotation a \emph{stable rotation}. 
	 In the case of  Lemma~\ref{Lemma:pseudo-fan0}~\eqref{fan-and-x} when $\pbar(w_t)=\alpha+1$,
	 we call $w_1, \ldots, w_t$ a \emph{near stable rotation}. 

\subsection{Proof of Theorem~\ref{Lem:2-non-adj1*}}

\begin{THM3}
	Let  $(G,rs_1,\varphi)$ be a coloring-triple, 
$F:=F_\varphi(r,s_1:s_\alpha:s_\beta)$ be a typical multifan, and  $L:=(F,ru,u,ux,x)$ be a lollipop centered at $r$.  If $\varphi(ru)=\alpha+1$, $\pbar(x)=\alpha+1$, 
and $\varphi(ux)=\Delta$, 
then  the following two statements hold.
\begin{enumerate}[(1)]
\item If $u\sim s_1$, then $\varphi(us_1)$ is a $\Delta$-inducing color of $F$. 
\item If $u\sim s_\alpha$, then $\varphi(us_\alpha)$ is a $\Delta$-inducing color of $F$.
\end{enumerate}
\end{THM3}

\begin{proof}
	Assume to the contrary that the statements fail. We 
	naturally have two cases. 
	
	\smallskip 
	
	{\noindent \bf Case 1: $u\sim s_1$ and $\varphi(us_1)$ is not a $\Delta$-inducing color}.  
					Let $\varphi(us_1)=\tau$. 	Note that $\tau\ne 2, \alpha+1, \Delta$. We first show that $us_1$ can not be 1 under any $(L,\varphi)$-stable coloring.
	\smallskip


	\begin{CLA}\label{not1}
For every  $(L,\varphi)$-stable $\varphi^*\in \CC^\Delta(G-rs_1)$, it holds that $\varphi^*(us_1)\ne 1$.
Furthermore, if 
 $\varphi^*(us_1)=\varphi(us_1)=\tau$,   then $us_1\in P_r(1,\tau, \varphi^*)$. 
	\end{CLA}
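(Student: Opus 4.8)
The plan is to prove Claim~\ref{not1} by assuming there is an $(L,\varphi)$-stable coloring $\varphi^*$ with $\varphi^*(us_1)=1$ and deriving a contradiction with the maximality of $F$ (condition (P1)-type reasoning via Lemma~\ref{thm:vizing-fan1}). The key observation is that, since $\varphi^*$ is $(L,\varphi)$-stable, we still have $\varphi^*(ru)=\alpha+1$, $\pbar^*(x)=\alpha+1$, $\varphi^*(ux)=\Delta$, and $F$ is still a typical multifan under $\varphi^*$ with the same missing/present colors on its vertices. If $1\in\varphi^*(u)$ on the edge $us_1$, then because $\pbar^*(r)=1$ and $1\in\pbar^*(s_1)$ is \emph{not} true ($\pbar^*(s_1)=\{2,\Delta\}$), I would look instead at whether $s_1$ now has color $1$ used at it. Actually the cleanest route: consider the edge $us_1$ colored $1$ together with the fact that $\alpha+1\in\pbar^*(x)$ and $\varphi^*(ux)=\Delta$. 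I would build an augmented multifan at $r$ that includes $u$ and then $s_1$ reached through $u$: since $\varphi^*(ru)=\alpha+1\in\pbar^*(s_\alpha)$ and $\varphi^*(us_1)=1\in\pbar^*(r)$, one can attempt a Kempe change on $P_{s_1}(1,\cdot)$ or directly recolor to shrink colors at $r$, contradicting that $r$ misses only $1$ while $G$ is $\Delta$-critical (Lemma~\ref{biregular}). The precise contradiction I expect is via Lemma~\ref{thm:vizing-fan1}\eqref{thm:vizing-fan1b}: $r$ and $s_1$ must be $(1,2)$- and $(1,\Delta)$-linked, but having $us_1$ colored $1$ lets us reroute and break one of these linkages after an appropriate swap, violating $\varphi^*$-elementariness of $V(F)$.

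For the ``furthermore'' part, I would assume $\varphi^*(us_1)=\tau$ (the same $\tau=\varphi(us_1)$) but $us_1\notin P_r(1,\tau,\varphi^*)$, and again derive a contradiction. Here the idea mirrors Lemma~\ref{Lemma:extended multifan}\eqref{Evizingfan-a}: let $Q$ be the $(1,\tau)$-chain through $u$ (which, since $us_1\notin P_r(1,\tau,\varphi^*)$, does not reach $r$ along the edge $us_1$); I need to check that $Q$ avoids all edges and vertices of $F$ where it matters, so that $\varphi^{**}=\varphi^*/Q$ is still $(F,\varphi)$-stable. The subtlety is that $Q$ might pass through $r$ or through some $s_i$; I would argue that because $r$ and each $\pbar_F^{-1}$-vertex are $(1,\tau)$-linked (Lemma~\ref{thm:vizing-fan1}\eqref{thm:vizing-fan1b}) and $\tau\notin\pbar(F)$ is \emph{not} guaranteed — actually $\tau\ne 2,\alpha+1,\Delta$ but $\tau$ could be another $2$-inducing color of $F$, so I must split into the subcase $\tau\in\pbar(F)$ and $\tau\notin\pbar(F)$. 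In the subcase $\tau\in\pbar(F)$, $P_r(1,\tau,\varphi^*)=P_{\pbar_F^{-1}(\tau)}(1,\tau,\varphi^*)$ is forced, and $us_1\notin P_r(1,\tau,\varphi^*)$ means the chain through $u$ is disjoint from it; swapping it keeps $F$ intact but creates the configuration $P_r(1,\alpha+1,\varphi^{**})=rux$, contradicting that $r$ and $s_\alpha$ are $(1,\alpha+1)$-linked. In the subcase $\tau\notin\pbar(F)$, the chain through $u$ can only meet $F$ at $r$ (since only $r$ misses $1$ and no $s_i$ misses $\tau$), and $us_1\notin P_r(1,\tau,\varphi^*)$ forces disjointness from $r$ too, so the swap is $(L,\varphi)$-stable — but then $\varphi^{**}(us_1)=1$, contradicting the first part of this very claim.

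I would organize the proof as: first establish $\tau\ne 2,\alpha+1,\Delta$ (immediate from $\pbar(s_1)=\{2,\Delta\}$, $\varphi(ru)=\alpha+1$, $\pbar(r)=1$, and $\varphi(ux)=\Delta$ with edge-coloring properness at $u$ and $s_1$); then prove $\varphi^*(us_1)\ne 1$; then use that to prove the ``furthermore'' statement, bootstrapping the contradiction through the already-proved first part. The main obstacle I anticipate is the careful bookkeeping of \emph{which} colors the chain $Q$ (or the various Kempe chains) can contain, and verifying that the resulting coloring is genuinely $(F,\varphi)$- or $(L,\varphi)$-stable so that the hypotheses of the ambient theorem and of Lemma~\ref{thm:vizing-fan1}/Lemma~\ref{Lemma:extended multifan} remain applicable; in particular, ensuring the Kempe changes do not disturb $\varphi^*(ru)=\alpha+1$ or $\varphi^*(ux)=\Delta$ (which follows from Lemma~\ref{Lemma:extended multifan}\eqref{Evizingfan-a} provided the swaps do not use $r$ or $x$ as endvertices) is the delicate point. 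A secondary obstacle is handling the degenerate cases $\beta=\alpha$ (typical $2$-inducing multifan) and small $\Delta$, where some of the indexing collapses, but those should be routine once the main argument is in place.
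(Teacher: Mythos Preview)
Your overall strategy for the ``furthermore'' part---swap on the $(1,\tau)$-chain through $us_1$ to reduce to the situation $\varphi^{**}(us_1)=1$, then invoke the first part---is correct and is exactly what the paper does (the paper treats both parts uniformly by first passing to a coloring $\varphi'$ with $\varphi'(us_1)=1$). Your case split on whether $\tau\in\pbar(F)$ is unnecessary: in either subcase the swap is $(L,\varphi)$-stable, since the only possible endpoints of a $(1,\tau)$-chain in $V(L)$ are $r$ and (if $\tau\in\pbar(F)$) $s_{\tau-1}$, both of which lie on $P_r(1,\tau,\varphi^*)\ne Q$, while $\varphi^*(ux)=\Delta$ and $\varphi^*(ru)=\alpha+1$ are untouched. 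Note also that your stated contradiction in the $\tau\in\pbar(F)$ subcase, namely $P_r(1,\alpha+1,\varphi^{**})=rux$, is simply false: $\varphi^{**}(ux)=\Delta\notin\{1,\alpha+1\}$, so $ux$ is not on that chain; the path instead goes $r\!-\!u\!-\!s_1\!-\!\cdots$ and there is no contradiction there. The right move in that subcase is the same bootstrap to the first part.

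The genuine gap is in the first part. You correctly guess that the contradiction should come from breaking the $(1,\Delta)$-linkage of $r$ and $s_1$, but ``after an appropriate swap'' is doing all the work and you never name the swap. Having $\varphi^*(us_1)=1$ alone breaks nothing: $P_{s_1}(1,\Delta,\varphi^*)$ begins $s_1\!-\!u\!-\!x$ but then continues past $x$ (since $\pbar^*(x)=\alpha+1$, not $1$ or $\Delta$), and nothing prevents it from eventually reaching $r$. The missing step, which the paper supplies, is to first do a $(1,\alpha+1)$-swap at $x$: because $r$ and $s_\alpha$ are $(1,\alpha+1)$-linked (Lemma~\ref{thm:vizing-fan1}\eqref{thm:vizing-fan1b}) and $us_1\in P_r(1,\alpha+1,\varphi^*)$, the chain $P_x(1,\alpha+1,\varphi^*)$ avoids both $r$ and $us_1$, so the resulting $\varphi''$ is $(F,\varphi)$-stable with $\varphi''(us_1)=1$ and now $\pbar''(x)=1$. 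Only then does $P_{s_1}(1,\Delta,\varphi'')=s_1ux$ terminate at $x\ne r$, yielding the contradiction. Without identifying this swap your argument for the first part does not close, and since the second part bootstraps from the first, the whole proof is incomplete.
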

\proof[Proof of Claim~\ref{not1}] 
Suppose instead  that $\varphi^*(us_{1})=1$ for the first part,  
and $us_{1}\notin P_r(1,\tau, \varphi^*)$  for the second part.  Let $\varphi'=\varphi^*$ in the former case and let $\varphi'=\varphi^*/Q$ in the latter case,  where $Q$ is the $(1,\tau)$-chain containing $us_1$. In both cases $\varphi'(us_1)=1$ and so $us_1\in P_r(1,\alpha+1,\varphi')$. Clearly $\varphi'$ is $(F,\varphi)$-stable. Since $P_r(1,\alpha+1,\varphi')= P_{s_\alpha}(1,\alpha+1,\varphi')$ by Lemma~\ref{thm:vizing-fan1}~\eqref{thm:vizing-fan1b}, $P_x(1,\alpha+1,\varphi')$ does not contain $r$. Thus $\varphi''=\varphi'/P_x(1,\alpha+1,\varphi')$ is $(F,\varphi^*)$-stable and $\varphi''(us_1)=\varphi'(us_1)=1$.  However,  $P_{s_1}(1,\Delta,\varphi'')=s_1ux$, contradicting Lemma~\ref{thm:vizing-fan1}~\eqref{thm:vizing-fan1b} that $s_1$ and $r$ are $(1,\Delta)$-linked with respect to $\varphi''$.  
\qed 

By Claim~\ref{not1}, we have either $\tau\in \pbar(F)$ is $2$-inducing or 
$\tau \in [\beta+2,\Delta-1]$. 

\smallskip 
{\noindent \bf Subcase 1.1: $\tau\in \pbar(F)$ is $2$-inducing}.
\smallskip

By Claim~\ref{not1},  $us_1\in P_r(1,\tau)=P_{s_{\tau-1}}(1,\tau)$. Let $P_u(1,\tau)$ be the
$(1,\tau)$-chain starting at $u$ in $G-rs_1-us_1$, which is a subpath of $P_r(1,\tau)$. Then $P_u(1,\tau)$ ends at  either $r$ or $s_{\tau-1}$. Consequently if we shift from $s_\tau$ to $s_\alpha$, then $P_u(1,\tau)$ will end at  either $s_\tau$ or $s_{\tau-1}$. Thus we can do the following operations:
\[
\begin{bmatrix}
s_{\tau}:s_{\alpha} & P_{u}(1,\tau)  & us_1 & ux & ur\\
\text{shift} & 1/\tau & \tau \rightarrow \Delta & \Delta \rightarrow \alpha+1 & \alpha+1 \rightarrow 1
\end{bmatrix}.
\]

Denote the new coloring by $\varphi'$. Now $\pbar'(s_1)=\pbar'(r)=\tau$, we can color $rs_1$ by $\tau$ to obtain an edge $\Delta$-coloring of $G$, which contradicts the fact that $G$ is class 2.

\smallskip 
{\noindent \bf Subcase 1.2: $\tau \in [\beta+2,\Delta-1]$}.  
\smallskip

Let $w_1\in N_{\Delta-1}(r)$ such that $\varphi(rw_1)=\tau$. 
Since $us_1\in P_r(1,\tau, \varphi) $  and  the color on $us_{1}$ 
is not 1 under  every $(L,\varphi)$-stable coloring, by Claim~\ref{not1},  
for every  $(L,\varphi)$-stable $\varphi' \in \CC^{\Delta}(G-rs_1)$  obtained from $\varphi$ 
through a sequence of  Kempe $(1,*)$-changes not using $r$ or $x$ as endvertices  it holds that 
$\varphi'(us_1)=\varphi(us_1)=\tau$.   Therefore, 
$u\in P_r(1,\tau, \varphi')$  by Claim~\ref{not1} again.  (Note that every Kempe $(1,*)$-change $\varphi^*$ not using $r$ or $x$ as an endvertex is $(L,\varphi)$-stable. 
Since $r$ and $s_i$ are $(1,\delta)$-linked for every $\delta\in \pbar(s_i)$ with $i\in [1,\beta]$,  the Kempe $(1,*)$-chain not using $r$ or $x$ as an endvertex implies that it did not use any vertex from  $V(F)\cup \{x\}$ as an endvertex. Thus $\varphi^*(ru)=\varphi(ru)=\pbar^*(x)=\pbar(x)=\alpha+1$. 
By Lemma~\ref{Lemma:extended multifan}~\eqref{Evizingfan-a}, it further implies that 
$\varphi^*(ux)=\varphi(ux)$. Thus $\varphi^*$ is $(L,\varphi)$-stable.)
Applying  Lemma~\ref{Lemma:pseudo-fan0} on $L$ with $u$ playing the role of $w$, 
we find a sequence of distinct vertices $w_1, \ldots, w_t\in  \{s_{\beta+1}, \ldots, s_{\Delta-2} \}$ that forms either a stable rotation or a near stable rotation.  

 Assume first that $w_1, \ldots, w_t$ is a stable rotation.
In this case,  $t\ge 2$ and $r$ and $w_i$ are $(1,\pbar(w_i))$-linked for each $i\in[1,t]$. By Claim~\ref{not1},  $us_1\in P_r(1,\tau)=P_{w_t}(1,\tau)$. 
If $P_{w_t}(1,\tau)$ meets $u$ before $s_1$, then  
we do the following operations:
\[
\begin{bmatrix}
P_{[w_t,u]}(1,\tau)  &  us_1 & ux & ur\\
1/\tau & \tau \rightarrow \Delta & \Delta \rightarrow \alpha+1 & \alpha+1 \rightarrow 1
\end{bmatrix}.
\]
The new coloring is $(F,\varphi)$-stable, but $\alpha+1$ is missing at both $r$ and $s_\alpha$, giving a contradiction to Lemma~\ref{thm:vizing-fan1}~\eqref{thm:vizing-fan1a}.  {Thus $P_{r}(1,\tau)$ meets $u$ before $s_1$}.  Then shift from $w_1$
to $w_t$  gives back to the previous case with $w_1$ playing the role of $w_t$. 


Assume then that $w_1, \ldots, w_t$ is a near stable rotation, i.e., $\pbar(w_t)=\alpha+1$.
Note that $t$ could be 1 in this case. 
By Claim~\ref{not1},  $us_1\in P_r(1,\tau)=P_{z}(1,\tau)$, for some vertex
$z\in V(G)\setminus (V(F)\cup \{x,w_1, \ldots, w_t\})$. 
{Assume first that $w_t\ne x$}. 
{If $P_z(1,\tau)$ meets $u$ before $s_1$},  we do the following operations: 
\[
\begin{bmatrix}
P_{[z,u]}(1,\tau)  &  us_1 & ux & ur\\
1/\tau & \tau \rightarrow \Delta & \Delta \rightarrow \alpha+1 & \alpha+1 \rightarrow 1
\end{bmatrix}.
\]
The new coloring is $(F,\varphi)$-stable, but $\alpha+1$ is missing at both $r$ and $s_\alpha$, giving a contradiction to Lemma~\ref{thm:vizing-fan1}~\eqref{thm:vizing-fan1a}.
{If $P_z(1,\tau)$ meets $s_1$ before $u$},  we do the following operations:
\[
\begin{bmatrix}
P_{[z,s_1]}(1,\tau) & w_1:w_t  &  us_1 & ux & ur\\
1/\tau & \text{shift} & \tau \rightarrow \Delta & \Delta \rightarrow \alpha+1 & \alpha+1 \rightarrow \tau
\end{bmatrix}.
\]
The new coloring is $(F,\varphi)$-stable,  but $1$ is missing at both $r$ and $s_1$, giving a contradiction to Lemma~\ref{thm:vizing-fan1}~\eqref{thm:vizing-fan1a}.

%

{Assume now  that $w_t= x$}. 
We first consider the case when $t\ge 2$. Note that $P_r(1,\alpha+1)=P_{s_\alpha}(1,\alpha+1)$ and so $r\notin P_x(1,\alpha+1)$.  Let $\varphi_1=\varphi/P_x(1,\alpha+1)$. Then $P_r(1,\tau_{t},\varphi_1)=rx$, where $\tau_t:=\varphi(rw_t)$.  We next let  $\varphi_2=\varphi_1/P_{w_{t-1}}(1,\tau_t,\varphi_1)$. Then $\varphi_2$ is $(F,\varphi)$-stable and $\pbar_2(x)=\pbar_2(w_{t-1})=1$. Now doing a $(1,\alpha+1)$-swap at both $x$ and $w_{t-1}$ 
 gives back to the previous case when $\pbar(w_t)=\alpha+1$ and $w_t\ne x$
with $w_{t-1}$ in place of $w_t$.

Thus we assume that $t=1$. Let $\varphi_1=\varphi/P_x(1,\alpha+1)$. Then $P_r(1,\tau,\varphi_1)=rx$. We next let  $\varphi_2=\varphi_1/Q$,  where $Q$ is the $(1,\tau)$-chain containing $us_1$ under $\varphi_1$. Then $\varphi_2$ is $(F,\varphi)$-stable, but  $P_{s_1}(1,\Delta,\varphi_2)=s_1ux$ ends at $x$, giving a contradiction to Lemma~\ref{thm:vizing-fan1}~\eqref{thm:vizing-fan1b}.
%
	\smallskip 

{\noindent \bf Case 2: $u\sim s_\alpha$ and $\varphi(rs_\alpha)$ is not a $\Delta$-inducing color}.  
\smallskip

Let $\varphi(us_\alpha)=\tau$. 	Note that $\tau\ne \alpha, \alpha+1, \Delta$. By Lemma~\ref{thm:vizing-fan2}~\eqref{thm:vizing-fan2-a}, $P_{s_1}(\alpha+1,\Delta)=P_{s_\alpha}(\alpha+1,\Delta)$. Since $r\in P_x(\alpha+1,\Delta)$, we have $r\notin P_{s_1}(\alpha+1,\Delta)$. Now let $\varphi'=\varphi/P_{s_1}(\alpha+1,\Delta)$ and let $\varphi^*$ be obtained from $\varphi'$ by uncoloring $rs_\alpha$, shift from $s_2$ to $s_{\alpha-1}$ and coloring $rs_1$ by 2. Then $F^*=(r,rs_{\alpha},s_{\alpha},rs_{\alpha-1},s_{\alpha-1},\ldots,s_1,$ $rs_{\alpha+1},s_{\alpha+1},\ldots,s_\beta)$ is a  multifan centered at $r$ with respect to $rs_\alpha$ and $\varphi^*$, where $\pbar^*(s_\alpha)=\{\alpha,\Delta\}$, $\varphi^*(ru)=\pbar^*(x)=\alpha+1$ is the last $\alpha$-inducing color, $\varphi^*(ux)=\Delta$, and $u\sim s_\alpha$. 
Since the $\Delta$-sequence of $F^*$ agrees with that of $F$, $\tau$ is still not $\Delta$-inducing with respect to $F^*$ and $\varphi^*$.  Furthermore, we can assume $F^*$ is typical by renaming colors in $ [2,\alpha]$ and vertices in $ \{s_1,\ldots, s_\alpha\}$. Thus the current case is reduced 
to {Case 1},  finishing  the proof of Theorem~\ref{Lem:2-non-adj1*}.  \end{proof}

\subsection{Proof of Theorem~\ref{Lem:2-non-adj2}}

\begin{THM4}
	Let  $(G,rs_1,\varphi)$ be a coloring-triple, 
	$F:=F_\varphi(r,s_1:s_\alpha)$ be a typical 2-inducing  multifan, and  $L:=(F,ru,u,ux,x)$ be a lollipop centered at $r$.  If $\varphi(ru)=\alpha+1$, $\pbar(x)=\alpha+1$, 
	and $\varphi(ux)=\mu\in \pbar(F)$ is a 2-inducing color of $F$, 
	then  $u\not\sim s_{\mu-1}$ and $u\not\sim s_\mu$.  
\end{THM4}

\begin{proof}
	Assume to the contrary that $u\sim s_{\mu-1}$ or $u\sim s_\mu$. We consider two cases below. 
	
	\smallskip 
	
	{\noindent \bf Case 1: $u\sim s_{\mu-1}$}. 
	 	Let $\varphi(us_{\mu-1})=\tau$. 	Note that $\tau\ne \mu-1, \mu,  \alpha+1$. 
	\smallskip

	\begin{CLA}\label{not1_3}
		For every  $(L,\varphi)$-stable $\varphi^*\in \CC^\Delta(G-rs_1)$, it holds that	$\varphi^*(us_{\mu-1})\ne 1$.  Furthermore, if  
		$\varphi^*(us_{\mu-1})=\varphi(us_{\mu-1})=\tau$,  then  $us_{\mu-1}\in P_r(1,\tau, \varphi^*)$.  
	\end{CLA}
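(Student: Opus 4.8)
The plan is to imitate the proof of Claim~\ref{not1}, with $s_{\mu-1}$ playing the role of $s_1$ and the color $\mu$ on $ux$ playing the role of $\Delta$ there. Two facts will be used repeatedly. First, since the edges $ux$ and $ru$ share the vertex $u$ we have $\mu\ne\alpha+1$, so $\mu\in[2,\alpha]$ and hence $\mu\in\pbar(s_{\mu-1})$ from the typical structure of $F$. Second, for any $(F,\varphi)$-stable coloring $\psi$, $F$ is still a multifan under $\psi$, so by Lemma~\ref{thm:vizing-fan1}~\eqref{thm:vizing-fan1b} the vertex $r$ is $(1,\lambda)$-linked to $s_i$ whenever $\lambda\in\pbar(s_i)$; in particular $r,s_\alpha$ are $(1,\alpha+1)$-linked and $r,s_{\mu-1}$ are $(1,\mu)$-linked under $\psi$.

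I would argue by contradiction, treating both assertions of the claim at once: suppose that either $\varphi^*(us_{\mu-1})=1$, or $\varphi^*(us_{\mu-1})=\tau$ with $us_{\mu-1}\notin P_r(1,\tau,\varphi^*)$ (in the latter case $\tau\ne1$, since the first assertion, once established for every $(L,\varphi)$-stable coloring, gives $\varphi(us_{\mu-1})\ne1$). Set $\varphi'=\varphi^*$ in the former case and $\varphi'=\varphi^*/Q$ in the latter, where $Q$ is the $(1,\tau)$-chain containing $us_{\mu-1}$; as $r$ misses $1$, its $(1,\tau)$-chain is the path $P_r(1,\tau,\varphi^*)$, so $us_{\mu-1}\notin P_r(1,\tau,\varphi^*)$ forces $r\notin Q$. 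In both cases $\varphi'(us_{\mu-1})=1$. The first thing to verify is that $\varphi'$ is $(L,\varphi)$-stable: for $\varphi'=\varphi^*$ this is the hypothesis, while for $\varphi'=\varphi^*/Q$ one notes that the only edge of $L$ that $Q$ could carry is some $rs_i$ with $\tau=i$, but such a chain meets $r$ and so is not $Q$; that each vertex of $V(F)\cup\{x\}$ is either $(1,\tau)$-linked to $r$ (hence off $Q$) or has $\tau$ present and lies interior to its $(1,\tau)$-chain (so the swap does not alter its missing set); and that the colors $\varphi(ru)=\alpha+1$, $\varphi(ux)=\mu$ and the missing color $\pbar(x)=\alpha+1$ are untouched because $\{\alpha+1,\mu\}\cap\{1,\tau\}=\emptyset$.

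Next, since $u$ is a $\Delta$-vertex with $\varphi'(ru)=\alpha+1$ and $\varphi'(us_{\mu-1})=1$, it is an interior vertex of $P_r(1,\alpha+1,\varphi')$, and this chain equals $P_{s_\alpha}(1,\alpha+1,\varphi')$ by the linkage recalled above; since $x\notin\{r,s_\alpha\}$ and $\pbar'(x)=\alpha+1$, the chain $P_x(1,\alpha+1,\varphi')$ is a distinct $(1,\alpha+1)$-chain and avoids $r$. Put $\varphi''=\varphi'/P_x(1,\alpha+1,\varphi')$. This coloring is again $(F,\varphi)$-stable (its chain carries only the colors $1$ and $\alpha+1$, which color no edge of $F$, and no fan vertex is an endpoint of it), it satisfies $\pbar''(x)=1$, and it still has $\varphi''(us_{\mu-1})=1$ and $\varphi''(ux)=\mu$. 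But now the $(1,\mu)$-chain starting at $s_{\mu-1}$ reads $s_{\mu-1}\,u\,x$: it leaves $s_{\mu-1}$ along the color-$1$ edge $us_{\mu-1}$, continues from $u$ along the color-$\mu$ edge $ux$, and stops at $x$ because $1\in\pbar''(x)$. Hence $r\notin P_{s_{\mu-1}}(1,\mu,\varphi'')$, contradicting the fact that $r$ and $s_{\mu-1}$ are $(1,\mu)$-linked with respect to $\varphi''$ by Lemma~\ref{thm:vizing-fan1}~\eqref{thm:vizing-fan1b}; this contradiction proves both parts of the claim.

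The step I expect to be most delicate is the bookkeeping in the two middle paragraphs: confirming that every Kempe change carried out in passing from $\varphi^*$ through $\varphi'$ to $\varphi''$ preserves $(F,\varphi)$-stability (and, where it matters, $(L,\varphi)$-stability), which is precisely where the linkage part of Lemma~\ref{thm:vizing-fan1} is needed. Once those verifications are in place, the chain computation $s_{\mu-1}\,u\,x$ delivers the contradiction at once.
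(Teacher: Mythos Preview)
Your proof is correct and follows essentially the same route as the paper's: reduce both assertions to the situation $\varphi'(us_{\mu-1})=1$ via a Kempe change on $Q$ if necessary, perform the $(1,\alpha+1)$-swap at $x$ to obtain $\varphi''$ with $\pbar''(x)=1$, and then read off $P_{s_{\mu-1}}(1,\mu,\varphi'')=s_{\mu-1}ux$, contradicting Lemma~\ref{thm:vizing-fan1}\eqref{thm:vizing-fan1b}. The only difference is that you spell out the $(L,\varphi)$-stability bookkeeping where the paper simply asserts $(F,\varphi)$-stability and implicitly uses $\pbar'(x)=\alpha+1$ and $\varphi'(ux)=\mu$.
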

	\proof[Proof of Claim~\ref{not1_3}] 
	Suppose instead  that $\varphi^*(us_{\mu-1})=1$ for the first part,  
	and $us_{\mu-1}\notin P_r(1,\tau, \varphi^*)$  for the second part. 
	Let $\varphi'=\varphi^*$ in the former case and let $\varphi'=\varphi^*/Q$ in the latter case, where $Q$ is the $(1,\tau)$-chain containing $us_{\mu-1}$. Clearly $\varphi'$ is $(F,\varphi)$-stable. Since $P_r(1,\alpha+1,\varphi')= P_{s_\alpha}(1,\alpha+1,\varphi')$ by Lemma~\ref{thm:vizing-fan1}~\eqref{thm:vizing-fan1b}, $P_x(1,\alpha+1,\varphi')$ does not contain $r$. Thus $\varphi''=\varphi'/P_x(1,\alpha+1,\varphi')$ is $(F,\varphi^*)$-stable and $\varphi''(us_{\mu-1})=\varphi'(us_{\mu-1})=1$.  However  $P_{s_{\mu-1}}(1,\mu,\varphi'')=s_{\mu-1}ux$, contradicting Lemma~\ref{thm:vizing-fan1}~\eqref{thm:vizing-fan1b} that $s_{\mu-1}$ and $r$ are $(1,\mu)$-linked.
	\qed 
	
By Claim~\ref{not1_3}, we have either $\tau \in \pbar(F)\setminus \{1,\mu-1,\mu,\alpha+1\}$
or $\tau \in [\alpha+2,\Delta-1]$.  
	
	\smallskip 
{\noindent \bf \setword{Subcase 1.1}{Subcase 1.1}: $\tau\in \pbar(F)$}.  
\smallskip 

Assume first that $\tau \prec \mu$.  
	By Lemma~\ref{Lemma:extended multifan} \eqref{Evizingfan-e},  $r\in P_{s_\alpha}(\tau,\alpha+1)=P_{s_{\tau-1}}(\tau,\alpha+1)$.  
	Let $\varphi'=\varphi/P_x(\tau,\alpha+1)$. 
	Then $P_x(\tau,\mu,\varphi')=xus_{\mu-1}$. Swapping colors 
	along $P_x(\tau,\mu,\varphi')=xus_{\mu-1}$ to get a new coloring $\varphi''$. Then both $s_{\tau-1}$
	and $s_{\mu-1}$ miss $\tau$ with respect to $\varphi''$, giving a contradiction
	to Lemma~\ref{thm:vizing-fan1}~\eqref{thm:vizing-fan1a} that $V(F_{\varphi''}(r,s_1: s_{\mu-1}))$ is $\varphi''$-elementary.
	
	Assume then that  $\tau=\Delta$.   
	By Lemma~\ref{Lemma:extended multifan} \eqref{Evizingfan-d},  $r\in P_{s_\alpha}(\alpha+1,\Delta)=P_{s_1}(\alpha+1,\Delta)$.  
	Let $\varphi'=\varphi/P_x(\alpha+1,\Delta)$. 
	Then $P_x(\Delta,\mu,\varphi')=xus_{\mu-1}$. Swapping colors 
	along $P_x(\Delta,\mu,\varphi')=xus_{\mu-1}$ to get a new coloring $\varphi''$. Then  both $s_1$
	and $s_{\mu-1}$ miss $\Delta$ with respect to $\varphi''$, giving a contradiction
	to Lemma~\ref{thm:vizing-fan1}~\eqref{thm:vizing-fan1a} that $V(F_{\varphi''}(r,s_1: s_{\mu-1}))$ is $\varphi''$-elementary. 
	
	Assume lastly that $\mu \prec \tau \prec \alpha+1$.  
	By Claim~\ref{not1_3}, $us_{\mu-1}\in P_r(1,\tau)=P_{s_{\tau-1}}(1,\tau)$. Let $P_u(1,\tau)$ be the  $(1,\tau)$-chain starting at $u$ in $G-rs_1-us_{\mu-1}$, 
	which is a subpath of $P_r(1,\tau)$. Then $P_u(1,\tau)$ ends at  either $r$ or $s_{\tau-1}$. Consequently if we shift from $s_\tau$ to $s_\alpha$, then $P_u(1,\tau)$ will end at either $s_\tau$ or $s_{\tau-1}$. Thus we can do the following operations:
	\[
	\begin{bmatrix}
	s_{\tau}:s_{\alpha} & P_{u}(1,\tau)  & us_{\mu-1} & ux & ur\\
	\text{shift} & 1/\tau & \tau \rightarrow \mu & \mu \rightarrow \alpha+1 & \alpha+1 \rightarrow 1
	\end{bmatrix}.
	\]
	
	Denote the new coloring by $\varphi'$. Now $(r,rs_1,s_1,\ldots,s_{\mu-1})$ is a multifan, but $\pbar'(s_{\mu-1})=\pbar'(r)=\tau$, giving a contradiction to Lemma~\ref{thm:vizing-fan1}~\eqref{thm:vizing-fan1a}.
	
\smallskip 
	{ \bf\noindent Subcase 1.2:  $\tau\in [\alpha+2,\Delta-1]$}.  
	\smallskip 

	Let  $w_1\in N_{\Delta-1}(r)$ such that $\varphi(rw_1)=\tau$. 
	Since $us_{\mu-1}\in P_r(1,\tau, \varphi) $  and  the color on $us_{\mu-1}$ 
	is not 1 under  every $(L,\varphi)$-stable coloring by Claim~\ref{not1_3}, 
	for every  $(L,\varphi)$-stable $\varphi' \in \CC^{\Delta}(G-rs_1)$  obtained from $\varphi$ 
	through a sequence of  Kempe $(1,*)$-changes not using $r$ or $x$ as endvertices,  it holds that 
	$\varphi'(us_{\mu-1})=\varphi(us_{\mu-1})=\tau$.   Therefore, 
	$u\in P_r(1,\tau, \varphi')$  by  Claim~\ref{not1_3} again. 
	Applying  Lemma~\ref{Lemma:pseudo-fan0} on $L$ with $u$ playing the role of $w$, 
	there exists a sequence of distinct vertices $w_1, \ldots, w_t\in  \{s_{\alpha+1}, \ldots, s_{\Delta-2} \}$ that forms either a stable rotation or a near stable rotation. 

	\smallskip 
	{\noindent \bf \setword{Subcase 1.2.1}{Subcase 1.2.1}:  $w_1, \ldots, w_t$ form a near stable rotation, i.e., $\pbar(w_t)=\alpha+1$.}
	\smallskip 

	In this case, $t\ge 1$. By Claim~\ref{not1_3},  $us_{\mu-1}\in P_r(1,\tau)=P_{z}(1,\tau)$,   for some vertex
	$z\in V(G)\setminus (V(F)\cup \{x,w_1, \ldots, w_t\})$. Assume first that $w_t\ne x$. 
		{ If  $P_z(1,\tau)$ meets $u$ before $s_{\mu-1}$}, we do the following operations:
	\[
	\begin{bmatrix}
	P_{[z,u]}(1,\tau)  & us_{\mu-1} & ux & ur\\
	1/\tau & \tau \rightarrow \mu & \mu \rightarrow \alpha+1 & \alpha+1 \rightarrow 1
	\end{bmatrix}.
	\]
	Denote the new coloring by $\varphi'$. Now $(r,rs_1,s_1,\ldots,s_{\mu-1},rw_1,w_1,\ldots,w_t)$ is a multifan, but $\pbar'(w_t)=\pbar'(r)=\alpha+1$, giving a contradiction to Lemma~\ref{thm:vizing-fan1}~\eqref{thm:vizing-fan1a}.
	{If $P_z(1,\tau)$ meets $s_{\mu-1}$ before $u$}, we do the following operations:
	\[
	\begin{bmatrix}
	P_{[z,s_{\mu-1}]}(1,\tau) & w_1:w_t  & us_{\mu-1} & ux & ur\\
	1/\tau & \text{shift} & \tau \rightarrow \mu & \mu \rightarrow \alpha+1 & \alpha+1 \rightarrow \tau
	\end{bmatrix}.
	\]
	Denote the new coloring by $\varphi'$. Now $(r,rs_1,s_1,\ldots,s_{\mu-1})$ is a multifan, but $\pbar'(s_{\mu-1})=\pbar'(r)=1$, giving a contradiction to  Lemma~\ref{thm:vizing-fan1}~\eqref{thm:vizing-fan1a}.
	
	%
	
Assume now that $w_t= x$. We first consider the case when $t\ge 2$. Note that $P_r(1,\alpha+1)=P_{s_\alpha}(1,\alpha+1)$ and so $r\notin P_x(1,\alpha+1)$. Let  $\varphi_1=\varphi/P_x(1,\alpha+1)$. Then $P_r(1,\tau_{t},\varphi_1)=rx$, where $\tau_t:=\varphi(rw_t)$. We next let  $\varphi_2=\varphi_1/P_{w_{t-1}}(1,\tau_t,\varphi_1)$. Then $\varphi_2$ is $(F,\varphi)$-stable and $\pbar_2(x)=\pbar_2(w_{t-1})=1$. Now doing a $(1, \alpha+1)$-swap at both $x$ and $w_{t-1}$ 
gives back to the previous case when $\pbar(w_t)=\alpha+1$ and $w_t\ne x$
	with $w_{t-1}$ in place of $w_t$.
	
	Thus we assume that $t=1$. Let $\varphi_1=\varphi/P_x(1,\alpha+1)$. Then $P_r(1,\tau,\varphi_1)=rx$. We next let  $\varphi_2=\varphi_1/Q$, where $Q$ is the $(1,\tau)$-chain containing $us_{\mu-1}$ under $\varphi_1$. Then $\varphi_2$ is $(F,\varphi)$-stable, but  $P_{s_{\mu-1}}(1,\mu,\varphi_2)=s_{\mu-1}ux$ ends at $x$, giving a contradiction to Lemma~\ref{thm:vizing-fan1}~\eqref{thm:vizing-fan1b}.

	\smallskip 
	{\noindent  \bf Subcase 1.2.2: $w_1, \ldots, w_t$ form a stable rotation.}
		In this case,  $t\ge 2$.  By Claim~\ref{not1_3},  $us_{\mu-1}\in P_r(1,\tau)=P_{w_t}(1,\tau)$. 
		\smallskip 
		
	{Suppose first that $r\notin P_{s_\alpha}(\alpha+1,\tau)$.}	
	Let $\varphi'=\varphi/P_{s_\alpha}(\alpha+1,\tau)$. Note that $F'=(r,rs_1,s_1,\ldots,s_\alpha,rw_1,w_1,\ldots,w_t)$ is a multifan under $\varphi'$. If the other end of $P_{s_\alpha}(\alpha+1,\tau,\varphi)$ is not $w_t$, then $\pbar'(s_\alpha)=\pbar'(w_t)=\tau$, giving a contradiction Lemma~\ref{thm:vizing-fan1} \eqref{thm:vizing-fan1a} . If the other end of $P_{s_\alpha}(\alpha+1,\tau,\varphi)$ is $w_t$, then $\pbar'(w_t)=\varphi'(ru)=\pbar'(x)=\alpha+1$. Note that $\tau$ is in $\pbar'(F')$ now, and we are back to \ref{Subcase 1.1} when $\mu \prec \tau \prec \alpha+1$
	with $F'$ in the place of $F$ (so $L$ is understood to be defined with respect to $F'$ too).

	{Assume  now that $r\in P_{s_\alpha}(\alpha+1,\tau)$}. We consider the following three  cases. 
	
		\smallskip 
	{\noindent  \bf Subcase A: $s_\alpha$ and $w_t$ are $(\alpha+1,\tau)$-linked.} 
		\smallskip 
		
	Let $\varphi'=\varphi/P_x(\alpha+1,\tau)$. Then in the new coloring, $P_{s_{\mu-1}}(\mu,\tau,\varphi')=s_{\mu-1}ux$. Let $\varphi''=\varphi'/P_{s_{\mu-1}}(\mu,\tau,\varphi')$. Then $F^*=(r,rs_1, s_1,\dots, rs_{\mu-1}, s_{\mu-1}, rw_1, w_1, \ldots, rw_t, w_t)$ is a multifan with respect to $\varphi''$. 
	However, $\pbar''(s_{\mu-1})=\pbar''(w_t)=\tau$, showing a contradiction to Lemma~\ref{thm:vizing-fan1} \eqref{thm:vizing-fan1a} that  $V(F^*)$
	is $\varphi''$-elementary.
	
		\smallskip 
	{\noindent \bf \setword{Subcase B}{Subcase 1.2.2.3} : $s_\alpha$ and $w_t$ are $(\alpha+1,\tau)$-unlinked, but 
		$s_\alpha$ and $x$ are $(\alpha+1,\tau)$-linked.} 
		\smallskip 
	
	Since  $r\in P_{s_\alpha}(\alpha+1,\tau)$, 
	letting $\varphi'=\varphi/P_{w_t}(\alpha+1,\tau)$ reducing  the problem to \ref{Subcase 1.2.1}.

		\smallskip 
	{\noindent  \bf Subcase C: $s_\alpha$ is   $(\alpha+1,\tau)$-unlinked with neither $w_t$ nor $x$.}
		\smallskip 
	
	We may assume that $x$ and $w_t$ are $(\alpha+1, \tau)$-linked. For otherwise, 
	letting $\varphi'=\varphi/P_{w_t}(\alpha+1,\tau)$ gives back to \ref{Subcase 1.2.1} again.
		Recall that $r\in P_{s_\alpha}(\alpha+1,\tau)$. If $P_{s_\alpha}(\alpha+1, \tau)$ meets $w_1$ before $s_{\mu-1}$, we shift  from $w_1$ to $w_t$. This yields a new coloring $\varphi'$ such that 
	$r\not\in P_{s_\alpha}(\alpha+1,\tau,\varphi') $, and so we are back to the first subcase of Subcase 1.2.2 when $r\notin P_{s_\alpha}(\alpha+1,\tau,\varphi)$. If $P_{s_\alpha}(\alpha+1, \tau)$ meets $s_{\mu-1}$ before $w_1$, then shift  from $w_1$ to $w_t$ yields a new coloring $\varphi'$ such that
	$s_\alpha$ and $x$ are $(\alpha+1,\tau)$-linked with respect to $\varphi'$, which reduces the problem to 
	\ref{Subcase 1.2.2.3}.

		\smallskip 
{\noindent \bf Case 2: $u\sim s_\mu$}.  
	Let $\varphi(us_\mu)=\tau$. 	Note that $\tau\ne \mu, \mu+1, \alpha+1$. 
			\smallskip 
			
	\begin{CLA}\label{cla3.3}
		Either $\tau=\Delta$ or $\tau$ is a 2-inducing color with $\tau\prec \mu$.
	\end{CLA}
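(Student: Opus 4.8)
The plan is to eliminate every value of $\tau=\varphi(us_\mu)$ that is not of one of the two asserted forms. Since $F$ is a typical $2$-inducing multifan we have $\pbar(F)=\{1,2,\dots,\alpha+1,\Delta\}$, so once the values $\mu,\mu+1,\alpha+1$ already excluded in the hypotheses are set aside, the cases still to rule out are: $\tau=1$; $\tau$ a $2$-inducing colour with $\mu\prec\tau$ (i.e.\ $\tau\in[\mu+2,\alpha]$); and $\tau\in[\alpha+2,\Delta-1]$. Each will be treated by the argument used for the same value of $\tau$ in Case~1 of the proof of Theorem~\ref{Lem:2-non-adj2}, with $s_\mu$ taking the role of $s_{\mu-1}$ there.

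First I would prove the analogue of Claim~\ref{not1_3} for the edge $us_\mu$: for every $(L,\varphi)$-stable $\varphi^*\in\CC^\Delta(G-rs_1)$ one has $\varphi^*(us_\mu)\neq 1$, and if $\varphi^*(us_\mu)=\varphi(us_\mu)=\tau$ then $us_\mu\in P_r(1,\tau,\varphi^*)$. The proof copies that of Claim~\ref{not1_3}: if $us_\mu$ were coloured $1$ (after a $(1,\tau)$-Kempe change on its chain if necessary), then $us_\mu\in P_r(1,\alpha+1)=P_{s_\alpha}(1,\alpha+1)$ because $\varphi(ru)=\alpha+1$, so $r\notin P_x(1,\alpha+1)$; swapping $P_x(1,\alpha+1)$ yields an $(F,\varphi)$-stable colouring keeping $us_\mu$ coloured $1$ and $ux$ coloured $\mu$, under which $P_r(1,\mu)$ runs $r,\,rs_\mu,\,s_\mu,\,us_\mu,\,u,\,ux,\,x$ and ends at $x\notin\{s_1,\dots,s_\alpha\}$, contradicting Lemma~\ref{thm:vizing-fan1}~\eqref{thm:vizing-fan1b} (which gives that $r$ and $s_{\mu-1}$ are $(1,\mu)$-linked). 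In particular $\tau\neq 1$.

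Suppose next that $\tau$ is $2$-inducing with $\mu\prec\tau$; since $\tau\neq\alpha+1$ this forces $\mu\prec\tau\prec\alpha+1$, and Lemma~\ref{Lemma:extended multifan}~\eqref{Evizingfan-c} gives $r\in P_{s_1}(\tau,\Delta)=P_{s_{\tau-1}}(\tau,\Delta)$, while the preliminary claim gives $us_\mu\in P_r(1,\tau)=P_{s_{\tau-1}}(1,\tau)$. I would then mimic Subcase~\ref{Subcase 1.1} of Case~1: form $P_u(1,\tau)$ in $G-rs_1-us_\mu$, make it end at $s_\tau$ or $s_{\tau-1}$ via the shift $s_\tau{:}s_\alpha$, and carry out a recolouring cascade at $u$ (on $ru$, $ux$ and $us_\mu$) together with the $1/\tau$-swap along $P_u(1,\tau)$, so that under the resulting colouring $(r,rs_1,s_1,\dots,s_\mu)$ is a multifan two of whose vertices miss a common colour, contradicting Lemma~\ref{thm:vizing-fan1}~\eqref{thm:vizing-fan1a}. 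The point where this is not a mechanical transcription of Case~1 is that here $us_\mu$ is incident to the fan edge $rs_\mu$ (coloured $\mu$), so $us_\mu$ cannot be recoloured $\mu$ as it was in Case~1; the recolouring has to be routed so that $s_\mu$ ends up missing $\tau$ while $rs_2,\dots,rs_\mu$ stay coherently coloured, and this may in addition require splitting on whether $x$ lies on the $(\tau,\alpha+1)$-chain through $r$ and $u$, in the manner of Subcases~A--C of Case~1. I expect this to be the main obstacle.

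Finally, for $\tau\in[\alpha+2,\Delta-1]$ I would run the Subcase~1.2 argument. By the preliminary claim, $u\in P_r(1,\tau,\varphi')$ for every $(L,\varphi)$-stable $\varphi'$ obtained from $\varphi$ by a sequence of Kempe $(1,*)$-changes not using $r$ or $x$ as endvertices; hence Lemma~\ref{Lemma:pseudo-fan0}~\eqref{fan-and-x}, applied to $L$ with $u$ in the role of $w$, yields a sequence $w_1,\dots,w_t\in\{s_{\alpha+1},\dots,s_{\Delta-2}\}$ forming a stable rotation or a near-stable rotation. The remaining case analysis---stable versus near-stable rotation, whether $w_t=x$, and (when the rotation is stable) whether $r$ lies on the relevant $(\alpha+1,\tau)$-chain, together with the sub-subcases there---is carried out exactly as in Subcases~\ref{Subcase 1.2.1} and~1.2.2, each branch ending either with two vertices of a multifan missing the same colour or with a proper edge $\Delta$-colouring of $G$; both are impossible. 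Together the three cases give $\tau=\Delta$ or $\tau$ a $2$-inducing colour with $\tau\prec\mu$, as claimed.
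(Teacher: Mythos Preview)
Your plan is a genuine case-by-case re-run of Case~1 with $s_\mu$ in place of $s_{\mu-1}$, and you are right that this is \emph{not} a mechanical transcription: the recolouring step $us_{\mu-1}:\tau\to\mu$ in Case~1 worked because $\mu\in\pbar(s_{\mu-1})$, whereas $\mu\in\varphi(s_\mu)$ via $rs_\mu$, so the analogous move for $us_\mu$ is blocked. You flag this as ``the main obstacle'' but do not resolve it, and the same obstruction recurs throughout Subcase~1.2 (the operations in 1.2.1 and in Subcase~A of 1.2.2 all push $\mu$ onto the $u$--$s_{\mu-1}$ edge or exploit $\pbar(s_{\mu-1})=\mu$ via $P_{s_{\mu-1}}(\mu,\tau)=s_{\mu-1}ux$). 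So as written this is a plan with a gap, not a proof.

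The paper bypasses all of this with a single reduction. Uncolouring $rs_\mu$, shifting $s_2{:}s_{\mu-1}$, and colouring $rs_1$ by~$2$ produces a new typical multifan $F'$ with respect to $rs_\mu$ in which $s_\mu$ plays the role of $s_1$, the colour $\mu$ plays the role of $\Delta$ (so $\varphi'(ux)=\mu$ matches the hypothesis $\varphi(ux)=\Delta$ of Theorem~\ref{Lem:2-non-adj1*}), and the $\mu$-inducing colours of $F'$ are exactly $\{\Delta\}\cup\{2,\dots,\mu-1\}$. Since $u\sim s_\mu$, Theorem~\ref{Lem:2-non-adj1*}(1) applied to $F'$ forces $\tau=\varphi'(us_\mu)=\varphi(us_\mu)$ to be $\mu$-inducing in $F'$, which translates back to $\tau=\Delta$ or $\tau\prec\mu$ in $F$. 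This is a five-line argument; the recolouring trick is exactly what lets the already-proved Theorem~\ref{Lem:2-non-adj1*} absorb the asymmetry between $s_{\mu-1}$ and $s_\mu$ that is causing you trouble.
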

	\proof[Proof of Claim~\ref{cla3.3}] 
	Let $\varphi'$ be the coloring obtained from $\varphi$ by uncoloring $rs_{\mu}$, shift from $s_2$ to $s_{\mu-1}$ and coloring $rs_1$ by 2. Then $F'=(r,rs_{\mu},s_{\mu},rs_{\mu+1},s_{\mu+1},\ldots,s_{\alpha},rs_{\mu-1},s_{\mu-1},\ldots,s_1)$ is a multifan under $\varphi'$,  where  $\pbar'(s_{\mu})=\{\mu,\mu+1\}$, $\varphi'(ru)=\pbar'(x)=\alpha+1$ is the last $(\mu+1)$-inducing color,  $\varphi'(ux)=\mu$, and $u \sim s_{\mu}$. We can further assume that $F'$ is typical by remaining colors and vertices.  
	Thus by Theorem~\ref{Lem:2-non-adj1*} (1), $\tau$ is a $\mu$-inducing color with respect to $\varphi'$ and $F'$. So with respect to the original coloring $\varphi$ and $F$, we have either $\tau=\Delta$ or $\tau$ is a 2-inducing color with $\tau\prec \mu$. \qed
	
		\smallskip 
	{\noindent \bf Subcase 2.1: $\tau$ is a 2-inducing color with $\tau\prec \mu$}.  
		\smallskip 

	\smallskip 
	
	By Lemma~\ref{Lemma:extended multifan} \eqref{Evizingfan-e},  $r\in P_{s_\alpha}(\alpha+1,\tau)=P_{s_{\tau-1}}(\alpha+1,\tau)$.  
	Let $\varphi'=\varphi/P_x(\alpha+1,\tau)$. 
	Then $\pbar'(x)=\tau$. 
	It must  be still the case that $u\in P_r(1,\tau, \varphi')=P_{s_{\tau-1}}(1,\tau, \varphi')$. 
	For otherwise, swapping colors along $P_x(1,\tau,\varphi')$ and the $(1,\tau)$-chain containing $u$ (only swap once if the two chains are the same)
	gives a coloring $\varphi''$ such that $P_r(1,\mu,  \varphi'')=rs_\mu ux$, 
	showing a contradiction to Lemma~\ref{thm:vizing-fan1}~\eqref{thm:vizing-fan1b} 
	that $r$ and $s_{\mu-1}$ are $(\mu,1)$-linked with respect to $\varphi''$. 
	Let $\varphi^*=\varphi'/P_x(1,\tau,\varphi')$. Now $\pbar^*(x)=1$ and $u\in P_r(1,\tau,\varphi^*)$. 	
If $P_{s_{\tau-1}}(1,\tau, \varphi^*)$ meets $u$ before $s_{\mu}$, then we do the following operations:
	\[
	\begin{bmatrix}
	s_{\tau}: s_{\mu-1}  &  rs_\mu &s_\mu u & ux & P_{[s_{\tau-1}, u]}(1,\tau,\varphi^*) \\
	\text{shift} & \mu  \rightarrow \tau & \tau \rightarrow \mu & \mu\rightarrow 1 & 1/\tau
	\end{bmatrix}.
	\]
	Denote the new coloring by $\varphi''$. Now $(r,rs_1,s_1,\ldots,s_{\tau-1})$ is a multifan, but $\pbar''(s_{\tau-1})=\pbar''(r)=1$, giving a contradiction to  Lemma~\ref{thm:vizing-fan1}~\eqref{thm:vizing-fan1a}.
Thus $P_{s_{\tau-1}}(1,\tau, \varphi^*)$ meets $s_\mu$ before $u$.  
We do the following operations:
\[
\begin{bmatrix}
P_{[s_{\tau-1}, s_\mu]}(1,\tau,\varphi^*)  & us_\mu  & ur & rs_\mu  & s_2:s_{\mu-1}  & s_{\mu+1}:s_\alpha\\
1/\tau &\tau\rightarrow \mu & \alpha+1\rightarrow \tau & \mu \rightarrow \mu+1 &\text{shift} &\text{shift}
\end{bmatrix}.
\]

Based on the resulting coloring from above, we color $rs_1$ by 2 if $\tau\ne 2$ and by 1 if $\tau=2$, and uncolor $ux$.
Denote the new coloring by $\varphi''$. 
 Now $F^*=(u,ux, x, us_\mu, s_\mu )$ 
is a multifan with respect to $ux$ and $\varphi''$. 
However, the color 1 is missing at both $x$ and $s_\mu$,
showing a contradiction to Lemma~\ref{thm:vizing-fan1} \eqref{thm:vizing-fan1a}.

		\smallskip 
	{\noindent \bf  Subcase 2.2: $\tau=\Delta$}.  
		\smallskip 
		
	By Lemma~\ref{Lemma:extended multifan} \eqref{Evizingfan-d}, we know that $r\in P_{s_\alpha}(\alpha+1,\Delta)=P_{s_1}(\alpha+1,\Delta)$.  
	Let $\varphi'=\varphi/P_x(\alpha+1,\Delta)$. 
	Then $\pbar'(x)=\Delta$. 
	It must  be still the case that $u\in P_r(1,\Delta, \varphi')=P_{s_{1}}(1,\Delta, \varphi')$. 
	For otherwise, swapping colors along $P_x(1,\Delta,\varphi')$ and the $(1,\Delta)$-chain containing $u$ (only swap once if the two chains are the same) gives a coloring $\varphi''$ such that $P_r(1,\mu,  \varphi'')=rs_\mu ux$, showing a contradiction to Lemma~\ref{thm:vizing-fan1}~\eqref{thm:vizing-fan1b} 
	that $r$ and $s_{\mu-1}$ are $(1,\mu)$-linked with respect to $\varphi''$. 
	If $P_{s_1}(1,\Delta, \varphi')$ meets  $s_{\mu}$ before $u$, then  we do the following operations:
	\[
	\begin{bmatrix}
	P_{[s_1, s_\mu]}(1,\Delta,\varphi') & rs_{\mu} & s_{\mu}u &ux\\
	1/\Delta & \mu\rightarrow 1 & \Delta \rightarrow \mu & \mu\rightarrow \Delta
	\end{bmatrix}.
	\]
	Denote the new coloring by $\varphi''$. Now $(r,rs_1,s_1,\ldots,s_{\mu-1})$ is a multifan, but $\pbar''(s_{\mu-1})=\pbar''(r)=\mu$, giving a contradiction to Lemma~\ref{thm:vizing-fan1} \eqref{thm:vizing-fan1a}.

Thus $P_{s_1}(1,\Delta, \varphi')$  meets $u$ before $s_\mu$.  
We do the following operations:
\[
\begin{bmatrix}
P_{[s_1, u]}(1,\Delta,\varphi')  & us_\mu  & ur & rs_\mu  & s_2:s_{\mu-1}  & s_{\mu+1}:s_\alpha\\
1/\Delta &\Delta\rightarrow \mu & \alpha+1\rightarrow 1 & \mu \rightarrow \mu+1 &\text{shift} &\text{shift}
\end{bmatrix}.
\]
Based on the resulting coloring from above, we color $rs_1$ by 2 and uncolor $ux$.
Denote the new coloring by $\varphi''$. 
Now $F^*=(u,ux, x, us_\mu, s_\mu )$ 
is a multifan with respect to $ux$ and $\varphi''$. 
However,  the color $\Delta$ is missing at both $x$ and $s_\mu$,
showing a contradiction to Lemma~\ref{thm:vizing-fan1} \eqref{thm:vizing-fan1a}. 
The proof of Theorem~\ref{Lem:2-non-adj2} is now complete.
\end{proof}

 \section*{Acknowledgements}
Guantao Chen was supported by NSF grants DMS-1855716 and DMS-2154331; 
Guangming Jing was supported by NSF grant DMS-2246292; 
and Songling Shan was partially supported by 
NSF grant DMS-2345869. 

\begin{thebibliography}{10}
	
		\bibitem{2208.04179}
	Y.~Cao, G.~Chen, G.~Jing,  and S.~Shan.
	\newblock Overfullness of edge-critical graphs with small minimal core degree.
	\newblock {\em arXiv:2208.04179}, 2022. 
	

	
\bibitem{2108.04399}
		Y.~Cao, G.~Chen, G.~Jing,  and S.~Shan.
	\newblock The Core Conjecture of Hilton and Zhao II: a Proof. 
		\newblock {\em arXiv:2108.04399}, 2021. 
		


\bibitem{CCS22}
Y.~Cao, G.~Chen,  and S.~Shan.
\newblock Overfullness of critical class 2 graphs with a small core degree. 
\newblock {\em J. Comb. Theory. Ser. B},  156:145-173, 2022



	
	\bibitem{CariolaroC2003}
	D.~Cariolaro and G.~Cariolaro.
	\newblock Colouring the petals of a graph.
	\newblock {\em Electron. J. Combin.}, 10: \# R6, 2003.
	
	
	\bibitem{MR848854}
	A.~G. Chetwynd and A.~J.~W. Hilton.
	\newblock Star multigraphs with three vertices of maximum degree.
	\newblock {\em Math. Proc. Cambridge Philos. Soc.}, 100(2):303--317, 1986.
	
	\bibitem{CranstonR2018hilton}
	D.~W. Cranston and L.~Rabern.
	\newblock The {H}ilton--{Z}hao {C}onjecture is {T}rue for {G}raphs with
	{M}aximum {D}egree 4.
	\newblock {\em SIAM J. Discrete Math.}, 33(3):1228--1241, 2019.
	
	\bibitem{fw}
	S.~Fiorini and R.~J. Wilson.
	\newblock Edge-colourings of graphs. Research Notes in Mathematics, No. 16. 
	\newblock {\em Pitman, London; distributed by Fearon-Pitman Publishers, Inc., Belmont, Calif.,} 1977.
	
	
	\bibitem{Gupta-67}
	R.~G. Gupta.
	\newblock { Studies in the Theory of Graphs}.
	\newblock {\em PhD thesis}, Tata Institute of Fundamental Research, Bombay, 1967.
	
	\bibitem{MR1172373}
	A.~J.~W. Hilton and C. Zhao.
	\newblock The chromatic index of a graph whose core has maximum degree two.
	\newblock{\em Discrete Math.}, 
	101(1-3): 135--147,  1992.
	
	\bibitem{MR1395947}
	A.~J.~W. Hilton and C.~Zhao.
	\newblock On the edge-colouring of graphs whose core has maximum degree two.
	\newblock {\em J. Combin. Math. Combin. Comput.}, 21:97--108, 1996.
	
	\bibitem{Holyer}
	I.~Holyer.
	\newblock The {NP}-completeness of edge-coloring.
	\newblock {\em SIAM J. Comput.}, 10(4):718--720, 1981.
	
	\bibitem{seymour79}
	P.~D. Seymour.
	\newblock On multicolourings of cubic graphs, and conjectures of {F}ulkerson
	and {T}utte.
	\newblock {\em Proc. London Math. Soc. (3)}, 38(3):423--460, 1979.
	
	\bibitem{StiebSTF-Book}
	M.~Stiebitz, D.~Scheide, B.~Toft, and L.~M. Favrholdt.
	\newblock {Graph Edge Coloring: Vizing's Theorem and Goldberg's Conjecture.}
	\newblock {\em Wiley Series in Discrete Mathematics and Optimization}. John Wiley \&
	Sons, Inc., Hoboken, NJ, 2012.
	\newblock 
	
	\bibitem{Vizing-2-classes}
	V.~G. Vizing.
	\newblock Critical graphs with given chromatic class.
	\newblock {\em Diskret. Analiz No.}, 5:9--17, 1965.
	
\end{thebibliography}

\end{document}